\newtheorem{thm}{Theorem}[section]
\newtheorem{lem}[thm]{Lemma}
\newtheorem{prop}[thm]{Proposition}
\newtheorem{cor}[thm]{Corollary}
\newtheorem{conj}[thm]{Conjecture}
\newtheorem{defn}[thm]{Definition}
\newtheorem{ques}[thm]{Question}
\theoremstyle{definition}
\DeclareMathOperator{\Id}{e}
\DeclareMathOperator{\Aut}{Aut}
\DeclareMathOperator{\Out}{Out}
\DeclareMathOperator{\Inn}{Inn}
\DeclareMathOperator{\Homeo}{Homeo}
\DeclareMathOperator{\Homeop}{Homeo^+}
\DeclareMathOperator{\Hom}{Hom}
\DeclareMathOperator{\CO}{CO}
\DeclareMathOperator{\COmin}{CO_{min}}
\DeclareMathOperator{\COfin}{CO_{fin}}
\DeclareMathOperator{\COfintran}{CO_{fin,tran}}
\DeclareMathOperator{\COb}{CO_{blow}}
\DeclareMathOperator{\CObtran}{CO_{blow,tran}}
\DeclareMathOperator{\CObtranrot}{CO_{blow,tran,rot}}
\DeclareMathOperator{\COrot}{CO_{rot}}
\DeclareMathOperator{\LO}{LO}
\DeclareMathOperator{\LOtran}{LO_{tran}}
\DeclareMathOperator{\LOflag}{LO_{flag}}
\DeclareMathOperator{\BCO}{BCO}
\DeclareMathOperator{\Stab}{Stab}
\DeclareMathOperator{\MCG}{MCG}
\DeclareMathOperator{\rank}{rank}
\DeclareMathOperator{\im}{Im}
\newcommand{\ZZ}{\mathbb{Z}}      
\newcommand{\QQ}{\mathbb{Q}}   
\newcommand{\RR}{\mathbb{R}}
\newcommand{\TT}{\mathbb{T}}
\newcommand{\into}{\hookrightarrow}
\newcommand{\onto}{\twoheadrightarrow}
\newcommand{\M}{\{-1,0,1\}^{G^3}}
\newcommand{\ie}{\emph{i.e.\ }}
\newcommand{\calo}{\mathcal{O}}
\begin{document}

\title{Spaces of invariant circular orders of groups}

\author{Hyungryul Baik}
\address{Mathematisches Institut, 
Rheinische Friedrich-Wilhelms-Universit\"{a}t Bonn, 
Endenicher Allee 60,
53115 Bonn, Germany.}
\email{baik@math.uni-bonn.de}

\author{Eric Samperton}
\address{Department of Mathematics, UC Davis, 1 Shields Avenue, Davis, CA, 95616, USA.}
\email{egsamp@math.ucdavis.edu}

\begin{abstract}
Motivated by well known results in low-dimensional topology, we introduce and study a topology on the set $\CO(G)$ of all left-invariant circular orders on a fixed countable and discrete group $G$.  $\CO(G)$ contains as a closed subspace $\LO(G)$, the space of all left-invariant \emph{linear} orders of $G$, as first topologized by Sikora.  We use the compactness of these spaces to show the sets of non-linearly and non-circularly orderable finitely presented groups are recursively enumerable.  We describe the action of $\Aut(G)$ on $\CO(G)$ and relate it to results of Koberda regarding the action on $\LO(G)$.  We then study two families of circularly orderable groups: finitely generated abelian groups, and free products of circularly orderable groups.  For finitely generated abelian groups $A$, we use a classification of elements of $\CO(A)$ to describe the homeomorphism type of the space $\CO(A)$, and to show that $\Aut(A)$ acts faithfully on the subspace of circular orders which are not linear.  We define and characterize Archimedean circular orders, in analogy with linear Archimedean orders.  We describe explicit examples of circular orders on free products of circularly orderable groups, and prove a result about the abundance of orders on free products.  Whenever possible, we prove and interpret our results from a dynamical perspective.

\smallskip
\noindent \textbf{Keywords.} Finitely generated groups, circular orders, linear orders, homeomorphisms of the circle, abelian groups, free products.

\smallskip
\noindent \textbf{MSC classes:} 20F60, 37E10, 20F10, 20F65.
\end{abstract}

\maketitle

\section{Introduction}\label{sec:intro}
Group actions on the circle and circular orders are closely related by a well-known fact: a countable group $G$ acts faithfully on $S^1$ by orientation-preserving homeomorphisms if and only if $G$ admits a left-invariant circular order.  This fact, which we restate and prove in Proposition \ref{prop:faithfulactioniffCO} of \S \ref{ss:threeperspectives}, connects topological dynamics on $S^1$ to an algebraic property of groups called \emph{circular orderability}. Roughly speaking, a circular order on a group $G$ is a way to consistently assign either a clockwise or counterclockwise orientation to each ordered 3-tuple of elements of $G$, in such a way that orientations are invariant under left multiplication.

In general, deciding linear orderability or circular orderability of finitely-presented groups is a very difficult question.  In fact, it is provably impossible to decide, as these are both Markov properties. A property P of finitely presented groups is called \emph{Markov} if there exists a group without property P which can not be embedded in any group with P. Linear orderability is Markov, for example, because $\Homeop(\RR)$ is torsion free, and every linearly orderable group can be realized as a subgroup of $\Homeop(\RR)$.  Circular orderability is Markov because circular orders restrict to subgroups and the only finite groups which are circularly orderable are cyclic, see Proposition \ref{lem:torsionandCO}.  Markov properties are always undecidable \cite{Rabin58}.  However, determining \emph{non-orderability} (circular or linear) is a decision problem in the complexity class $\mathsf{RE}$; in other words, it is in principle possible to list all finitely-presented groups that are not orderable.  See Corollaries \ref{cor:CORE} and \ref{cor:LORE}, which prove these facts using a compactness argument.

Despite the impossibility of deciding circular orderability for arbitrary groups, it is a phenomenon that occurs frequently in low-dimensional topology.  For example, Fuchsian groups, braid and mapping class groups, and fundamental groups of 3-manifolds which support a taut foliation, are all circularly orderable \cite{Dehornoy94,CalegariDunfield03}.  However, it is not known if (circular) orderability is a decidable property for the class of 3-manifold groups.

By studying orderability abstractly, we hope, in further work, to better understand the geometry of 3-manifolds.  For example, the first author has elsewhere shown that Fuchsian groups can be characterized as those groups which admit faithful, orientation-preserving topological actions on $S^1$ with three invariant laminations with additional properties \cite{Baik15}.  It is conjectured that there should be a similar characterization of Kleinian groups (see \cite{AlonsoBaikSamperton15} for a precise statement, and \cite{Baik15} for more background).  Furthermore, \emph{linear} orderability has recently been the source of attention from Heegaard-Floer theorists due to the conjecture of \cite{BoyerGordonWatson13}, which contends that an irreducible rational homology 3-sphere is an L-space if and only if its fundamental group is \emph{not} linearly orderable. 

Finally, we remark that Sikora first introduced a topology on the space $\LO(G)$ of left-invariant linear orders of a countable group \cite{Sikora04}.  He used this topology to give a new proof---via a compactness argument---of the existence of universal Gr\"obner bases for ideals in polynomial rings.  The topology of $\LO(G)$ was further exploited to show that $\LO(G)$ is either finite or uncountable---in which case, it contains a Cantor set \cite{Navas10,Linnell11}.  An essential concept in both \cite{Navas10} and \cite{Linnell11} is that of a Conradian order.  We do not know of an appropriate analog of Conradian orders for circular orders.

The organization of this paper is as follows.  In  Section \ref{sec:setup}, we define ``circular order" and the topology on the set $\CO(G)$ of all left-invariant circular orders of $G$, and study basic properties and various subspaces of $\CO(G)$.  We also introduce the action of $\Aut(G)$ on $\CO(G)$.  A result of Koberda \cite{Koberda11} implies this action is faithful when $G$ is residually torsion-free nilpotent, but this raises the question of when $\Aut(G)$ acts faithfully on its collection of genuine circular orders---\ie circular orders that are not linear.  We show in Corollary \ref{cor:abeliangenuinefaithful} of \S\ref{ss:abeliantopology} that the action is faithful for $G$ a circularly-orderable, finitely-generated abelian group.

In Section \ref{sec:abeliangroups}, we determine $\CO(A)$ where $A$ is a finitely generated abelian group.  In \S\ref{ss:abelianclassification}, we use known results to classify circular orders on $A$.  In \S\ref{ss:abeliantopology}, we use this classification to construct convenient dense subsets of $\CO(A)$, which allows us to describe the topology on $\CO(A)$.  The proof that these subsets are dense uses Noetherian induction on the poset of finitely generated abelian groups with cyclic torsion subgroups, wherein two such groups are comparable if one surjects onto the other.  It is straightforward to see that if $A$ is finite cyclic, $\CO(A)$ is finite and discrete.  If $A$ does not have a cyclic torsion subgroup, then Lemma \ref{lem:torsionandCO} implies $\CO(A)$ is empty.  Finally, if $A$ has positive rank and a cyclic torsion subgroup, then the main result of the \S\ref{ss:abeliantopology}, Theorem \ref{thm:cantorset}, shows $\CO(A)$ is a Cantor set.  In \S\ref{ss:archimedean}, we define and characterize Archimedean circular orders.  We include this in Section \ref{sec:abeliangroups} because our characterization and a theorem of H\"{o}lder implies such orders are always abelian.

In Section \ref{sec:freeproducts}, we study free products of circularly orderable groups.  In \S\ref{ss:freeexistence}, we give two proofs of the existence of circular orders on a free product of circularly orderable groups.  One is straightforward but nonconstructive, using the Kurosh subgroup theorem; the other is an explicit and unique construction satisfying a kind of lexicographical condition.  See Theorems \ref{thm:free} and \ref{thm:freeproducts}, respectively.  In \S\ref{ss:freeabundance}, we discuss and analyze the abundance of orders on free products, following Rivas \cite{Rivas12}.  By adapting the argument Rivas uses to show that the space of linear orders on a free product of linearly orderable groups is a Cantor set, we show in Theorem \ref{thm:COminperfect} that for infinite circularly orderable groups $G$ and $H$, the set of circular orders on $G*H$ for which $G$ acts minimally has no isolated points in $\CO(G*H)$.  We conjecture that $\CO(G*H)$ is always a Cantor set if $G$ and $H$ are nontrivial circularly orderable groups, but, for example, it is unclear if the lexicographical orders of Theorem \ref{thm:freeproducts} can be isolated or not.

\subsection{Acknowledgments} 
We thank Dawid Kielak and Sanghyun Kim for helpful discussions especially regarding Section 4.2.  The second author thanks Universit\"{a}t Bonn for hosting him, during which time part of this work was carried out.  The first author was partially supported by the ERC Grant Nb. 10160104. 

%%%%%
%%%%%
\section{Topology on the spaces of circular orders of groups}\label{sec:setup}
\subsection{Three perspectives on circular orders}\label{ss:threeperspectives}
The following is a standard definition \cite{Calegari04} of a circular order of a set $G$. 

\begin{defn}
Let $G$ be a set with at least four elements. A circular order on $G$ is a choice of total order on $G \setminus \{p\}$ for every $p \in G$, such that if $<_p$ is the total order defined by $p$, and $p,q \in G$ are two distinct elements, the total orders $<_p, <_q$ differ by a cut on their common domain of definition. That is, for any $x, y$ distinct from $p, q$ , the order of $x$ and $y$ with respect to $<_p$ and $<_q$ is the same unless
$$x <_p q <_p y \text{ or } y <_p q <_p x,$$
in which case we have
$$y <_q p <_q x \text{ or } x <_q p <_q y,$$
respectively.  We also  say that the order $<_q$ on $G\setminus \{p,q\}$ is obtained from the order  $<_p$ on $S \setminus \{p\}$ by cutting at $q$.
\end{defn}

For example, every linear order $<$ on a set $G$ gives rise to a circular order.  For each $p \in G$, we define the cut $<_p$ by
$$<_p(x,y) = \begin{cases} <(y, x) & \text{ if } x < p < y,\ p < y < x \text{ or } y < x < p \\ 
<(x,y) & \text{ if } y < p < x,\ p < x < y \text{ or } x < y < p \end{cases}$$
Here we are conflating $<$ with the characteristic function $<: G^2 \to \{0,+1\}$ of the \emph{positive cone}: all pairs $(x,y)$ such that $x<y$.  Likewise for $<_p$.  It is easy to see that this satisfies the definition above.  Note that there is another identification of $<$ with the opposite circular order $<_p'(x,y):= <_p(y,x)$.  If the reader draws pictures, she will see why we chose $<_p$ as the preferred identification: the natural way to compactify a line to a circle is to make the orientations agree.

Now let $G$ be a countable discrete group.\footnote{Countability of $G$ is not a very strong restriction, cf.\ \cite{Calegari04}.} Then one can consider circular orders on $G$ as a set. However, since we have a group structure, the natural orders to consider are those preserved by the action of $G$ on itself by left multiplication, so that $x<_p y$ if and only if $gx <_{gp} gy$ for all $g$. We find it is hard to get our hands on this definition.  Fortunately, there are alternative ways to think about circular orders on groups. We will consider certain homogeneous cocycles on $G^3$. Define the degenerate set $\Delta_n(G)$ of $G^n$ (the product of $n$-copies of $G$) to be the set $\Delta_n(G) = \{ (g_1, \ldots, g_n) \in G^n \mid g_i = g_j \mbox{ for some } i \neq j \}$ of all $n$-tuples with some repeated elements.

\begin{defn}[Alternative definition]\label{defn:CO}
A \emph{circular order} on a set $G$ is a map $\phi : G^3 \to \{-1, 0, 1\}$ with the following properties:
\begin{itemize}
\item[(DV)] $\phi$ kills precisely the degenerate set, \ie
$$\phi^{-1}(0) = \Delta_3(G).$$

\item[(C)] $\phi$ is a 2-cocycle, \ie
$$\phi(g_1, g_2, g_3) - \phi(g_0, g_2, g_3)
+ \phi(g_0, g_1, g_3) - \phi(g_0, g_1, g_2) = 0$$
for all $g_1, g_2, g_3, g_4 \in G$. 
\end{itemize}
If $\phi(x,y,z) = +1$, we say $(x,y,z)$ is a \emph{positively oriented} triple.  Likewise, if $\phi(x,y,z) = -1$, we say $(x,y,z)$ is a \emph{negatively oriented} triple. If $\phi(x,y,z) = 0$, we say $(x,y,z)$ is a \emph{degenerate} triple.  Furthermore, if $G$ is a group, then a \emph{left-invariant circular order} on $G$ is a circular order on $G$ as set that also satisfies the homogeneity property:
\begin{itemize}
\item[(H)] $\phi$ is homogeneous, \ie
$$\phi(g_0, g_1, g_2) = \phi(gg_0, gg_1, gg_3)$$
for all $g\in G$ and $(g_0, g_1, g_2) \in G^3$.
\end{itemize}
\end{defn}

By abuse of language, we will often refer to a ``left-invariant circular order of a group" simply as a ``circular order."

\begin{lem}[Construction 2.3.4 of \cite{Calegari04}]\label{lem:definitionsagree}
The definition of a circular order as a homogeneous cocycle coincides with the definition of a circular order in terms of cuts.
\end{lem}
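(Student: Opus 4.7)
The plan is to construct explicit inverse bijections between cut systems $\{<_p\}_{p\in G}$ satisfying the first definition and functions $\phi\colon G^3\to\{-1,0,1\}$ satisfying (DV) and (C). Under this bijection, the left-invariance axiom (H) will correspond transparently to the relation $x<_p y \iff gx<_{gp} gy$, so I would focus on the ``set'' version of the correspondence. The forward map sends $\{<_p\}$ to $\phi$ defined by $\phi(x,y,z)=+1$ if $x,y,z$ are distinct and $y<_x z$, $\phi(x,y,z)=-1$ if they are distinct and $z<_x y$, and $\phi(x,y,z)=0$ on $\Delta_3(G)$. The backward map sends $\phi$ to the family $\{<_p\}$ with $x<_p y \iff \phi(p,x,y)=+1$.

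For the forward direction, (DV) is built into the construction, and (C) reduces to a case analysis. When the four inputs $g_0,g_1,g_2,g_3$ have repetitions, a simple bookkeeping using the alternating nature of the cut-based definition handles it. When the four inputs are distinct, I would fix the total order $<_{g_0}$ on $\{g_1,g_2,g_3\}$, which determines the three summands of the form $\phi(g_0,\cdot,\cdot)$ directly, and compute the fourth summand $\phi(g_1,g_2,g_3)$ by one application of the cut rule passing from $<_{g_0}$ to $<_{g_1}$. Since the cut flips the relative order of $g_2$ and $g_3$ in $<_{g_1}$ only when $g_1$ sits strictly between them in $<_{g_0}$, running through the six linear orderings of $\{g_1,g_2,g_3\}$ under $<_{g_0}$ confirms the identity in each case.

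For the backward direction, I would first squeeze the alternating structure of $\phi$ out of (C). Setting $g_3=g_0$ and using (DV) to kill two summands yields cyclic symmetry $\phi(a,b,c)=\phi(b,c,a)$, while setting $g_3=g_1$ yields antisymmetry $\phi(a,b,c)=-\phi(a,c,b)$. Totality of $<_p$ on $G\setminus\{p\}$ is then immediate from (DV) and antisymmetry. The crucial steps are transitivity and cut-compatibility, both of which exploit the $\{-1,0,1\}$-valuation of $\phi$. For transitivity, applying (C) to $(p,x,y,z)$ with $x<_p y<_p z$ rearranges to $\phi(p,x,z)+\phi(x,y,z)=2$, which in view of the image restriction forces both summands to equal $+1$ and gives $x<_p z$. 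For cut-compatibility, applying (C) to $(p,q,x,y)$ under the hypothesis $\phi(p,x,y)\neq \phi(q,x,y)$ similarly forces $\phi(p,q,x)$ and $\phi(p,q,y)$ to carry opposite nonzero signs, which is exactly the statement that $q$ separates $x$ from $y$ in the order $<_p$; using the alternating/cyclic identities, the corresponding statement about $p$ separating $y$ from $x$ in $<_q$ follows.

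That the two constructions are mutually inverse is essentially tautological once the maps are set up. The main obstacle, in my view, is not any single deep step but rather the recognition that the cocycle identity (C) together with the restricted image $\{-1,0,1\}$---as opposed to values in $\ZZ$---is precisely what encodes both the transitivity of each $<_p$ and the cut-compatibility between distinct $<_p,<_q$; the apparently purely algebraic condition (C) does a lot of combinatorial work in tandem with the image restriction. The longest but least subtle step is the forward-direction case analysis on four-tuples, which is pure bookkeeping.
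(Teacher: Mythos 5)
Your proposal is correct and follows essentially the same route as the paper: the paper's proof consists only of writing down the two mutually inverse assignments (a cut system $\{<_p\}$ gives the cocycle $\phi$, and conversely $x<_p y$ iff $\phi(p,x,y)=+1$, up to a cyclic relabeling of the arguments that matches your convention), leaving all verification to the reader. Your additional work checking (C), transitivity, and cut-compatibility from the $\{-1,0,1\}$-valuation is exactly the omitted bookkeeping and is sound.
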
 

\begin{proof}
Given a cocycle $\phi$ satisfying (DV), let $x<^\phi_p y$ if and only if $\phi(y,p,x) = +1$.

Conversely, given $\{<_p\}_{p\in G}$, define
$$\phi(x,y,z)=\begin{cases} +1 & \text{ if } z<_y x \\
-1 & \text{ if } x<_y z \\
0 & \text{ otherwise.}\end{cases}$$
\end{proof}

We observe two additional properties about circular orders:

\begin{itemize}
\item[(IC)] \emph{$\phi$ is invariant under cyclic permutations, \ie
$$\phi(g_0, g_1, g_2) = \phi(g_1, g_2, g_0)$$
for all $(g_0, g_1, g_2) \in G^3$.}

\item[(AT)] \emph{$\phi$ is antisymmetric with respect to transposing two arguments, e.g.\
$$\phi(g_0, g_1, g_2) = -\phi(g_0, g_2, g_1)$$
for all $(g_0, g_1, g_2) \in G^3$.}
\end{itemize}

\noindent To deduce (IC), let $g_3 = g_0$ in (C), so that (DV) implies (IC).  Property (AT) follows similarly by letting $g_3 = g_1$. Properties (IC) and (AT) are the properties that one intuitively expects to hold for circular orders, while (C) is roughly a compatibility condition.

Note that the identification of a linear order with a circular order given above can be converted to the cocycle picture by sending a linear order $<$ to the cocycle $c_<$ defined by
$$c_<(x,y,z) := \begin{cases}
+1 & \text{if } x < y < z,\ y < z < x, \text{ or } z < x < y \\
-1 & \text{if } x < z < y,\ y < x < z, \text{ or } y < x < z\\
0 & \text{otherwise.}
\end{cases}$$

Our subsection title promised three perspectives on circular orders.  The third has already been mentioned in the introduction.  We restate and sketch the proof of this well known result here, in order to introduce notation and terminology, as well as the basic idea used to establish later results.

\begin{prop}[Theorem 2.2.14 of \cite{Calegari04}]
\label{prop:faithfulactioniffCO}
A countable group $G$ acts faithfully on $S^1$ by orientation-preserving homeomorphisms if and only if $G$ admits a left-invariant circular order.
\end{prop}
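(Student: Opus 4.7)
The plan is to prove the two directions separately, in each case reducing to a standard construction built on the canonical cyclic order of $S^1$.

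For the ``only if'' direction, I would pull back the natural cyclic orientation cocycle $\phi_{S^1}:(S^1)^3\to\{-1,0,+1\}$ via an orbit map $g\mapsto g\cdot p$ for a carefully chosen $p\in S^1$. For any such $p$, the formula $\phi_p(g_0,g_1,g_2):=\phi_{S^1}(g_0 p,g_1 p,g_2 p)$ automatically satisfies the cocycle condition (C), since $\phi_{S^1}$ does, and the homogeneity condition (H), since $G$ acts by orientation-preserving homeomorphisms and hence preserves $\phi_{S^1}$. The only nontrivial requirement is (DV), which forces $g_0 p, g_1 p, g_2 p$ to be distinct whenever $g_0, g_1, g_2$ are, equivalently $\Stab(p)=\{\id\}$. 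Since $G$ is countable and acts faithfully, each $\Fix(g)$ with $g\neq\id$ is closed and a proper subset of $S^1$, so it is nowhere dense; a Baire category argument yields a point $p$ outside $\bigcup_{g\neq\id}\Fix(g)$, and $\phi_p$ is then the desired left-invariant circular order.

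For the ``if'' direction, the plan is a dynamical realization of $(G,\phi)$ as a subgroup of $\Homeop(S^1)$. Enumerating $G=\{h_0,h_1,h_2,\ldots\}$, I inductively define an injection $\iota:G\hookrightarrow S^1$ whose image realizes $\phi$: after placing $\iota(h_0),\ldots,\iota(h_n)$, the cocycle $\phi$ specifies a unique open complementary arc of $S^1\setminus\{\iota(h_0),\ldots,\iota(h_n)\}$ in which $\iota(h_{n+1})$ must lie, and I place it there arbitrarily. By homogeneity (H), left multiplication $L_g:G\to G$ preserves $\phi$, so $\iota\circ L_g\circ\iota^{-1}$ is an orientation-preserving bijection of $\iota(G)$, which extends by continuity to the closure $\overline{\iota(G)}\subset S^1$. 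To extend across the complementary open arcs (``gaps''), I would choose a $G$-equivariant system of parametrizations: fix a transversal to the $G$-action on the set of gaps, fix a homeomorphism of each representative gap onto $(0,1)$, and transport these via the $G$-action. Defining each $\tilde g$ on every gap to be ``the identity in these coordinates'' produces a well-defined homomorphism $g\mapsto\tilde g$ from $G$ into $\Homeop(S^1)$. Faithfulness is immediate: if $\tilde g=\id_{S^1}$, then $\iota(g)=\tilde g(\iota(\id))=\iota(\id)$, whence $g=\id$ by injectivity of $\iota$.

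The main obstacle is the coherent extension across gaps in the backward direction. The inductive embedding and the continuous extension to $\overline{\iota(G)}$ are direct consequences of the axioms (DV), (C), (H), but one must extend each $\tilde g$ across the complementary arcs in a way that respects composition in $G$ simultaneously for all $g$; without this coherence one only obtains order-preserving bijections of $\iota(G)$, not a genuine homomorphism into $\Homeop(S^1)$. The equivariant parametrization strategy sketched above is, to my mind, the real content of the dynamical realization.
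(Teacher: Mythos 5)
Your ``if'' direction is essentially the paper's dynamical realization: an inductive order-preserving embedding $\iota\colon G\into S^1$ dictated by the cocycle, extension of left multiplication first to $\overline{\iota(G)}$ and then across the complementary gaps, with faithfulness read off from the marked point $\iota(\id)$. Your equivariant-transversal treatment of the gaps is, if anything, more careful than the paper's ``act linearly on the gaps,'' and both are fine.

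The ``only if'' direction has a genuine gap. The inference ``$\Fix(g)$ is closed and a proper subset of $S^1$, hence nowhere dense'' is false for circle homeomorphisms: a nontrivial orientation-preserving homeomorphism may fix an entire closed arc, so $\Fix(g)$ can have nonempty interior. Worse, the conclusion you need --- a point with trivial stabilizer --- can simply fail to exist for a faithful action. Take nontrivial $f,h\in\Homeop(S^1)$ with disjoint closed supports $A$ and $B$; then $\langle f,h\rangle\cong\ZZ^2$ acts faithfully, but every $p\in S^1$ misses at least one of $A,B$ and is therefore fixed by $f$ or by $h$. So no Baire-type argument can produce the free point that your pullback $\phi_p$ requires, and condition (DV) fails for every choice of $p$ in such examples. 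The paper anticipates exactly this: it gives the pullback construction only under the extra hypothesis that a trivially stabilized point exists, and supplies a second construction for the general case (ordering a triple $(a,b,c)$ by the first element $x_m$ of a fixed countable dense subset at which the three orbit points are pairwise distinct). A robust repair, in the spirit of what the paper develops later, is: fix any $p\in S^1$; the stabilizer $\Stab_G(p)$ acts faithfully on $S^1\setminus\{p\}\cong\RR$ and is therefore left-orderable by the linear analogue (Proposition \ref{prop:faithfulactioniffLO}), while the orbit of $p$ carries a $G$-invariant circular order inherited from $S^1$; Lemma \ref{lem:CO_Gset} (equivalently Proposition \ref{prop:nonfaithfulaction}) then combines these into a left-invariant circular order on $G$. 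Some input of this kind is unavoidable; as written, your argument only proves the special case in which a freely-acted point exists.
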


\begin{proof}
We will consider $S^1$ as the unit interval with $0 = 1$, equipped with the standard positive circular order as a set.

Let $c$ be a circular order on $G$.  Enumerate $G$. We first construct a set map $i: G\into S^1$.  Send $g_0$ to $0$ and $g_1$ to $\frac{1}{2}$.  Send $g_2$ to wherever $c$ tells you to send $g_2$ and continue.  More precisely, let $i(g_2) = \frac{3}{4}$ if $c(g_0,g_1,g_2) = +1$; otherwise, let $i(g_2) = \frac{1}{4}.$ Continue with this construction in the obvious way to define $i$ on all of $G$.

Since $G$ acts on itself bijectively by left multiplication, we get an action of $G$ on $i(G)$.  By the way we used the circular order to build $i$, this action preserves the order information of $i(G)$.  Now extend this action to an action $r_c:G\into \Homeop(S^1)$ on all of $S^1$ by, for example, making $r_c(g)$ act linearly on the interval gaps between adjacent elements in $i(G)$.  We call $r_c$ the \emph{dynamical realization} of $c$ (as an analogue of dynamical realization of left-invariant linear orders in \cite{Navas10}).  If $e$ is the identity element of $G$, observe that the point $i(e) \in S^1$ has trivial stabilizer.

For the converse direction, we give two constructions.  First, suppose there exists a point $p \in S^1$ that is trivially stabilized by the $G$ action $\phi$. Define
\begin{equation}\label{eqn:blergh}
c_\phi(x,y,z) := \begin{cases}
-1 & \text{ if } (\phi(x)(p),\phi(y)(p),\phi(z)(p)) \text{ is negatively oriented in } S^1,\\
+1 & \text{ if } (\phi(x)(p),\phi(y)(p),\phi(z)(p)) \text{ is positively oriented in } S^1,\\
0 & \text{otherwise.}\end{cases}
\end{equation}

For the second construction, we only need to use faithfullness of $\phi$ and the fact that $S^1$ is separable.  Enumerate some countable dense subset $Q=\{x_1,x_2,\dots\}$ of $S^1$.  Given a triple $(a,b,c)$ of distinct elements of $G$, let
$$m = m(a,b,c) := \min \{i \mid a \cdot x_i \neq b \cdot x_i \neq c \cdot x_i \neq a \cdot x_i \}.$$
The minimum exists because $Q$ is dense and $\phi$ is faithful.  Now set
$$c(a,b,c) := c_{S^1}(a \cdot x_m, b \cdot x_m,c \cdot x_m).$$
We leave it to the reader to check that $c$ is a circular order on $G$.
\end{proof}

We remark that there are completely analogous constructions for left-invariant \emph{linear} orders, which fact we record here as

\begin{prop}
\label{prop:faithfulactioniffLO}
A countable group $G$ admits a faithful action on $\RR$ by orientation-preserving homeomorphisms if and only if $G$ admits a left-invariant linear order. \hfill\qed
\end{prop}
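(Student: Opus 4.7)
The plan is to mirror the proof of Proposition \ref{prop:faithfulactioniffCO} almost verbatim, substituting $\RR$ for $S^1$ and linear orders for circular orders throughout. The key observation is that nearly every step in the circle case localizes to an interval on which it becomes a statement about the line, so the adaptation is mostly bookkeeping.

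For the forward direction, given a left-invariant linear order $<$ on $G$, enumerate $G = \{g_0, g_1, \dots\}$ and inductively build an order-preserving injection $i : G \hookrightarrow \RR$: send $g_0 \mapsto 0$, and at each subsequent stage place $g_n$ at a dyadic rational that sits relative to $\{i(g_0),\dots,i(g_{n-1})\}$ exactly as $g_n$ sits relative to $\{g_0,\dots,g_{n-1}\}$ under $<$. Some care is needed to ensure $i(G)$ is cofinal in both directions of $\RR$ (for instance, by forcing the first new element on each side at stage $n$ to land at a specified large-modulus dyadic). Left multiplication then acts on $i(G)$ by order-preserving bijections, and we extend each such bijection to a homeomorphism of $\RR$ by linear interpolation on the gaps between consecutive images; because the action is determined on the dense subset $i(G)$, this extension is functorial, giving a homomorphism $r_<: G \to \Homeop(\RR)$. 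Faithfulness is immediate from the fact that $i(\id)$ has trivial stabilizer.

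For the reverse direction, suppose $\phi: G \to \Homeop(\RR)$ is faithful. As in the circular case, we give two constructions. First, if there is some $p \in \RR$ with trivial $\phi$-stabilizer, simply declare
$$a <_\phi b \iff \phi(a)(p) < \phi(b)(p);$$
this is a left-invariant linear order because the standard order on $\RR$ is invariant under orientation-preserving homeomorphisms and the orbit map $g \mapsto \phi(g)(p)$ is injective. Second, without assuming a point with trivial stabilizer, enumerate a countable dense set $Q = \{x_1,x_2,\dots\} \subset \RR$ and, for any distinct $a,b \in G$, set $m(a,b) := \min\{i \mid a \cdot x_i \neq b \cdot x_i\}$, which exists by density of $Q$ and faithfulness of $\phi$; then declare $a < b$ if and only if $a \cdot x_{m(a,b)} < b \cdot x_{m(a,b)}$. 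Transitivity and left-invariance are checked exactly as in the proof of Proposition \ref{prop:faithfulactioniffCO}.

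The only place the argument genuinely diverges from the circular case is the extension step in the forward direction: unlike $S^1$, the line $\RR$ is not compact, so one must actively arrange that $i(G)$ has no finite upper or lower bound before linear interpolation can extend every left multiplication to a homeomorphism of all of $\RR$. This is the main (mild) obstacle, and it is handled by the cofinality tweak mentioned above; everything else transfers directly.
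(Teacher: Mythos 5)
Your proof is correct and is exactly what the paper intends: the paper offers no argument for this proposition beyond the remark that the constructions are ``completely analogous'' to those in Proposition \ref{prop:faithfulactioniffCO}, and your write-up carries out that analogy faithfully, including both converse constructions (trivially stabilized point, and the $\min$-over-a-dense-set order). You also correctly isolate the one genuine wrinkle---arranging that $i(G)$ is unbounded in both directions of $\RR$ before interpolating---which the circle case sidesteps by compactness.
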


This too appears in Theorem 2.2.14 of \cite{Calegari04}, although it is possible this result was known earlier.

These last two propositions imply a third way to construct a circular order from a linear order: given a linear order we can build an action on $\RR$.  Compactify $\RR$ to $S^1$ and extend the action so that $\infty$ is a fixed point. Proposition~\ref{prop:faithfulactioniffCO} now provides a circular order.  The reader can check that this construction of a circular order from a linear order is equivalent to the previous two constructions, the first involving cuts and the second cocycles.

In the proof of Proposition \ref{prop:faithfulactioniffCO}, we described the construction of an injective homomorphism $r_c: G \to \Homeop(S^1)$ called the \emph{dynamical realization} of a circular order $c$. $r_c$ is almost well-defined; the action of $G$ on $\overline{i(G)}$ is completely well-defined but the action on the interval gaps is well-defined up to semi-conjugacy. Since the stabilizer of each gap acts on the open interval freely, by H\"older's theorem (Theorem 6.10 of \cite{Ghys01}), this action is always semi-conjugate to a group of translations. 

Another remark on the dynamical realization is that $r_c$ is not just a group homomorphism from $G$ to $\Homeop(S^1)$, but rather a group homomorphism with a marked point $p$. In the construction, $p$ corresponds to $i(e)$ where $e$ is the identity element. One can recover $c$ from $r_c$ with $p$ by by declaring a distinct triple of elements $(g_1, g_2, g_3)$ to be positively oriented if and only if $(g_1p, g_2p, g_3p)$ is positively oriented with respect to the natural circular order on $S^1$. On the other hand, when $G$ acts on $S^1$ with two trivially stabilized points $p, q$, the circular order on the orbit of $p$ and one on the orbit of $q$ might give two different circular orders on $G$. The choice of $p$ matters less for bi-invariant circular orders. Here, a \emph{bi-invariant} circular order of $G$ is a left-invariant circular order that is also right-invariant.  In other words, the cocycle describing the order is also homogeneous on the right.

\begin{prop}\label{prop:well_defined_biinvariant_co}
Given a circular order $c$ of $G$, let $r_c$ be the dynamical realization with a marked point $p$. $c$ is bi-invariant if and only if one can choose any point in the orbit of $p$ under $r_c(G)$ as a new marked point. 
\end{prop}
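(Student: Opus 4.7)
The plan is to exploit the recovery formula
$$c(g_1, g_2, g_3) = c_{S^1}\bigl(r_c(g_1) p,\, r_c(g_2) p,\, r_c(g_3) p\bigr)$$
for distinct triples (both sides vanish on degenerate triples by (DV), so they can be ignored throughout). The first step is to record what happens when the marked point is moved along the orbit $r_c(G) p$. For any $h \in G$, setting $q := r_c(h) p$ and using that $r_c$ is a homomorphism, I get $r_c(g) q = r_c(g h) p$. Hence the circular order $c_q$ obtained by using $q$ in the recovery formula satisfies
$$c_q(g_1, g_2, g_3) = c_{S^1}\bigl(r_c(g_1 h) p,\, r_c(g_2 h) p,\, r_c(g_3 h) p\bigr) = c(g_1 h, g_2 h, g_3 h).$$
This single identity is really the whole content of the proposition.

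With this identity in hand, both implications follow at once. For the forward direction, bi-invariance of $c$ makes the right side equal to $c(g_1, g_2, g_3)$ for every $h$, so $c_q = c$ for every $q$ in the orbit of $p$; that is, every such $q$ is a valid marked point. For the converse, if every $q$ in the orbit of $p$ recovers $c$, then for each fixed $h \in G$ the identity specializes to $c(g_1, g_2, g_3) = c(g_1 h, g_2 h, g_3 h)$ for all distinct triples; since both sides vanish on degenerate triples, the equality extends to all of $G^3$, giving right-invariance. Combined with the built-in left-invariance, $c$ is bi-invariant.

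The only subtlety I anticipate is the well-definedness disclaimer made just before the proposition: $r_c$ is a priori only determined up to semi-conjugacy on the gaps of $\overline{r_c(G) p}$. I expect this to be harmless, since the recovery formula reads only the cyclic order of finitely many points in the orbit $r_c(G) p$ itself, and this orbit, together with its cyclic order, is canonically pinned down by $c$ via the construction of $i$ in the proof of Proposition \ref{prop:faithfulactioniffCO}. So the whole argument is just the one-line computation above, packaged twice.
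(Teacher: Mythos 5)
Your proof is correct and takes essentially the same approach as the paper: the paper's (two-sentence) proof likewise observes that bi-invariance of $c$ is exactly the statement that $(g_1 dp, g_2 dp, g_3 dp)$ is oriented the same way as $(g_1 p, g_2 p, g_3 p)$ for all $d \in G$, which is precisely the condition that $dp$ serves as a marked point recovering $c$. Your closing remark about the semi-conjugacy ambiguity being harmless is a correct elaboration of a point the paper leaves implicit.
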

\begin{proof}
This is straightforward. $c$ being bi-invariant means that $(ap, bp, cp)$ is oriented in the same way as $(adp, bdp, cdp)$ for any $a, b, c, d \in G$. But this is equivalent to saying that $dp$ can be used as a marked point to recover $c$ from $r_c$. 
\end{proof}

This means that the set of bi-invariant circular orders is essentially in one-to-one correspondence with the set of orbit classes of trivially stabilized points under its dynamical realizations.

%%%%%
\subsection{The space of circular orders}\label{ss:CO}
We now describe a topology on the set of all circular orders of a fixed group.

\begin{defn}
Given a group $G$, $\CO(G)$ denotes the set of all left-invariant circular orders on $G$.
\end{defn}

We defined a circular order as a map from $G^3$ into $\{-1, 0, 1\}$ satisfying some axioms. We topologize $\CO(G)$ as a subspace of the space $\{-1, 0, 1\}^{G^3}$ of all maps from $G^3$ to $\{-1, 0, 1\}$, where $\{-1, 0, 1\}^{G^3}$ is equipped with the Tychonoff topology induced from the discrete topology on $\{-1, 0, 1\}$. If $T \in G^3\setminus\Delta(G)$, we define
$$B_T:= \{c \in \CO(G) \mid c(T) = +1\}.$$
Similarly, given $T_1,\dots,T_n \in G^3\setminus\Delta(G)$, we let
$$B_{T_1,\dots,T_n} = \bigcap_{i=1}^n B_{T_i}.$$

\begin{lem}\label{lem:COsubbasis}
The collection $\{B_T\}_{T \in G^3\setminus\Delta(G)}$ forms a subbasis for the topology on $\CO(G)$.
\end{lem}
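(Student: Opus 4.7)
The plan is to start from the fact that the Tychonoff topology on $\{-1,0,1\}^{G^3}$ is generated by the subbasis of coordinate cylinders $C_{T,a}:=\pi_T^{-1}(\{a\})$ for $T\in G^3$ and $a\in\{-1,0,1\}$, and then to show that the intersections $C_{T,a}\cap\CO(G)$ can all be expressed in terms of the sets $B_T$ (plus the trivially open sets $\emptyset$ and $\CO(G)$). By definition of the subspace topology, this proves the claim.

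The main step is to do a short case analysis on $T$ and $a$. If $T\in\Delta_3(G)$, then property (DV) forces $c(T)=0$ for every $c\in\CO(G)$; hence $C_{T,0}\cap\CO(G)=\CO(G)$ and $C_{T,\pm1}\cap\CO(G)=\emptyset$, which are open in any topology on $\CO(G)$ and do not need to appear in the subbasis. If $T\in G^3\setminus\Delta_3(G)$, then (DV) again forces $c(T)\in\{-1,+1\}$, so $C_{T,0}\cap\CO(G)=\emptyset$. The remaining case $a=+1$ is exactly $C_{T,+1}\cap\CO(G)=B_T$ by definition.

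The only slightly nontrivial case is $a=-1$ with $T=(g_0,g_1,g_2)\notin\Delta_3(G)$. Here I would use the antisymmetry property (AT), derived earlier in the paper from (DV) and (C), which gives $\phi(g_0,g_1,g_2)=-\phi(g_0,g_2,g_1)$ for every $\phi\in\CO(G)$. Setting $T'=(g_0,g_2,g_1)$ (which still lies in $G^3\setminus\Delta_3(G)$), we get $C_{T,-1}\cap\CO(G)=B_{T'}$. Thus every generating subbasic open set of $\CO(G)$ inherited from $\{-1,0,1\}^{G^3}$ is either trivially open or of the form $B_{T''}$ for some $T''\in G^3\setminus\Delta_3(G)$.

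I expect no genuine obstacle here; the argument is essentially a bookkeeping exercise, and the only thing worth being careful about is remembering to invoke (AT) to handle the $c(T)=-1$ cylinders, so that one does not need a separate family of ``negative'' subbasis sets. Once the case analysis is complete, one concludes by the standard fact that restricting a subbasis of a space to a subspace yields a subbasis of the subspace topology.
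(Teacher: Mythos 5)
Your proposal is correct and follows essentially the same route as the paper: restrict the coordinate-cylinder subbasis of $\{-1,0,1\}^{G^3}$ to $\CO(G)$, use (DV) to discard the $a=0$ cylinders as trivially $\CO(G)$ or $\emptyset$, and use antisymmetry under a transposition to rewrite the $a=-1$ cylinders as sets of the form $B_{T'}$. The only cosmetic difference is that you explicitly cite property (AT), whereas the paper states the identity $B_{T,-1}\cap\CO(G)=B_{\tau\cdot T,+1}\cap\CO(G)$ without naming it.
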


\begin{proof}
Recall that the set of triples $\{-1, 0, 1\}^{G^3}$ is a Cantor set with a subbasis given by sets of the form
$$B_{T,\epsilon} = \{f: G^3 \to \{-1,0,1\} \mid f(T) = \epsilon\}$$
where $T \in G^3$ and $\epsilon \in \{-1,0,1\}$. %Similar to before, given $(T_1,\epsilon_1),\dots,(T_n,\epsilon_n) \in G^3 \times \{-1,0,1\}$ we let
%$$B_{(T_1,\epsilon_1),\dots,(T_n,\epsilon_n)} = \bigcap_{i=1}^n B_{T_i,\epsilon_i}.$$
%Thus the $B_{(T_1,\epsilon_1),\dots,(T_n,\epsilon_n)}$ form a basis.
So a subbasis of $\CO(G)$ is given by sets of the form
$$B_{T,\epsilon} \cap \CO(G).$$
Note that if $\epsilon=+1$,
$$B_{T,+1} \cap \CO(G) = B_T.$$
So we should argue that we can throw out all sets of the form $B_{T,-1} \cap \CO(G)$ and $B_{T,0} \cap \CO(G)$, yet still have a subbasis.

Our subbasis does not need sets of the form $B_{T,0} \cap \CO(G)$ because
$$\CO(G) \cap B_{T,0} = \CO(G)$$
for all $T\in \Delta(G)$, and
$$\CO(G) \cap B_{T,0} = \emptyset$$
for all $T \in G^3 \setminus \Delta(G)$.

The subbasis does not needs sets of the form $B_{T,-1} \cap \CO(G)$ because
$$B_{T,-1} \cap \CO(G) = B_{\tau \cdot T, +1} \cap \CO(G)$$
where $\tau$ is any transposition.
\end{proof}

We first want to understand how $\CO(G)$ sits inside $\{-1, 0, 1\}^{G^3}$.  Since $\CO(G)$ is a subspace of the Cantor set $\{-1, 0, 1\}^{G^3}$, $\CO(G)$ is totally disconnected.  In fact, we have a little more.

\begin{prop}\label{prop:COclosed}
$\CO(G)$ is a closed subspace of $\{-1, 0, 1\}^{G^3}$.
\end{prop}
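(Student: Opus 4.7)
The plan is to write $\CO(G)\subseteq\{-1,0,1\}^{G^3}$ as an intersection of sets, each defined by a single instance of one of the three axioms (DV), (C), (H) from Definition \ref{defn:CO}, and to observe that each such set is closed (in fact clopen) because it constrains only finitely many coordinates of the product space.

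More precisely, recall that a subbasis for the Tychonoff topology on $\{-1,0,1\}^{G^3}$ is given by the cylinder sets $B_{T,\epsilon}=\{f:G^3\to\{-1,0,1\}\mid f(T)=\epsilon\}$, which are clopen. For each $T\in G^3$, the set of maps $\phi$ satisfying axiom (DV) at $T$, namely
\[
D_T:=\begin{cases} B_{T,0} & \text{if } T\in\Delta_3(G),\\ B_{T,+1}\cup B_{T,-1} & \text{otherwise,}\end{cases}
\]
is a finite union of cylinder sets, hence clopen. Similarly, for each quadruple $(g_0,g_1,g_2,g_3)\in G^4$, the set $C_{g_0,g_1,g_2,g_3}$ of maps satisfying the cocycle identity at that quadruple is determined by finitely many coordinates, so it is a finite union of finite intersections of cylinder sets, hence clopen. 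Finally, for each $g\in G$ and each triple $(g_0,g_1,g_2)\in G^3$, the homogeneity constraint $\phi(g_0,g_1,g_2)=\phi(gg_0,gg_1,gg_2)$ is a clopen condition $H_{g,g_0,g_1,g_2}$ on two coordinates.

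The proof is then to write
\[
\CO(G)=\bigcap_{T\in G^3}D_T\;\cap\bigcap_{(g_0,\ldots,g_3)\in G^4}C_{g_0,g_1,g_2,g_3}\;\cap\bigcap_{g,g_0,g_1,g_2\in G}H_{g,g_0,g_1,g_2},
\]
and conclude that $\CO(G)$ is closed as an intersection of closed sets. There is no real obstacle here; the only thing to be slightly careful about is verifying that the displayed intersection really does recover exactly the set of maps satisfying (DV), (C), and (H), which is immediate by unpacking the definitions. One could equivalently phrase the argument sequentially: if $\phi_n\to\phi$ pointwise with each $\phi_n\in\CO(G)$, then for any finite collection of triples the values $\phi_n$ takes on them eventually stabilize to those of $\phi$, so each of (DV), (C), (H) passes to the limit.
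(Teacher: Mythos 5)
Your proposal is correct and follows essentially the same route as the paper's proof: both express $\CO(G)$ as an intersection, over all individual instances of the axioms (DV), (C), and (H), of sets that depend on only finitely many coordinates and are therefore clopen in the product topology. No issues.
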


\begin{proof}
Observe that each condition in Definition \ref{defn:CO} is a closed condition. That is, if we write
$$\begin{aligned} \CO(G) = &\left(\bigcap_{T \in \Delta(G)} B_{T,0}\right)\cap\left( \bigcap_{T \notin \Delta(G)} B_{T,-1} \cup B_{T,+1}\right)\\
&\cap\left(\bigcap_{Q \in G^4} \{f \in \{-1, 0, 1\}^{G^3} \mid df(Q) = 0\}\right) \\
&\cap\left(\bigcap_{g \in G, T \in G^3} \{f \in \{-1, 0, 1\}^{G^3} \mid f(g\cdot T) = f(T) \}\right)\end{aligned}$$
then it is easy to check that any subset of $\{-1, 0, 1\}^{G^3}$ of one of the forms
$$B_{T,0},$$
$$B_{T,-1} \cup B_{T,+1},$$
$$\{f \mid df(Q) = 0\},$$
or
$$\{f \mid f(g\cdot T) = f(T) \}$$
is closed.
\end{proof}

%The set of triples $\{-1, 0, 1\}^{G^3}$ is a Cantor set whose topology we can construct as follows. If $(\phi_i)$ is a sequence in $\{-1, 0, 1\}^{G^3}$ converging to $\phi$ then, defining
%$$A_i := \{ (g_0, g_1, g_2) \in G^3 : \phi_i(g_0, g_1, g_2) = \phi(g_0, g_1, g_3)\} $$
%we have that for all $i$ there exists $S_i \subset A_i$ such that $\bigcup_i S_i = \{-1, 0, 1\}^{G^3}$ and the $S_i$ are increasing.  Let $\phi_i \in \CO(G) \subset \{-1, 0, 1\}^{G^3}$ and suppose $\phi_i \to \phi \in \{-1, 0, 1\}^{G^3}$. We check properties $(DV), (H)$ and $(C)$.

%Let $(g_0,g_1,g_2) \in \Delta_3(G)$. Since the $S_i$ filter $\{-1, 0, 1\}^{G^3}$ the triple $(g_0,g_1,g_2)$ is in $S_i$ for all $i$ large enough.  Going the other way, if we have that $\phi(g_0,g_1,g_2) = 0$ then there is some $n$ such that for all $i$ large enough $(g_0,g_1,g_2) \in S_i$.  Thus $\phi_i(g_0,g_1,g_2)=0$.  (In fact, $\phi_i(g_0,g_1,g_2)=0$ \emph{for all} $i$.)

%We show $\phi(g\cdot g_0,g\cdot g_1,g\cdot g_2) = \phi(g_0,g_1,g_2)$.  To see this, take the maximum of the two minimal indices $i$ and $i_g$ such that $(g_0,g_1,g_2) \in S_i$ and $(g\cdot g_0,g\cdot g_1,g\cdot g_2) \in S_{i_g}$. Call this max $j$.  Then the two triples are both in $S_k$ for $k$ bigger than $j$.  Since all of these $\phi_k$ satisfy are homogeneous with respect to $(g_0,g_1,g_2)$ and $g$, $\phi$ is too.

%Showing $\phi$ is a cocycle is analogous: just take the max of the four different indices coming from the four different triples in the cocycle equation.

\begin{cor}
\label{cor:CORE}
There is an algorithm that, when input a finitely presented group $G = \langle S \mid R\rangle$, outputs (in finite time) $\mathsf{NO}$ together with an obstruction certificate if $G$ is not circularly orderable; if $G$ is circularly orderable, the algorithm runs forever.  In particular, non-circular-orderability is a recursively enumerable property.
\end{cor}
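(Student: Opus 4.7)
The plan is to leverage the compactness of $\CO(G)$ together with its description as an intersection of subbasic closed sets, each of which depends on only finitely many group elements. Concretely, by Proposition~\ref{prop:COclosed} we have $\CO(G) = \bigcap_\alpha F_\alpha$, where the index $\alpha$ ranges over finitely-supported data: a triple in $\Delta_3(G)$, a triple in $G^3 \setminus \Delta_3(G)$, a $4$-tuple in $G^4$, or a pair $(g, T) \in G \times G^3$. If $G$ is not circularly orderable, then $\CO(G) = \emptyset$, and since $\{-1,0,1\}^{G^3}$ is compact by Tychonoff, finitely many of the $F_\alpha$ already intersect to the empty set. Together these finitely many conditions mention only finitely many elements of $G$, so there is in principle a finite algebraic certificate of non-orderability, which is what we will enumerate.

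The algorithm proceeds in stages. At stage $n$, it lists the finite set $W_n$ of all words in the generators $S$ of length at most $n$, and enumerates the set $E_n$ of word-equalities $u =_G v$ provable from $R$ using at most $n$ substitutions; let $\sim_n$ be the equivalence relation on $W_n$ generated by $E_n$. The algorithm then asks, by finite brute-force search, whether there exists a map $c : W_n^3 \to \{-1,0,1\}$ satisfying the following finite list of constraints: (i) $c(w_1,w_2,w_3) = 0$ whenever two of the $w_i$ are $\sim_n$-equivalent; (ii) $c$ is constant on $\sim_n$-equivalence classes of triples; (iii) the cocycle identity holds on every $4$-tuple in $W_n^4$; (iv) if $g, w_1, w_2, w_3, w_1', w_2', w_3' \in W_n$ and $g w_i \sim_n w_i'$ for each $i$, then $c(w_1,w_2,w_3) = c(w_1', w_2', w_3')$. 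If no such $c$ exists at stage $n$, the algorithm outputs $\mathsf{NO}$ together with the data $(W_n, E_n)$ as an obstruction certificate; otherwise it proceeds to stage $n+1$.

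Correctness has two directions. If $G$ is circularly orderable, then any $c \in \CO(G)$ pulls back via $w \mapsto [w]_G$ to a valid partial assignment at every stage, so the algorithm never halts. Conversely, if $G$ is not circularly orderable, compactness produces a finite list of violated closed conditions involving a finite set $F \subset G$ of elements; choose words $v_1, \ldots, v_k$ representing $F$ and take $n$ large enough that all $v_i$ lie in $W_n$, that every true equality among relevant products of the $v_i$ in $G$ is already in $E_n$, and that all group elements needed to witness the homogeneity and cocycle constraints are present. At that stage the existence of a partial assignment $c$ would, by restriction along the representatives $v_i$, contradict the compactness witness, so the search fails and the algorithm halts.

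The main obstacle is that the word problem for $G$ is generally undecidable, so the algorithm cannot detect distinctness of group elements and cannot directly enforce the ``nondegenerate triples receive nonzero values'' half of (DV); the proof circumvents this by enforcing only the semi-decidable half (degenerate implies zero) and relying on compactness to ensure that, for a non-orderable $G$, the obstruction surfaces once enough true equalities among the finitely many relevant words have been discovered. The remaining steps (finiteness of the CSP at each stage, enumerability of provable equalities, etc.) are routine.
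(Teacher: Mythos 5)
There is a genuine gap, and it is fatal to the algorithm as you have described it: the constraint satisfaction problem you solve at stage $n$ is \emph{always} satisfiable, because the identically zero map $c \equiv 0$ on $W_n^3$ satisfies all of (i)--(iv). Conditions (i), (ii) and (iv) hold trivially for the zero map, and its coboundary vanishes, so (iii) holds as well. The only axiom of Definition \ref{defn:CO} that excludes $c\equiv 0$ is the half of (DV) asserting that nondegenerate triples take \emph{nonzero} values, and that is exactly the constraint you discard. Consequently your algorithm never outputs $\mathsf{NO}$ on any input. Compactness does not rescue this: in the decomposition of Proposition \ref{prop:COclosed}, the intersection of all the closed conditions \emph{other than} the sets $B_{T,-1}\cup B_{T,+1}$ (for $T\notin\Delta_3(G)$) already contains the zero function, so any finite subfamily with empty intersection must include at least one condition of the form ``$f(T)\neq 0$ for this particular nondegenerate $T$.'' Every finite obstruction therefore necessarily asserts that certain specific words are pairwise distinct in $G$, and a satisfying assignment for your relaxed system does not restrict to anything contradicting such a witness. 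Your final paragraph, which claims the obstruction ``surfaces once enough true equalities have been discovered,'' is where the proof breaks: discovering equalities only ever adds constraints of the form $c(T)=0$, which the zero function happily absorbs.

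For comparison, the paper's proof runs the same compactness argument but then enumerates finite subsets of triples of \emph{elements} of $G$ and imposes the full homogeneous cocycle conditions, nondegeneracy included; it simply does not address how the algorithm accesses the equality (and, more to the point, the inequality) relation on $G$ from the presentation $\langle S\mid R\rangle$. Your attempt to confront the word problem head-on is the right instinct, but it exposes a difficulty that appears to be essential rather than technical. Inequality of words in a finitely presented group is not semi-decidable, and circular orderability is a Markov property, so the Adian--Rabin construction produces, from a group $H$ with unsolvable word problem and a word $w$, a presentation whose group is circularly orderable exactly when $w=_H e$; a semi-decision procedure for non-circular-orderability would then semi-decide $w\neq_H e$ and hence solve the word problem for $H$. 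You should therefore not expect any repair of the stage-$n$ CSP to work for arbitrary finitely presented groups. What the compactness argument genuinely yields is (a) the existence of a finite obstruction as a set of elements of $G$, and (b) a correct semi-decision procedure for the class of finitely presented groups with solvable word problem (where provable inequality is available and the nondegeneracy constraints can be imposed at each stage). If you want a salvageable statement, prove one of those instead.
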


\begin{proof}
We apply the finite intersection property.  The proof of the previous proposition shows that $\CO(G)$ is a closed subset of a Cantor set defined by the intersection of the collection
$$\mathcal{C} := \{B_{T,0}\}_{T \in G^3}\cup \{B_{T,-1} \cup B_{T,+1}\}_{T \in G^3}\cup\{\{f \mid df(Q) = 0\}\}_{Q \in G^4} \cup \{\{f \mid f(g\cdot T) = f(T) \}\}_{g \in G,T \in G^3}$$
of closed subsets each defined locally, \ie each involving only finitely many triples.  Since $\CO(G)$ is a compact subspace of $\{-1,0,+1\}^{G^3}$, if $\CO(G) = \emptyset$, the finite intersection property implies that the intersection of some finite collection of sets in $\mathcal{C}$ is empty.  Combining these two observations, if $G$ is not circularly orderable, some finite obstruction to circular orderability (in the form of a finite family of triples in $G^3$) must exist.

The brute-force algorithm proceeds by enumerating every finite subset of triples of elements of $G$ and checking if that finite subset can possibly satisfy the homogeneous 2-cocycle conditions.  If a subset is found that can not, the algorithm outputs NO, together with that subset.  If $G$ is not circularly orderable, the previous paragraph shows such a subset will always eventually be found. If $G$ is circularly orderable, such a finite subset will never be found, so the algorithm runs without terminating. Finally, to see that non-circular-orderability is a recursively enumerable property, simply enumerate all finitely presented groups while applying the brute-force search in parallel as you enumerate; whenever an obstruction is found for a group, add that group to the list of non-circularly-orderable groups.
\end{proof}

%%%%%
\subsection{Subspaces of $\CO(G)$}\label{ss:subspaces}
As discussed, the definition of left-invariant circular order can be strengthened to require that the order be homogeneous with respect to \emph{right} multiplication as well, and such an order is called a bi-invariant circular order.

\begin{defn}
$\BCO(G) \subset \CO(G)$ is the subspace of bi-invariant circular orders.
\end{defn}

For example, if $A$ is an abelian group, all left invariant orders are automatically bi-invariant, so $\BCO(A) = \CO(A)$.  More generally, arguing as we did in Proposition \ref{prop:COclosed}, it is easy to check

\begin{lem}\label{lem:BCOclose}
$\BCO(G)$ is a closed subspace of $\CO(G)$. \hfill\qed
\end{lem}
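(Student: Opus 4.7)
The plan is to mimic the proof of Proposition \ref{prop:COclosed}, which exhibited $\CO(G)$ as an intersection of closed subsets of $\{-1,0,1\}^{G^3}$, each defined by a condition involving only finitely many coordinates. Bi-invariance is just left-invariance augmented with right-invariance, and right-invariance is exactly the same flavor of condition; the only thing to check is that it too can be written as an intersection of closed ``cylinder'' sets.

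First, I would recall that $\CO(G)$ is already known to be closed in $\{-1,0,1\}^{G^3}$ by Proposition \ref{prop:COclosed}, so it suffices to show $\BCO(G)$ is closed in $\{-1,0,1\}^{G^3}$; the relative topology then makes it closed in $\CO(G)$. Next, for each $g \in G$ and each $T = (g_0, g_1, g_2) \in G^3$, define
\[
R_{g,T} := \{\, f \in \{-1,0,1\}^{G^3} \mid f(g_0 g, g_1 g, g_2 g) = f(g_0, g_1, g_2)\,\}.
\]
Each $R_{g,T}$ is a closed set, since it is determined by the values of $f$ on just two points of $G^3$ (and the discrete space $\{-1,0,1\}$ makes equality of two coordinates a clopen condition).

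Then I would simply write
\[
\BCO(G) \;=\; \CO(G) \,\cap\, \bigcap_{\,g \in G,\; T \in G^3} R_{g,T},
\]
which expresses $\BCO(G)$ as an intersection of closed sets, and is therefore closed. I expect no real obstacle: the argument is entirely formal, identical in structure to the closedness of the homogeneity condition (H) already used in Proposition \ref{prop:COclosed}, with left multiplication replaced by right multiplication. The only stylistic choice is whether to write out the intersection display or to simply invoke Proposition \ref{prop:COclosed} and remark that adjoining the right-invariance family of closed conditions preserves closedness; given the ``\qed'' at the end of the lemma statement in the source, the authors evidently expect the latter.
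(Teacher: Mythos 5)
Your proof is correct and is exactly the argument the paper intends: the lemma is stated with a \verb|\qed| because the authors expect the reader to repeat the argument of Proposition \ref{prop:COclosed}, adjoining the right-invariance conditions $f(g_0 g, g_1 g, g_2 g) = f(g_0,g_1,g_2)$ as an additional family of closed cylinder sets. Nothing to add.
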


In Section 2.1 we constructed circular orders from linear orders.  Now we define a map from the set $\LO(G)$ of left-invariant linear orders to $\CO(G)$:
$$\begin{aligned} i: \LO(G) &\into \CO(G) \\
< &\mapsto c_<\end{aligned}$$
where $c_<$ was defined in equation \ref{eqn:blergh} of Subsection \ref{ss:threeperspectives}. The map $i$ is obviously injective.  Suppose $<^1, <^2$ are two different left-invariant linear orders on $G$.  Then there exists a non-trivial element $g$ of $G$ such that $g >^1 \Id $ and $g <^2 \Id$.  But then, $c_{<^1}(g^{-1}, \Id, g) = +1$ and $c_{<^2}(g^{-1}, \Id, g) = -1$.  Hence, $i(<^{1}) \neq i(<^{2})$. 

\begin{defn}
Let $\LO(G) := i(\LO(G))$ denote the subspace of $\CO(G)$ consisting of all left-invariant linear orders.  A circular order that can not be realized as arising from a linear order is called a \emph{genuine} or \emph{proper} circular order.  $\CO_g(G):= \CO(G) \setminus \LO(G)$ denotes the subspace of genuine circular orders.
\end{defn}

For example, since $\Homeop(\RR)$ is torsion-free, any circular order on a finite cyclic group is genuine. If $G=\ZZ/n\ZZ$ is a finite cyclic group, then there are precisely $\phi(n)$ circular orders on $G$, where $\phi$ is Euler's totient function.  Let $\Sigma_{g,n}$ be a surface of genus $g$ with $n$ punctures, and let $\MCG(\Sigma_{g,n})$ or $\MCG_{g,n}$ denote the mapping class group of the surface $\Sigma_{g,n}$. Since $\MCG_{g,n}$ has torsion elements, it is not left-orderable, hence $\CO(\MCG_{g,n}) = \CO_g(\MCG_{g,n})$.

Topologizing a space of orders is not a new idea.  Indeed, as discussed in the introduction, Sikora introduced a topology on $\LO(G)$ \cite{Sikora04} in which a subbasis consists of sets of the form
$$S_{a,b} = \{ < \in \LO(G) \mid a < b \}$$
where $a,b \in G$.  Here $<$ is to be considered as a total order in the usual way, and not as a function on triples.

\begin{prop}\label{prop:LOembeds}
The inclusion map $i: \LO(G) \into \CO(G)$ is an embedding, where $\LO(G)$ is topologized as by Sikora and $\CO(G)$ is topologized as a subspace of $\M$.
\end{prop}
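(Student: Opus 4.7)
The plan is to verify that $i$ is a homeomorphism onto its image by comparing subbases: Sikora's $\{S_{a,b}\}_{a,b \in G}$ on $\LO(G)$ against $\{B_T\}_{T \in G^3 \setminus \Delta_3(G)}$ on $\CO(G)$ from Lemma~\ref{lem:COsubbasis}. Injectivity has already been established above, so it suffices to show continuity of $i$ and openness of $i$ onto its image.

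For continuity, I fix a subbasic open set $B_T$ of $\CO(G)$ with $T = (x,y,z)$. From the definition of $c_<$, $c_<(x,y,z) = +1$ iff one of the three cyclic arrangements $x<y<z$, $y<z<x$, or $z<x<y$ holds, so
$$i^{-1}(B_T) = (S_{x,y} \cap S_{y,z}) \cup (S_{y,z} \cap S_{z,x}) \cup (S_{z,x} \cap S_{x,y}),$$
which is open in $\LO(G)$. For openness onto the image, the goal is to realize each subbasic open $S_{a,b}$ as the intersection of $i(\LO(G))$ with a single $B_T$. Using left-invariance one reduces to the case $a = \Id$: namely $S_{a,b} = S_{\Id,\, h}$ with $h := a^{-1}b \neq \Id$. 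I will then show that $\Id < h$ iff $c_<(h^{-1}, \Id, h) = +1$, which yields $i(S_{a,b}) = i(\LO(G)) \cap B_{(h^{-1}, \Id, h)}$ and completes the proof.

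The main subtlety lies in that final equivalence. Since linearly orderable groups embed in $\Homeop(\RR)$ and are therefore torsion-free, $h \neq h^{-1}$ and the triple $(h^{-1}, \Id, h)$ is non-degenerate. The forward direction is easy: left-invariance applied with multiplier $h^{-1}$ gives $\Id < h$ iff $h^{-1} < \Id$, so one gets $h^{-1} < \Id < h$ and hence $c_<(h^{-1}, \Id, h) = +1$. The converse is where I expect the most care: among the three cyclic arrangements permitted by $c_<(h^{-1}, \Id, h) = +1$, namely $h^{-1} < \Id < h$, $\Id < h < h^{-1}$, and $h < h^{-1} < \Id$, the inversion symmetry afforded by left-invariance is needed to rule out the last case (it would force both $h < \Id$ and $\Id < h^{-1}$, contradicting the equivalence $\Id < h \Leftrightarrow h^{-1} < \Id$), leaving the remaining two arrangements, each of which implies $\Id < h$.
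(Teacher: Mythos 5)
Your proof is correct, and the first half (continuity) is exactly the paper's computation: you pull back a subbasic set $B_T$ to the union of the three cyclic arrangements, each a finite intersection of Sikora sets. Where you genuinely diverge is in how you conclude that $i$ is an embedding. The paper stops after continuity and injectivity and invokes Sikora's Theorem 1.4 ($\LO(G)$ is compact) together with the general fact that a continuous injection from a compact space into a Hausdorff space is a homeomorphism onto its image. You instead prove openness onto the image directly, by realizing each Sikora subbasic set as the trace of a single $\CO$-subbasic set: reducing $S_{a,b}$ to $S_{\Id,h}$ with $h=a^{-1}b$ via left-invariance, and then establishing the equivalence $\Id < h \Leftrightarrow c_<(h^{-1},\Id,h)=+1$, where the only nontrivial point is ruling out the cyclic arrangement $h < h^{-1} < \Id$ using the inversion symmetry $\Id < h \Leftrightarrow h^{-1}<\Id$; your handling of this, and of nondegeneracy of the triple via torsion-freeness, is correct (the degenerate case $a=b$ gives the empty set and is harmless). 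Your route is more self-contained --- it does not need Sikora's compactness theorem --- and it yields the slightly stronger explicit statement that the subspace topology on $i(\LO(G))$ is generated by the sets $i(\LO(G))\cap B_{(h^{-1},\Id,h)}$; the paper's route is shorter given that compactness of $\LO(G)$ is already on the shelf. Incidentally, your identity $i(S_{a,b})=i(\LO(G))\cap B_{(h^{-1},\Id,h)}$ is essentially the same observation the paper uses later, in the discussion preceding Proposition \ref{thm:linearcharacterization}, to characterize linear orders among circular ones via the positive cone $\{g : c(g^{-1},\Id,g)=1\}$.
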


\begin{proof}
By Lemma \ref{lem:COsubbasis}, $\CO(G)$ has a basis given by sets of the form
$$B_{T_1,\dots,T_n} := \{ c \in \CO(G) \mid c(T_j) = +1 \}$$
where $T_j := (x_j,y_j,z_j) \in G^3$.  Then
$$i^{-1}(B_{T_1,\dots,T_n})
= \{ < \in \LO(G) \mid c_<(T_j) = +1 \} 
= \bigcap_j \left( S_{x_j,y_j,z_j} \cup S_{y_j,z_j,x_j} \cup S_{z_j,x_j,y_j} \right)$$
where
$$S_{x,y,z} := S_{x,y} \cap S_{y,z}.$$
This shows $i$ is a continuous injection. Theorem 1.4 of \cite{Sikora04} says that $\LO(G)$ is a compact (and totally disconnected) space. This suffices to prove $i$ is an embedding, since a continuous injective function on a compact set is always an embedding (\ie a homeomorphism onto its image).
\end{proof}
%To prove $i$ is an embedding requires a little more work.  As follows from the results in the next section, in general $i$ will not be an open map.  But it is closed, which we now show.  Since a closed and injective continuous map is an embedding, the result will follow.

%Any closed set $C$ in $\LO(G)$ can be written
%$$C = \bigcap_{j} \bigcup_{i=1}^{I_j} S_{x_{j,i},y_{j,i}} = \bigcap_{j} \bigcup_{i=1}^{I_j} \{< \in \LO(G) \mid x_{j,i} < y_{j,i}\}$$
%where $I_j$ is a positive integer depending on $j$, and $x_{j_i},y_{j_i} \in G$.  If $< \in \LO(G)$ and $c_< = i(<)$, then $a<b$ if and only if $c_<(b^{-1}a,1,a^{-1}b) = 1$.  Thus
%$$i(C) = \LO(G) \cap \bigcap_{j} \bigcup_{i=1}^{I_j} \{c \in \CO(G) \mid c(y_{j,i}^{-1}x_{j,i},1, x_{j,i}^{-1}y_{j,i})=1\}.$$
%Since all of the sets involved in this intersection are closed in $\CO(G)$, we conclude $i$ is closed.

Next, we show $\LO(G)$ is closed inside of $\CO(G)$.  To do so, we shall find a simple criterion to tell which circular orderings are not genuine.  Our starting point is the following characterization of left-invariant orders on $G$:

\begin{lem}\label{lem:positivecone}
A group $G$ admits a left-invariant order if and only if there is a disjoint partition of $G = N \cup \{e\} \cup P$ such that $P \cdot P \subset P$ and $P^{-1} = N$ 
\end{lem}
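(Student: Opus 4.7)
The plan is to establish the standard ``positive cone'' equivalence, which is classical for linear orders and proceeds by bootstrapping from the trichotomy of a total order. Both directions are formal and there is no serious obstacle; the main thing to be careful about is confirming that left-invariance corresponds exactly to the cone condition and not to some two-sided condition.

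For the forward direction, I would start with a left-invariant linear order $<$ on $G$ and set
\[ P := \{g \in G \mid e < g\}, \qquad N := \{g \in G \mid g < e\}. \]
Trichotomy of $<$ immediately gives the disjoint decomposition $G = N \cup \{e\} \cup P$. For closure $P \cdot P \subset P$, take $a, b \in P$; then $e < b$, and applying left-invariance with the element $a$ yields $a = a \cdot e < a \cdot b$, combined with $e < a$ and transitivity this gives $e < ab$, so $ab \in P$. For $P^{-1} = N$, note that $e < a$ together with left multiplication by $a^{-1}$ gives $a^{-1} < e$, so $a^{-1} \in N$; the reverse containment is symmetric.

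For the converse, given a partition $G = N \cup \{e\} \cup P$ with $P \cdot P \subset P$ and $P^{-1} = N$, I would define
\[ a < b \iff a^{-1}b \in P. \]
The verifications then break into four short routine checks: (i) trichotomy follows because $a^{-1}b$ lies in exactly one of $N, \{e\}, P$, and the case $a^{-1}b \in N = P^{-1}$ is equivalent to $b^{-1}a \in P$, i.e.\ $b < a$; (ii) irreflexivity is immediate since $e \notin P$; (iii) transitivity uses $a^{-1}c = (a^{-1}b)(b^{-1}c) \in P \cdot P \subset P$; (iv) left-invariance is the identity $(ga)^{-1}(gb) = a^{-1}b$, so membership in $P$ is preserved under left multiplication.

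The only subtle point to flag is the role of $P^{-1} = N$: this is precisely what guarantees that the relation $<$ is antisymmetric and total, tying the two ``halves'' of the partition together. I would mention in passing that the same cone $P$ is often called the \emph{positive cone} of the order, matching the terminology already introduced in \S\ref{ss:threeperspectives}, and that the construction is functorial in the evident sense (isomorphisms of ordered groups correspond bijectively to isomorphisms preserving the positive cone), though this extra observation is not needed for the statement at hand.
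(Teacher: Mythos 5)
Your proposal is correct and follows essentially the same route as the paper's proof: the paper likewise takes $P = \{g \in G : g > e\}$ in the forward direction and defines $h < g \iff h^{-1}g \in P$ for the converse, merely leaving the routine verifications (trichotomy, transitivity, left-invariance) to the reader, which you have spelled out accurately. No gaps.
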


\begin{proof} 
If $G$ has a left-invariant order $<$, then we can set $P = \{g\in G : g > e\}$. For the converse, we can define an order by $h<g$ if and only if $h^{-1}g \in P$ for all $g, h \in G$. 
\end{proof}

For a linear order $<$ on $G$, the set $P = \{g\in G : g > e\}$ is \emph{the positive cone} of $G$ with respect to the order $<$.  We just observed that a left-invariant positive cone characterizes a left-invariant order. This can be used to characterize the non-genuine circular orderings on $G$. 

\begin{prop}\label{thm:linearcharacterization}
Let $c$ be a circular ordering on a group $G$. Then $c$ is linear if and only if $c$ satisfies $c(h^{-1}g^{-1}, e, gh) = 1$ whenever both $c(h^{-1}, e, h)$ and $c(g^{-1}, e, g)$ are $1$. 
\end{prop}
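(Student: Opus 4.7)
The forward implication is direct. If $c = c_<$ comes from a left-invariant linear order $<$ with positive cone $P := \{g : g > e\}$, then unwinding the formula defining $c_<$ shows that $c(h^{-1}, e, h) = +1$ iff $h \in P$: among the three cyclic patterns $h^{-1} < e < h$, $e < h < h^{-1}$, $h < h^{-1} < e$, only the first is consistent with $h \in P$ (which forces $h^{-1} \in N$). Since $P$ is closed under multiplication, $g, h \in P$ implies $gh \in P$, whence $c(h^{-1}g^{-1}, e, gh) = +1$.

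For the converse, the plan is to extract a left-invariant linear order from $c$ and then verify that the two cocycles agree. Define
\[
P := \{g \in G \setminus \{e\} : c(g^{-1}, e, g) = +1\}, \qquad N := G \setminus (P \cup \{e\}).
\]
Property (DV) forces $c(g^{-1}, e, g) \in \{-1, +1\}$ for $g \neq e$, giving the disjoint partition $G = N \sqcup \{e\} \sqcup P$; property (AT) applied to $c(g, e, g^{-1}) = -c(g^{-1}, e, g)$ yields $N = P^{-1}$. The hypothesis is precisely $P \cdot P \subset P$, so Lemma \ref{lem:positivecone} produces a unique left-invariant linear order $<$ on $G$ with positive cone $P$, namely $a < b \iff b^{-1}a \in N$.

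The remaining task is to show $c = c_<$. By the homogeneity property (H) of both cocycles, it suffices to check $c(x, e, z) = c_<(x, e, z)$ for all distinct $x, z \in G \setminus \{e\}$. Case analysis on whether $x, z$ lie in $P$ or $N$, combined with (H), (IC), (AT), and the rewritings $c(x, e, z) = c(z^{-1}, e, z^{-1}x) = c(x^{-1}z, e, x^{-1})$ (both obtained from (H) followed by (IC)), reduces every case to a single key identity: whenever $a \in N$ and $b \in P$, one has $c(a, e, b) = +1$. For instance, for $x, z \in P$ with $z^{-1}x \in P$, the first rewriting places us directly in the key case; if instead $z^{-1}x \in N$, the second rewriting combined with (AT) does so; the cases $x, z \in N$ are handled analogously, and the case $x \in P, z \in N$ reduces via (AT) to the case $x \in N, z \in P$.

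The main obstacle is establishing the key identity cleanly. The plan is to apply the 2-cocycle relation (C) to a carefully chosen 4-tuple, combining the three known values $c(a^{-1}, e, a) = -1$, $c(b^{-1}, e, b) = +1$, and $c(b^{-1}a, e, a^{-1}b) = +1$---the last coming from the hypothesis applied to $a^{-1}, b \in P$, which forces $a^{-1}b \in P$. While the algebraic bookkeeping is mechanical, care is needed to dispatch degenerate sub-cases separately (for example, when the chosen 4-tuple has collisions such as $a^{-1} = b$ or $a^2 = e$, where one may use a different auxiliary element in the cocycle).
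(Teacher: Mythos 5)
Your forward direction is the paper's argument essentially verbatim. For the converse, the paper stops where your first step ends: it defines $P=\{g : c(g^{-1},e,g)=+1\}$, notes that the hypothesis is exactly $P\cdot P\subset P$, and invokes Lemma \ref{lem:positivecone} to produce a left-invariant linear order $<$ --- and there it stops. It never verifies that $c$ coincides with the circular order $c_<$ induced by $<$, which is what ``$c$ is linear'' means by the paper's own definition of $\LO(G)\subset\CO(G)$; as written, the paper's converse only shows $G$ is left-orderable. Your proposal correctly identifies this missing step and supplies a plan for it: reduce, via (H), (IC), (AT) and the two rewritings $c(x,e,z)=c(z^{-1},e,z^{-1}x)=c(x^{-1}z,e,x^{-1})$, to the single key identity $c(a,e,b)=+1$ for $a\in N$, $b\in P$. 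I checked the case analysis; the reduction is sound, and the resulting write-up would be strictly more complete than the paper's.

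One concrete caution about your plan for the key identity. It is true, but the three values you propose to feed into the cocycle relation are not sufficient: the value $c(b^{-1}a,e,a^{-1}b)=+1$, i.e.\ $a^{-1}b\in P$, is actually \emph{consistent} with the failure of $c(a,e,b)=+1$, so no amount of bookkeeping with that input will close the argument. What you need is the product in the other order, $ba^{-1}=b\cdot a^{-1}\in P$, i.e.\ $c(ab^{-1},e,ba^{-1})=+1$. A clean way to finish: let $\prec$ denote the cut order at $e$ (so $x\prec y$ iff $c(e,x,y)=+1$; transitivity of $\prec$ is where the cocycle condition enters, and the key identity is exactly $b\prec a$). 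Supposing $a\prec b$, the memberships $a\in N$, $b\in P$ give the chain $a^{-1}\prec a\prec b\prec b^{-1}$; left-translating the relation $a^{-1}\prec b^{-1}$ by $b$ and by $a$ (homogeneity plus cyclic invariance) yields $b\prec ba^{-1}$ and $ab^{-1}\prec a$, hence $ab^{-1}\prec ba^{-1}$, contradicting $ba^{-1}\in P$, which says precisely $ba^{-1}\prec ab^{-1}$. (The degenerate sub-cases you worry about largely evaporate: e.g.\ $ba^{-1}$ cannot equal $e$ or be an involution, since either would already violate the hypothesis $c(ab^{-1},e,ba^{-1})=1$.) So the plan is right in spirit --- the identity does follow from homogeneity, the symmetry properties, and the cocycle condition --- but it is a short chain of deductions through the order $\prec$ rather than one application of (C) to a single $4$-tuple, and you must invoke the hypothesis with the factors in the order $b,a^{-1}$.
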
 

\begin{proof} 
Suppose a circular ordering $c$ is induced by a linear order $<$ on $G$ and let $P$ be the positive cone of $G$ with respect to $<$. By definition, $c(h^{-1}, e, h) = 1$ if either $h^{-1} < e< h$, $e < h < h^{-1}$ or $h < h^{-1} < e$. But since $<$ is a linear order, we know that if $e < h$, ie., $h \in P$, then we must have $h^{-1} < e$. Hence the only possibility is $h^{-1} < e< h$ and we can conclude that $c(g^{-1}, e, g) = 1$ if and only if $g \in P$ for all $g \in G$. Since $P$ is invariant under left-multiplication, if $c(h^{-1}, e, h) = c(g^{-1}, e, g) = 1$, then $c(h^{-1}g^{-1}, e, gh) = 1$. 

Conversely, suppose a circular ordering $c$ on $G$ has the property that $c(h^{-1}g^{-1}, e, gh) = 1$ whenever both $c(h^{-1}, e, h)$ and $c(g^{-1}, e, g)$ are $1$. Then one can define a positive cone $P$ by $\{g \in G : c(g^{-1}, e, g) = 1\}$. Hence one obtains a left-invariant order by setting $h<g$ if and only if $h^{-1}g \in P$.
\end{proof} 

It is straightforward to show that the condition of the proposition is a closed condition in $\CO(G)$.  We conclude

\begin{cor}\label{cor:LOclosedinCO}
$\LO(G)$ is closed in $\CO(G)$. \qed
\end{cor}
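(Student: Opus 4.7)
The plan is to leverage Proposition \ref{thm:linearcharacterization}, which identifies $\LO(G)$ inside $\CO(G)$ as the set of circular orders satisfying a local closure-under-multiplication condition, and to observe that this condition involves only finitely many triples at a time and therefore cuts out a closed (in fact clopen) subset of $\CO(G)$. Since an arbitrary intersection of closed sets is closed, the result follows.

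Concretely, for each ordered pair $(g,h) \in G \times G$, I would define
\[
C_{g,h} := \{c \in \CO(G) \mid \text{if } c(h^{-1}, e, h) = 1 \text{ and } c(g^{-1}, e, g) = 1, \text{ then } c(h^{-1}g^{-1}, e, gh) = 1\}.
\]
By Proposition \ref{thm:linearcharacterization},
\[
\LO(G) = \bigcap_{(g,h) \in G \times G} C_{g,h}.
\]
So it suffices to show each $C_{g,h}$ is closed in $\CO(G)$.

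Rewriting the implication as a disjunction, we have
\[
C_{g,h} = \{c \mid c(h^{-1},e,h) \neq 1\} \cup \{c \mid c(g^{-1},e,g) \neq 1\} \cup \{c \mid c(h^{-1}g^{-1},e,gh) = 1\}.
\]
Each of these three subsets is defined by evaluating $c$ at a single fixed triple in $G^3$ and demanding it lie in a fixed subset of the discrete space $\{-1,0,1\}$. Such sets are clopen in the Tychonoff topology on $\{-1,0,1\}^{G^3}$, and hence clopen in $\CO(G)$. A finite union of clopen sets is clopen, so each $C_{g,h}$ is clopen, and the intersection displayed above is closed. The degenerate edge cases (when $g$, $h$, or $gh$ equals $e$) pose no difficulty: by (DV), (IC), and (AT) one checks directly that the hypothesis of the implication is vacuous in each such case, so the condition holds automatically.

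I do not anticipate any real obstacle here; this is essentially a bookkeeping exercise converting Proposition \ref{thm:linearcharacterization} into the statement that $\LO(G)$ is an intersection of cylinder sets. The only mildly subtle point is verifying that the ``local'' characterization of linearity really does descend to a condition at the level of single triples, which is immediate once the implication is rewritten as a disjunction as above.
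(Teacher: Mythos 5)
Your proposal is correct and is exactly the argument the paper intends: the paper states that the condition of Proposition \ref{thm:linearcharacterization} is ``a closed condition in $\CO(G)$'' and leaves the verification to the reader, and your decomposition of $\LO(G)$ as an intersection of finite unions of cylinder sets is precisely that verification. Nothing further is needed.
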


In particular, the proof of Corollary \ref{cor:CORE} can be modified to show

\begin{cor}
\label{cor:LORE}
There is an algorithm that, when input a finitely presented group $G = \langle S \mid R\rangle$, outputs (in finite time) $\mathsf{NO}$ together with an obstruction certificate if $G$ is not linearly orderable; if $G$ is linearly orderable, the algorithm runs forever.  In particular, non-linear-orderability is a recursively enumerable property. \qed
\end{cor}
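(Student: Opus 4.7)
The plan is to mimic the proof of Corollary \ref{cor:CORE}, using that $\LO(G)$ is also cut out of $\{-1,0,+1\}^{G^3}$ by a collection of ``locally defined'' closed conditions, each involving only finitely many triples of $G^3$. Since $\LO(G)$ is closed in $\CO(G)$ (Corollary \ref{cor:LOclosedinCO}), which in turn is closed in the Cantor space $\{-1,0,+1\}^{G^3}$ (Proposition \ref{prop:COclosed}), the space $\LO(G)$ is compact, so the finite intersection property will again yield a finite certificate of emptiness whenever $G$ fails to be linearly orderable.

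Concretely, I would assemble the defining collection $\mathcal{C}'$ of closed sets for $\LO(G)$ as the union of the collection $\mathcal{C}$ from the proof of Corollary \ref{cor:CORE} (which enforces the axioms of a circular order) together with, for every pair $(g,h) \in G \times G$, the closed set
\[
L_{g,h} := \{ f \in \{-1,0,+1\}^{G^3} \mid \text{if } f(h^{-1},e,h)=+1 \text{ and } f(g^{-1},e,g)=+1, \text{ then } f(h^{-1}g^{-1},e,gh)=+1\}.
\]
Each $L_{g,h}$ involves only three triples and is therefore closed, and by Proposition \ref{thm:linearcharacterization} the intersection $\bigcap \mathcal{C}'$ is exactly (the image under $i$ of) $\LO(G)$. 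Compactness of $\{-1,0,+1\}^{G^3}$ and the finite intersection property then guarantee that if $\LO(G) = \emptyset$, some finite subfamily of $\mathcal{C}'$ has empty intersection, which constitutes a finite obstruction to linear orderability expressible in terms of finitely many triples from $G^3$.

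The algorithm is then the obvious brute-force search: enumerate all finite subsets of triples of elements of $G$ (computed via the finite presentation $\langle S \mid R\rangle$, comparing words up to the relations in $R$), and for each such subset check whether one can consistently assign values in $\{-1,0,+1\}$ satisfying the cocycle, homogeneity, definiteness, and linearity conditions restricted to the chosen triples. If no such assignment exists, output $\mathsf{NO}$ together with that finite subset as the certificate; otherwise continue. The argument above ensures the algorithm terminates precisely when $G$ is not linearly orderable. Running this procedure in parallel over an enumeration of all finite presentations yields the recursive enumeration of non-linearly-orderable finitely presented groups.

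The only mildly delicate point is that the algorithm must check finite certificates using the group presentation, which requires comparing words in $\langle S \mid R \rangle$; but since we only need to recognize when a candidate assignment \emph{fails} some axiom, it suffices to enumerate word identities derivable from $R$ alongside the triple enumeration, so this is not a genuine obstacle---identical to the implicit handling in Corollary \ref{cor:CORE}.
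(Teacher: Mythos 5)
Your proposal is correct and follows essentially the same route the paper intends: it augments the closed, locally defined conditions from the proof of Corollary \ref{cor:CORE} with the linearity condition of Proposition \ref{thm:linearcharacterization} (equivalently, Corollary \ref{cor:LOclosedinCO}), and then reruns the compactness and finite intersection property argument. The paper leaves this modification implicit, and you have spelled it out accurately, including the correct observation that each set $L_{g,h}$ is closed because it involves only finitely many triples.
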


Since $\LO(G)$ is closed in $\CO(G)$, one might wonder if it is also open.  In Section \ref{sec:abeliangroups} we will see that $\LO(\ZZ^n)$ is very far from being open, as $\CO_g(\ZZ^n)$ is dense in $\CO(\ZZ^n)$.  We pose the following general

\begin{ques}
For a given group $G$, what is the limit set of $\CO_g(G)$ inside $\CO(G)$? In particular, are there examples of groups where the limit set of $\CO_g(G)$ is not all of $\LO(G)$?
\end{ques}

For example, Navas and Rivas have shown that the set of bi-invariant linear orders on Thompson's group $F$ consists of 8 isolated points and 4 Cantor sets \cite{NavasRivas10}.  Are these 8 exotic linear orders still isolated in $\CO(F)$?

%%%%%%%%%%%%
%%%%%%%%%%%%
\subsection{$\Aut(G)$ action on $\CO(G)$}\label{sec:autaction}
Let $\Aut(G)$ be the set of automorphisms of $G$.  There is a natural left action of $\Aut(G)$ on $\CO(G)$.

\begin{lem}\label{lem:autactiondef}\label{lem:autactionhomeo}
Let $\rho \in \Aut(G)$ and $\phi \in \CO(G)$. Define a map $\rho \cdot \phi : G^3 \to \{-1, 0, 1\}$ by
$$ \rho \cdot \phi(g_0, g_1, g_2) = \phi (\rho^{-1}(g_0), \rho^{-1}(g_1), \rho^{-1}(g_2)), \forall g_0, g_1, g_2 \in G. $$
Then $\rho \cdot \phi$ is again in $\CO(G)$.  Moreover, $\Aut(G)$ acts on $\CO(G)$ by homeomorphisms.
\end{lem}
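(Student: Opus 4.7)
The plan is to verify in sequence: (i) $\rho \cdot \phi$ satisfies the three axioms (DV), (C), (H) of Definition \ref{defn:CO}; (ii) the assignment $(\rho, \phi) \mapsto \rho \cdot \phi$ satisfies the action axioms; (iii) for each fixed $\rho$, the map $\phi \mapsto \rho \cdot \phi$ is continuous, from which being a homeomorphism follows immediately.

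For (i), each axiom is straightforward and it is just a matter of using the right property of $\rho$ at the right time. Since $\rho^{-1}$ is a bijection of $G$, it permutes $\Delta_3(G)$ and its complement, so (DV) for $\phi$ transfers to (DV) for $\rho \cdot \phi$. The cocycle condition (C) is preserved by any precomposition on a group-level bijection of coordinates, since it is a purely combinatorial identity summed over the four faces of a tetrahedron with vertices $(g_0, g_1, g_2, g_3)$; replacing each $g_i$ by $\rho^{-1}(g_i)$ just relabels vertices. For (H), the crucial point is that $\rho$ is a \emph{homomorphism}: given $g \in G$,
\[
(\rho \cdot \phi)(g g_0, g g_1, g g_2) = \phi\bigl(\rho^{-1}(g)\rho^{-1}(g_0),\, \rho^{-1}(g)\rho^{-1}(g_1),\, \rho^{-1}(g)\rho^{-1}(g_2)\bigr),
\]
and left-invariance of $\phi$ under the element $\rho^{-1}(g) \in G$ reduces this to $(\rho \cdot \phi)(g_0, g_1, g_2)$.

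For (ii), the identity automorphism obviously fixes every $\phi$, and for $\rho, \sigma \in \Aut(G)$ both $(\rho\sigma) \cdot \phi$ and $\rho \cdot (\sigma \cdot \phi)$ evaluate at a triple $T$ to $\phi(\sigma^{-1}\rho^{-1}(T))$, so they agree. Thus $\Aut(G)$ acts on $\CO(G)$ on the left.

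The continuity step (iii) is where I would invoke Lemma \ref{lem:COsubbasis}. For a subbasic open $B_T$ with $T = (g_0, g_1, g_2) \in G^3 \setminus \Delta_3(G)$, the preimage under $\phi \mapsto \rho \cdot \phi$ is
\[
\{\phi \in \CO(G) \mid \phi(\rho^{-1}(g_0), \rho^{-1}(g_1), \rho^{-1}(g_2)) = +1\} = B_{\rho^{-1}(T)},
\]
which is again subbasic open (here $\rho^{-1}(T) \notin \Delta_3(G)$ because $\rho^{-1}$ is injective). Hence $\rho$ acts continuously. Since $\rho^{-1} \in \Aut(G)$ also acts continuously and provides a two-sided inverse by (ii), the action of each $\rho$ is by homeomorphisms. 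None of the steps presents a genuine obstacle; the only thing to be careful about is to use that $\rho$ is a group homomorphism (not merely a set bijection) precisely in the verification of (H), since left-invariance is what distinguishes $\CO(G)$ from circular orders on $G$ as a set.
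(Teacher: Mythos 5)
Your proof is correct, and its overall structure (verify membership in $\CO(G)$, then prove continuity, then deduce the homeomorphism property from the group action axioms) matches the paper's. The one place where you genuinely diverge is the continuity step. The paper proves continuity of $\phi \mapsto \rho\cdot\phi$ sequentially: it takes a convergent sequence $\phi_i \to \phi$, characterizes convergence in the product topology via an increasing exhaustion of $G^3$ by the agreement sets $A_i = \{T : \phi_i(T) = \phi(T)\}$, and observes that $\rho^{-1}$ carries this exhaustion to the corresponding one for $\rho\cdot\phi_i$ and $\rho\cdot\phi$. You instead compute the preimage of a subbasic open set directly, $\{\phi : \rho\cdot\phi \in B_T\} = B_{\rho^{-1}(T)}$, invoking Lemma \ref{lem:COsubbasis}. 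Your route is arguably cleaner: it needs no characterization of convergence in $\{-1,0,1\}^{G^3}$ and works verbatim for any topology generated by the $B_T$, whereas the paper's sequential argument implicitly relies on the space being first countable (which holds here since $G$ is countable, but is an extra hypothesis being used silently). You also spell out the verification of (DV), (C), and (H) that the paper dismisses as straightforward, and you correctly isolate the one nontrivial point there: the homomorphism property of $\rho$ is needed exactly for left-invariance (H), while (DV) and (C) only use that $\rho^{-1}$ is a bijection. Both approaches are sound; nothing is missing from yours.
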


\begin{proof}
It is straightforward to check $\rho \cdot \phi \in \CO(G)$.  It is also straightforward to check that $\Aut(G)$ acts by homeomorphisms once we know $\phi \mapsto \rho \cdot \phi$ is a continuous map, so we show this.

Suppose $(\phi_i)$ is a sequence in $\CO(G)$ converging to $\phi \in \CO(G)$. Setting $A_i := \{ (g_0, g_1, g_2) \in G^3 : \phi_i(g_0, g_1, g_2) = \phi(g_0, g_1, g_3)\}$, $\phi_i \to \phi$ is equivalent to that the sequence $(A_i)$ has an increasing subsequence $(A_{i_j})$ so that $\cup_j A_{i_j} = G^3$. For any $\rho \in \Aut(G)$, it is clear that $(\rho^{-1}(A_{i_j}))$ is an increasing sequence so that $\cup_j \rho^{-1}(A_{i_j}) = \rho^{-1}(G^3) = G^3$. Furthermore, $\rho^{-1}(A_i) = \{ (g_0, g_1, g_2) \in G^3 : \rho\cdot \phi_i(g_0, g_1, g_2) = \rho\cdot \phi(g_0, g_1, g_3)\}$. Hence $\rho\cdot \phi_i$ converges to $\rho\cdot \phi$.
\end{proof} 

Let $\BCO(G)$ be the set of bi-invariant circular orders on $G$, \ie the circular orders which are invariant under both left and right mulplication.  It is easy to see that $\Inn(G)$ acts trivially on $\BCO(G)$, hence the $\Aut(G)$ action on $\CO(G)$ induces an action of $\Out(G)$ on $\BCO(G)$.

Further fruitful analysis of the action of $\Aut(G)$ on $\CO(G)$ would likely require restrictions on $G$.  One result in this direction is

\begin{prop}\label{thm:koberda}
Let $G$ be a residually torsion-free nilpotent group. Then the map $\Aut(G) \to \Homeo(\CO(G))$ is injective.
\end{prop}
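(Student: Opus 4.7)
The plan is to reduce this proposition to Koberda's theorem from \cite{Koberda11} that $\Aut(G)$ acts faithfully on $\LO(G)$ when $G$ is residually torsion-free nilpotent. Since a residually torsion-free nilpotent group is itself bi-orderable (torsion-free nilpotent groups are bi-orderable and bi-orderability is preserved under subgroups and inverse limits), $\LO(G)$ is nonempty, so Koberda's statement is substantive in our setting.

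The key preliminary step is to verify that the embedding $i:\LO(G) \into \CO(G)$ of \S\ref{ss:subspaces} is $\Aut(G)$-equivariant. Define the natural left action of $\Aut(G)$ on $\LO(G)$ by declaring $a\,(\rho \cdot <)\,b$ to hold iff $\rho^{-1}(a) < \rho^{-1}(b)$; a routine check using the left-invariance of $<$ shows that $\rho \cdot <$ is again left-invariant, and this defines a continuous action on Sikora's space $\LO(G)$ which is exactly the one Koberda considers. Then unraveling the cocycle formula defining $c_<$ together with the $\Aut(G)$-action from Lemma \ref{lem:autactiondef} gives
\[
(\rho \cdot c_<)(x,y,z) = c_<(\rho^{-1}(x),\rho^{-1}(y),\rho^{-1}(z)) = c_{\rho \cdot <}(x,y,z),
\]
so $i(\rho \cdot <) = \rho \cdot i(<)$.

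With equivariance established, suppose $\rho \in \Aut(G)$ lies in the kernel of $\Aut(G) \to \Homeo(\CO(G))$. Then for every $< \in \LO(G)$, equivariance and injectivity of $i$ force $\rho \cdot < = <$, so $\rho$ acts trivially on $\LO(G)$. Koberda's theorem now yields $\rho = \Id$.

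The main obstacle is essentially packaged inside Koberda's theorem; the genuinely new work amounts only to the equivariance check and the observation that a residually torsion-free nilpotent group admits left orders at all. In this sense the proposition is a corollary of Koberda's theorem together with Proposition \ref{prop:LOembeds}, and the same template will apply to any class of groups for which the $\Aut$-action on $\LO$ is known to be faithful.
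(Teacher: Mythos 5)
Your proposal is correct and follows the same route as the paper: reduce to Koberda's theorem that $\Aut(G)$ acts faithfully on $\LO(G)$ and use that $\LO(G)$ sits inside $\CO(G)$ as an invariant subspace. The paper states this in two lines; you merely make explicit the equivariance of the embedding $i$ and the nonemptiness of $\LO(G)$, which the paper leaves implicit.
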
 

\begin{proof} 
Theorem 1.1 of \cite{Koberda11} states that the action $\Aut(G)$ on $\LO(G)$ is faithful, given such a $G$.   Since $\LO(G)$ is an invariant subset of $\CO(G)$, the result follows.
\end{proof} 

Observe that Koberda's theorem provides a criterion for left-orderability.  Indeed, for any residually torsion-free nilpotent group $G$ with non-trivial $\Aut(G)$, it follows that $G$ is left-orderable.  But, since our theorem is obtained as a corollary of Koberda's, it does not provide a similar criterion for genuine circular orderability.  In particular, it says nothing about whether $\Aut(G)$ acts on $\CO_g(G)$ faithfully, and fails for groups with nontrivial torsion.  In this direction, Corollary \ref{cor:abeliangenuinefaithful} of the next section shows that when $A$ is a finitely generated abelian group that is circularly orderable, $\Aut(A)$ acts faithfully on $\CO_g(A)$.

%%%%%%%%%%%%
%%%%%%%%%%%%
\section{Finitely generated abelian groups}\label{sec:abeliangroups}
In this section we will describe the space of circular orders $\CO(A)$ of a finitely generated abelian group $A$.  We assume the torsion subgroup of $A$ is cyclic, since otherwise $A$ is not circularly orderable.

\begin{lem}\label{lem:torsionandCO}
Let $G$ be a circularly orderable group with finite torsion subgroup $T \leq G$. Then $T$ is a finite cyclic group.
\end{lem}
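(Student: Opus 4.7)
The plan is to reduce the lemma to the classical fact that every finite subgroup of $\Homeop(S^1)$ is cyclic. Proposition~\ref{prop:faithfulactioniffCO} converts the circular order on $G$ into a faithful orientation-preserving action $G \into \Homeop(S^1)$, so after restricting to $T$ it suffices to prove that any finite subgroup of $\Homeop(S^1)$ is cyclic.

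My first step will be to show that no non-identity element of $T$ can have a fixed point on $S^1$. If $h \in T$ satisfies $h(p) = p$, then $h$ restricts to an orientation-preserving self-homeomorphism of the open interval $S^1 \setminus \{p\} \cong \RR$. For such an $h$, monotonicity forces $h^n(x) > x$ (respectively $h^n(x) < x$) for all $n \geq 1$ whenever $h(x) > x$ (respectively $h(x) < x$), which is incompatible with $h$ having finite order unless $h = \Id$. This yields a free action of $T$ on $S^1$.

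Once freeness is in hand, I would pick any $p \in S^1$ and consider the orbit $T \cdot p$, which has exactly $|T|$ points. The cyclic order on $S^1$ restricts to a cyclic order on this orbit, and the $T$-action preserves it. The group of cyclic-order-preserving permutations of an $n$-point cyclically ordered set is cyclic of order $n$, so the restriction of the $T$-action to $T \cdot p$ defines a homomorphism $T \to \ZZ/|T|\ZZ$. Freeness on the orbit makes this homomorphism injective, exhibiting $T$ as a subgroup of a cyclic group, and hence cyclic.

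I expect the main technical step to be the fixed-point argument for finite-order elements; once it is in place, the orbit-to-cyclic-group reasoning is routine. An alternative route would be to invoke the theory of rotation numbers (which, for a finite subgroup carrying an invariant probability measure obtained by averaging, becomes a homomorphism $T \to \RR/\ZZ$), but the direct orbit argument avoids any extra machinery.
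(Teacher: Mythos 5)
Your proof is correct, and its final step is the same as the paper's: both arguments reduce to the fact that the group of cyclic-order-preserving permutations of an $n$-point cyclically ordered set is $\ZZ/n\ZZ$, applied to a finite orbit of $T$ on $S^1$. The one genuine difference is how injectivity of the resulting map $T \to \ZZ/n\ZZ$ is obtained. The paper restricts the circular order to $T$ and takes the dynamical realization of that restriction, which comes equipped with a trivially stabilized point $p_0$; injectivity on the orbit of $p_0$ is then immediate and no freeness argument is needed. You instead start from an arbitrary faithful action and first prove the classical fact that a finite-order, orientation-preserving homeomorphism of $S^1$ with a fixed point must be the identity (via the monotonicity argument on $S^1\setminus\{p\}\cong\RR$), so that $T$ acts freely and \emph{every} orbit works. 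Your route is slightly longer but more robust and more general: it shows that any finite subgroup of $\Homeop(S^1)$ is cyclic, independent of any choice of marked point, whereas the paper's argument is tailored to the dynamical realization. Both are complete proofs of the lemma.
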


\begin{proof}
Any circular order on $G$ restricts to a circular order on $T$.  Thus it suffices to show that the only circularly orderable finite groups are cyclic groups.  The circular order on $G$ restricts to yield an action of $T$ on $S^1$ with some trivially stabilized point $p_0 \in S^1$.  Let $\calo=\{p_0,\dots,p_{n-1}\}$ be the orbit of $p_0$ under $T$. The order structure on $T$ is encoded in its order-preserving action on this finite circularly ordered set $\calo$.  We can think of $\calo$ as the oriented cyclic digraph $C_n$, and the action of $T$ is by digraph automorphisms.  But the automorphism group of $C_n$ is isomorphic to $\mathbb{Z}/n\mathbb{Z}$.  Thus $T$ injects into $\mathbb{Z}/n\mathbb{Z}$.  Since $\calo$ is the orbit of $p_0$ under $T$, this map is surjective as well, so we conclude $T \simeq \mathbb{Z}/n\mathbb{Z}$.
\end{proof}

In contrast to this proposition, results in the next subsection imply that every finitely generated abelian group with cyclic torsion subgroup is circularly orderable.  By abuse of language, such a group will be called a \emph{circularly-orderable abelian group}.

We remark that the assumption of the lemma that the torsion subgroup $T$ be finite is necessary.  Indeed, consider the Pr\"ufer group
$$T = \lim_{n\rightarrow \infty} \ZZ/p^n\ZZ.$$
This is a torsion group with a natural rotation action on $S^1$.  In particular, $T$ is circularly orderable.  As for finitely generated, infinite torsion groups--so called \emph{Burnside groups}--the conclusion of the lemma is also known to hold \cite{Navas11}. In general, the problem of deciding when the homeomorphism group of a manifold has the Burnside property is open, although recent progress has been made in the case of compact surfaces \cite{GuelmanLiousse14}.

%%%%%
\subsection{Classifying elements of $\CO(A)$}\label{ss:abelianclassification}
For the rest of the section, $A$ will denote a circularly-orderable abelian group. The first step in our description of $\CO(A)$ is to use two known results to construct a recursive classification of the elements.  For background and development of both these results, the reader is referred to Ghys's highly readable paper \cite{Ghys01}.  The first result goes back to Poincar\'e and involves a careful analysis of rotation numbers.

\begin{prop}[Proposition 5.6 of \cite{Ghys01}]\label{prop:Poincare}
Let $G$ be any subgroup of $\Homeop(S^1)$. Then there are three mutually exclusive possibilities.\begin{enumerate}
\item There is a finite orbit.
\item All orbits are dense.
\item There is a compact $G$-invariant subset $C \subset S^1$ which is infinite and different from $S^1$ and such that the orbits of points in $C$ are dense in $C$. This set $C$ is unique, contained in the closure of any orbit and is homeomorphic to 
a Cantor set. $C$ is called the \emph{exceptional minimal set} of $G$.
\end{enumerate}
\end{prop}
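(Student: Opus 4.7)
The plan is to establish the trichotomy through a structural analysis of the minimal nonempty closed $G$-invariant subsets of $S^1$. Applying Zorn's lemma to such sets ordered by reverse inclusion produces at least one minimal closed $G$-invariant $M \subseteq S^1$; existence of an upper bound for any decreasing chain comes from compactness of $S^1$. The proof then splits on the nature of $M$.

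If $M$ is finite, it is itself a finite orbit, yielding possibility (1). Suppose instead $M$ is infinite. The Cantor-Bendixson derivative $M' \subseteq M$ of accumulation points is closed in $S^1$ and $G$-invariant (homeomorphisms preserve accumulation), so by minimality either $M' = \emptyset$, making $M$ discrete and hence finite by compactness (a contradiction), or $M' = M$ so $M$ is perfect. A parallel argument applied to $\partial M$, itself closed and $G$-invariant, forces either $\partial M = \emptyset$, in which case $M$ is clopen and so equals $S^1$ by connectedness of $S^1$, or $\partial M = M$, so $M$ has empty interior. If $M = S^1$, every orbit closure is a closed invariant set containing a minimal closed invariant set and therefore equals $S^1$, giving (2). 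Otherwise, $M$ is compact, perfect, and has empty interior; a closed subset of $S^1$ with empty interior has no nondegenerate connected components, so Brouwer's topological characterization identifies $M$ as a Cantor set, giving (3) with $C = M$.

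The remaining content is uniqueness of $C$ and density of $C$ in every orbit closure. I would handle both via the single observation that $\overline{G \cdot x} \cap C$ is closed and $G$-invariant inside $C$, so minimality forces it to be empty or all of $C$. When $x \in C$ the intersection is obviously nonempty. When $x$ lies in a component $I$ of $S^1 \setminus C$, the $G$-orbit of $I$ consists of pairwise disjoint open arcs of $S^1$; if this orbit is infinite, total-length considerations force the diameters of $g_n \cdot I$ to tend to zero along a subsequence, so $g_n \cdot x$ and the enclosing endpoints (which lie in $C$) share a common limit in $C$. Consequently $C \subseteq \overline{G \cdot x}$, which simultaneously gives uniqueness: any other minimal set $N$ would contain $C$ by choosing $x \in N$, and minimality of $N$ would then force $N = C$.

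The main obstacle is the residual subcase in which the $G$-orbit of $I$ is finite. Then the stabilizer of $I$ has finite index in $G$ and acts on $\overline I$ by orientation-preserving homeomorphisms fixing the endpoints. One must argue that either this stabilizer action on $I$ produces orbit points accumulating at an endpoint of $I$ (again placing a limit point in $C \subseteq \overline{G \cdot x}$), or else every orbit of $G$ inside the finite union $G \cdot I$ is finite, contradicting the standing assumption in case (3) that no finite orbits exist. Once this subcase is resolved, mutual exclusivity is automatic, since each possibility is encoded by the homeomorphism type of the unique minimal set: finite, $S^1$, or Cantor.
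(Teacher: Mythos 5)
The paper does not prove this proposition; it is quoted verbatim from Ghys (Proposition 5.6 of \cite{Ghys01}), so there is no in-paper argument to compare against. Your overall route --- Zorn's lemma to produce a minimal closed invariant set $M$, the Cantor--Bendixson and boundary dichotomies forcing $M$ to be finite, all of $S^1$, or perfect with empty interior (hence Cantor by Brouwer), and then $C \subseteq \overline{G\cdot x}$ for every $x$ to get uniqueness and mutual exclusivity --- is the standard proof and is essentially what Ghys does. Most of it is correct as written.

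The one genuine gap is the subcase you flag yourself: a complementary arc $I$ of $C$ whose $G$-orbit (as a collection of arcs) is finite. Your proposed resolution does not work. There is no ``standing assumption in case (3) that no finite orbits exist'' --- in your setup you only chose \emph{one} minimal set via Zorn and assumed \emph{it} is infinite and proper; nothing yet rules out a finite orbit elsewhere, and indeed ruling it out is precisely what this step is supposed to accomplish. Moreover the dichotomy you propose for the stabilizer acting on $\overline{I}$ (either some orbit accumulates at an endpoint of $I$, or every orbit in $G\cdot I$ is finite) is false for interval homeomorphisms fixing the endpoints: an orbit can be infinite yet accumulate only at an \emph{interior} fixed point of $I$, never approaching $\partial I$. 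Fortunately the subcase is vacuous for a much simpler reason. If the orbit of $I$ is the finite collection $\{I_1,\dots,I_k\}$, then the set $E$ of endpoints of the $I_j$ is a finite, nonempty, closed subset of $C$ (endpoints of complementary components lie in $C$), and it is $G$-invariant because $G$ permutes the $I_j$ and carries endpoints to endpoints. Minimality of $C$ then forces $E = C$, contradicting the fact that $C$ is infinite. With that observation the finite-orbit-of-$I$ case never occurs, your total-length argument handles the remaining case, and the rest of your proof (uniqueness of the minimal set, hence exclusivity of the three possibilities) goes through.
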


In both cases 1 or 3, we associate to $G$ the stabilizer subgroup $K$ of the finite orbit or the Cantor set $C$, respectively.  We call $K$ the \emph{blowdown kernel}.

In case 3, we can blow down the gaps of $S^1 \setminus C$ to get an action of type 2.  That is, we construct a quotient of $S^1$ so that the closure of each maximal, connected, open interval of $S^1 \setminus C$ is replaced by a point. The resulting space is homeomorphic to $S^1$, and the fact that the action is of type 2 follows from

\begin{prop}[Proposition 5.8 of \cite{Ghys01}]
Let $G$ be a group and $r: G \to \Homeop(S^1)$ a homomorphism such that $r(G)$ has an exceptional minimal set $K$. Then there is a homomorphism $\overline{r}: G \to \Homeop(S^1)$ such that $r$ is semi-conjugate to $\overline{r}$ and $\overline{r}(G)$ has dense orbits on the circle. \qed
\end{prop}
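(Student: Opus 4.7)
The plan is to build $\overline{r}$ as the induced action on the quotient of $S^1$ obtained by collapsing the gaps of the exceptional minimal set $K$ to points. Since $K$ is a Cantor subset of $S^1$, the complement $S^1 \setminus K$ decomposes as a countable disjoint union of open intervals $\{I_n\}_{n \in \NN}$, which I call the \emph{gaps}. Define an equivalence relation $\sim$ on $S^1$ by declaring $x \sim y$ if and only if $x$ and $y$ both lie in $\overline{I_n}$ for some common $n$; the equivalence classes are the closed gap-arcs $\overline{I_n}$ together with the singletons $\{x\}$ for $x \in K$ that are not endpoints of any gap. Let $q \colon S^1 \to S^1/{\sim}$ denote the quotient map.

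The first substantive step is to check that $S^1/{\sim}$ is homeomorphic to $S^1$. Each equivalence class is a closed connected subarc of $S^1$, and one verifies that the relation is itself a closed subset of $S^1 \times S^1$: because the $I_n$ are pairwise disjoint open arcs of a compact space, their lengths must tend to zero, so any sequence of related pairs lying in distinct $\overline{I_n} \times \overline{I_n}$ can only accumulate on the diagonal. Thus the quotient is compact and Hausdorff, and the cyclic order on $S^1$ descends to a well-defined cyclic order on $S^1/{\sim}$. One then invokes the standard point-set fact that a monotone closed quotient of $S^1$ with more than one point is homeomorphic to $S^1$; the quotient certainly has infinitely many points because $K$ itself is uncountable and $q|_K$ identifies only countably many pairs of endpoints.

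Next, since each $r(g)$ preserves $K$ as an orientation-preserving homeomorphism of $S^1$, it permutes the set of gaps $\{I_n\}$ bijectively, and in particular respects $\sim$. Therefore $r(g)$ descends to a well-defined homeomorphism $\overline{r}(g)$ of $S^1/{\sim}$ satisfying the intertwining equation $q \circ r(g) = \overline{r}(g) \circ q$. This one equation simultaneously shows that $g \mapsto \overline{r}(g)$ is a group homomorphism (because $q$ is surjective and $r$ is one) and that $q$ realizes $r$ as semi-conjugate to $\overline{r}$ in the sense required.

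Finally, density of the $\overline{r}(G)$-orbits on $S^1/{\sim}$ comes essentially for free from the defining property of an exceptional minimal set. Every equivalence class meets $K$---either because it equals $\{x\}$ for some $x \in K$, or because it is a closed gap $\overline{I_n}$ whose endpoints lie in $K$---so the restriction $q|_K \colon K \to S^1/{\sim}$ is surjective. By Proposition 5.6 of \cite{Ghys01}, the $r(G)$-orbit of any point of $K$ is dense in $K$, and continuity together with surjectivity of $q|_K$ transport this density to $S^1/{\sim}$. The main technical obstacle is the first step: confirming that $S^1/{\sim}$ is really a topological circle, which rests on verifying that $\sim$ is a closed equivalence relation and invoking the classical characterization of monotone quotients of $S^1$. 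Once that is in hand, the remaining steps are essentially bookkeeping.
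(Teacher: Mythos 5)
Your proof is correct and follows exactly the approach the paper has in mind: the paper states this result as a citation of Ghys (Proposition 5.8) and, in the preceding paragraph, sketches precisely your construction of collapsing the closure of each gap of $S^1 \setminus K$ to a point. You have simply supplied the details (closedness of the equivalence relation via the gap lengths tending to zero, the monotone-quotient characterization of $S^1$, and the transport of minimality of $K$ through the surjection $q|_K$) that the paper delegates to the reference.
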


We call a subgroup $G$ of $\Homeop(S^1)$ \emph{minimal} if all of its orbits are dense.  Similarly, we say a circular order is minimal if its dynamical realization is minimal.  Let $\COmin(G)$ denote the set of minimal circular orders of $G$, let $\COfin(G)$ be the set of circular orders of $G$ with a finite orbit, and let $\COb(G)$ be the set of circular orders of type 3.

The second result we exploit is a sort of Tits alternative due to Margulis \cite{Margulis00}.

\begin{thm}[\cite{Margulis00}, see also Corollary 5.15 of \cite{Ghys01}]\label{thm:Tits}
Let $G$ be a subgroup of $\Homeop(S^1)$ such that all orbits are dense in the circle. Exactly one of the following properties holds:
\begin{enumerate}
\item $G$ contains a non abelian free subgroup.
\item $G$ is abelian and is conjugate to a group of rotations.
\end{enumerate}
\end{thm}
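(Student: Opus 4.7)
The plan is to follow the classical Margulis strategy, which breaks the proof into three pieces: establish a measure-theoretic dichotomy (invariant probability measure versus rank-two free subgroup), use minimality to upgrade the measure, and finally construct an explicit conjugacy to a group of rotations.

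The first and hardest step is the Margulis dichotomy itself: any subgroup $G\leq \Homeop(S^1)$ either preserves a Borel probability measure on $S^1$, or contains a non-abelian free subgroup. I would argue by contradiction. Consider the action of $G$ on the weak-$*$ compact convex space $\mathcal{M}(S^1)$ of Borel probability measures on $S^1$. Since a fixed point would give a $G$-invariant probability measure, the hypothesis lets one extract a sequence $g_n\in G$ and a measure $\mu$ for which $(g_n)_*\mu$ concentrates near a Dirac mass, yielding a ``contracting'' element. Iterating the construction and arranging attractors and repellers to be disjoint produces two such contracting elements that play ping-pong on $S^1$ (or, more precisely, on $\mathcal{M}(S^1)$), from which one obtains a free subgroup of rank two. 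This is the technical heart of the theorem and where the main obstacle lies.

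The second step is short. Assume $G$ acts minimally (all orbits dense) and preserves a probability measure $\mu$. The support of $\mu$ is a closed $G$-invariant subset of $S^1$, so minimality forces $\operatorname{supp}(\mu)=S^1$. For any $\epsilon>0$ the set of atoms of $\mu$ of mass at least $\epsilon$ is finite and $G$-invariant; if nonempty it would constitute a finite orbit, contradicting minimality. Hence $\mu$ has full support and no atoms.

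For the final step, fix a basepoint $x_0\in S^1$ and define $h:S^1\to S^1$ by $h(x)=\mu([x_0,x])\pmod 1$. Since $\mu$ has full support and no atoms, $h$ is an orientation-preserving homeomorphism, and by construction $h_*\mu$ is Lebesgue measure $\lambda$. For each $g\in G$ the conjugate $hgh^{-1}$ is an orientation-preserving homeomorphism of $S^1$ preserving $\lambda$, and a standard argument (using that the cumulative distribution function of $\lambda$ is the identity) shows such a map must be a rotation. Therefore $hGh^{-1}\leq \operatorname{Rot}(S^1)\cong S^1$, so $G$ is abelian and conjugate to a group of rotations. The two alternatives are mutually exclusive, since a group of rotations is abelian and contains no non-abelian free subgroup.
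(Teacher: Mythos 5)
The paper does not actually prove this statement: it is imported as Margulis's theorem via Corollary 5.15 of Ghys's survey and used as a black box in the classification of $\COmin(\ZZ^n)$. So the comparison here is with the standard published proof rather than with anything in the paper. Your steps 2 and 3 are correct and match exactly how the corollary is deduced from Margulis's measure dichotomy: minimality forces any invariant probability measure $\mu$ to have full support and no atoms (a nonempty finite invariant set would contain a finite, hence non-dense, orbit), and $x\mapsto \mu([x_0,x]) \bmod 1$ is then an orientation-preserving homeomorphism pushing $\mu$ to Lebesgue measure, so the conjugated group consists of Lebesgue-preserving elements of $\Homeop(S^1)$, i.e.\ rotations. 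Mutual exclusivity is also handled correctly.

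The gap is in step 1, the dichotomy ``invariant probability measure or nonabelian free subgroup,'' which is the entire content of Margulis's theorem. The inference ``no fixed point in $\mathcal{M}(S^1)$, hence there is a sequence $g_n$ with $(g_n)_*\mu$ concentrating near a Dirac mass'' is not valid as stated: the absence of a fixed point for the affine action on the compact convex set $\mathcal{M}(S^1)$ does not by itself produce concentration, since the orbit closure of $\mu$ could a priori stay away from the Dirac masses entirely. The actual proof must first establish a dynamical dichotomy --- either the action is equicontinuous, in which case an averaging/Arzel\`{a}--Ascoli argument yields an invariant measure (indeed a conjugacy to rotations), or there exist contractible intervals, i.e.\ nondegenerate arcs $I$ and sequences $g_n$ with the length of $g_n(I)$ tending to $0$ --- and then cover the circle by finitely many such arcs and carefully arrange two elements with disjoint attracting and repelling data to run ping-pong; this last step contains a genuine subtlety requiring a passage to a finite cover of the circle when the contraction data has degree greater than one. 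You correctly flag this as the technical heart, but as written it is an outline of a strategy rather than a proof. Everything downstream of the dichotomy is fine.
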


Finally, a piece of notation.  Let $\TT^n = S^1 \times \dots \times S^1$ denote the $n$-torus, where we identify the circle $S^1 = [0,1]/\{0=1\}$ with the unit interval with endpoints glued together.  An \emph{irrational point} on the torus $\TT^n$ is a point such that each coordinate is irrational.  A \emph{totally irrational point} is an irrational point such that all of the coordinates are pairwise noncommensurable, i.e.\ their ratios are not rational.  We denote the set of totally irrational points on $\TT^n$ by $T_n$.

We are now well positioned to classify the set of circular orders on $\ZZ^n$.

\begin{thm}\label{thm:freeabelianCO}
The set $\CO(\ZZ^n)$ of left invariant circular orders on $\ZZ^n$ is a disjoint union of $\COfin(\ZZ^n)$, $\COmin(\ZZ^n)$ and $\COb(\ZZ^n)$, with
$$\begin{aligned}
\COfin(\ZZ^n) &= \bigsqcup_{\substack{K \leq \ZZ^n \\ \rank K = n}}\left[\LO(K) \times \CO(\ZZ^n /K)\right], \\
\COb(\ZZ^n) &=  \bigsqcup_{\substack{K\leq \ZZ^n\\ \rank K < n}}\left[ \LO(K) \times \CO(\ZZ^n/K) \right] \text{, \ \ and}\\
\COmin(\ZZ^n) &= T_n	.\end{aligned}$$
In particular,
$$\CO_g(\ZZ^n) = \bigsqcup_{\substack{K < \ZZ^n \\ K \neq \ZZ^n \\ \rank K = n}}\left[\LO(K) \times \CO(\ZZ^n /K)\right] \cup \COmin(\ZZ^n) \cup \COb(\ZZ^n).$$
\end{thm}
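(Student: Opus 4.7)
The plan is to use Poincar\'e's trichotomy (Proposition~\ref{prop:Poincare}) applied to the dynamical realization $r_c : \ZZ^n \to \Homeop(S^1)$ of each $c \in \CO(\ZZ^n)$ to partition $\CO(\ZZ^n)$ into the three pieces $\COfin(\ZZ^n)$, $\COmin(\ZZ^n)$, and $\COb(\ZZ^n)$. After establishing the partition (which is automatic from the trichotomy), I would identify each piece with its claimed description by extracting the blowdown kernel $K$, a linear order on $K$, and an induced circular order on the quotient $\ZZ^n/K$. Throughout, I will use that by construction the marked point $p = i(e)$ for $r_c$ has trivial stabilizer, so $c$ can be recovered from the cyclic orientations of orbit triples $(r_c(a)p, r_c(b)p, r_c(c)p)$.

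The minimal case $\COmin(\ZZ^n)$ is the quickest: Margulis's Tits alternative (Theorem~\ref{thm:Tits}) implies that a minimal abelian subgroup of $\Homeop(S^1)$ is conjugate to a group of rotations, so $r_c$ is determined up to conjugacy by the rotation angles $(\theta_1,\ldots,\theta_n) \in \TT^n$ attached to the standard generators of $\ZZ^n$. Faithfulness of $r_c$ rules out any nontrivial integer relation $a_0 + \sum a_i \theta_i \in \ZZ$, so $1,\theta_1,\ldots,\theta_n$ are $\QQ$-linearly independent, which is the defining condition for $(\theta_1,\ldots,\theta_n) \in T_n$; conversely, any totally irrational tuple produces a faithful minimal rotation action, and reading off the cyclic order on the orbit of any point yields the corresponding element of $\COmin(\ZZ^n)$.

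For the other two pieces I will proceed in parallel. In the $\COfin$ case I take $K$ to be the stabilizer of any point in the finite orbit: it has finite index, hence rank $n$, and $\ZZ^n/K$ acts faithfully on the finite orbit to yield an element of $\CO(\ZZ^n/K)$. In the $\COb$ case I first blow down the exceptional minimal set to a minimal action $\bar r_c$, which Margulis again identifies as a rotation action; I then let $K$ be the kernel of $\bar r_c$, so that $\ZZ^n/K$ acts faithfully, minimally and by rotations, forcing $\rank K < n$ since the quotient must have positive rank, and recovering the circular order factor as an element of $\COmin(\ZZ^n/K) \subset \CO(\ZZ^n/K)$. In both cases $K$ acts on each open gap (between adjacent points of the finite orbit, or of the exceptional minimal set) by orientation-preserving homeomorphisms with trivial gap-stabilizer, so H\"older's theorem (as invoked after Proposition~\ref{prop:faithfulactioniffLO}) semi-conjugates this action to a group of translations and yields a left-invariant linear order in $\LO(K)$. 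To reverse the decomposition, given a triple $(K,<,c')$ I would glue $|\ZZ^n/K|$ (respectively, countably many) copies of the linear dynamical realization of $<$ into a cyclically or Cantor-like ordered collection of arcs arranged according to $c'$, and verify that the resulting $\ZZ^n$-action on $S^1$ has dynamical type matching the claimed stratum.

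The main obstacle I anticipate is verifying well-definedness of the linear-order extraction --- namely, that the linear order on $K$ is independent of the gap chosen and of the semi-conjugacy ambiguity in H\"older's theorem. In the abelian setting this should reduce to using that $\ZZ^n$ permutes gaps transitively (since $\ZZ^n/K$ acts cyclically or minimally on the finite-orbit or Cantor quotient) and that left multiplication by $\ZZ^n$ intertwines the $K$-actions on different gaps, so the positive cone in $K$ is the same regardless of the gap used. The final statement about $\CO_g(\ZZ^n)$ then follows from the observation that linear orders on $\ZZ^n$ correspond precisely to the $K = \ZZ^n$ stratum of $\COfin$: compactifying the $\RR$-action of a linear order (Proposition~\ref{prop:faithfulactioniffLO}) produces an $S^1$-action with $\infty$ as a fixed point, so the finite orbit has size one and the stabilizer is all of $\ZZ^n$. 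Removing this single stratum from $\COfin$ leaves exactly $\CO_g(\ZZ^n)$.
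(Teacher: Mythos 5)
Your proposal is correct and follows essentially the same route as the paper: Poincar\'e's trichotomy gives the partition, Margulis's theorem handles the minimal stratum via totally irrational rotation vectors, and the blowdown kernel $K$ together with a linear order on $K$ and a circular order on $\ZZ^n/K$ parametrizes the other two strata, with the inverse given by the same gluing/blow-up construction. The only cosmetic difference is that the paper extracts the linear order on $K$ directly from the orbit of the trivially stabilized marked point inside its gap (which sidesteps both your gap-independence worry and the appeal to H\"older's theorem, whose freeness hypothesis the $K$-action on a whole gap need not satisfy), rather than via semi-conjugacy to translations.
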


The reader will notice that we have referred to $\CO(\ZZ^n/K)$ without describing it.  Of course, for $K$ full rank, $\ZZ^n/K$ is finite, and hence $\CO(\ZZ^n/K)$ is finite (possibly empty), consisting of ``rotation orders."  For $\rank(K) < n$, $\CO(\ZZ^n/K)$ will be empty if the quotient has a noncyclic torsion subgroup; otherwise, $\ZZ^n/K = \ZZ^{n-\rank K} \times \ZZ/m$ for some integer $m$, in which case we give a description of $\CO(\ZZ^n/K)$ in the next theorem. So by combining Theorems \ref{thm:freeabelianCO} and \ref{thm:abelianCO}, we will in fact have a kind of recursive classification.

\begin{proof}
The first statement, that $\CO(\ZZ^n)$ is a disjoint union of $\COfin,$ $\COmin$ and $\COb$ follows from the definitions and the fact that the three types of subgroups of Proposition \ref{prop:Poincare} are mutually exclusive.  For the three middle statements, we will analyze the possiblities for a dynamical realization $r_c$ of a circular order $c$ on $\ZZ^n$.  By Proposition \ref{prop:Poincare}, there are three cases to consider.

\begin{enumerate}
\item First suppose $r_c$ has a finite orbit $\mathcal{O}=\{p_1,\dots,p_m\}$ with $p_m < p_1 < \dots < p_m$.  Then there is an induced order preserving action $\sigma$ of $\ZZ^n$ on $\mathcal{O}$.  This is equivalent to the homomorphism $\sigma: \ZZ^n \to \ZZ/m$ where the kernel is
$$K:=\ker\sigma = \bigcap_{i=1}^m \Stab_{\ZZ^n}(p_i),$$
\ie the blowdown kernel.  Since $K$ fixes all of the $p_i$ and consists of order preserving maps, $K$ maps the intervals $I_1,\dots,I_n$ of $S^1 \setminus\mathcal{O}$ to themselves.  Our action is a dynamical realization action, so there is a marked trivially stabilized point $q$ in one of the $I_i$ such that the order structure of $q$ is $c$. Now consider the restricted action of $K$ on $I_i \simeq \RR$.  The orbit structure of $q$ under $K$ in $I_i$ induces a linear order on $K$, since $I_i$ maps to itself by $K$.  Since $\ZZ^n/K \simeq \ZZ/m$, $K$ must be full rank, i.e.\ $K\simeq \ZZ^n$.  This construction yields a map
$$\COfin(\ZZ^n) \to \bigsqcup_{\substack{K \leq \ZZ^n \\ \rank K = n}}\left[\LO(K) \times \CO(\ZZ^n /K)\right].$$

\

Inversely, for any full rank subgroup $K < \ZZ^n$ of index $m$, we can intertwine a linear order on $K$ with a circular order on $\ZZ^n/K$.  More specifically, let $\ZZ^n/K \simeq \ZZ/m$ act by the rotation action indicated by the circular order, pick a point $p$, replace each point in the orbit of $p$ under $\ZZ/m$ with identical blown-up intervals, and let $K$ act simultaneously and identically on these $m$ intervals $I_1,\dots,I_m$ by the dynamical realization of the linear order on $K$, with marked trivially stabilized point $q$.  Since $\Homeop(\RR)$ is divisible, we can extend the $K$ action on $S^1$ to an action of $\ZZ^n$ by rotating $S^1$ and acting inside the blow-up intervals accordingly.  Clearly the order structure of $q$ is inverse to the construction of the previous paragraph, so we have established a bijection between orders with finite orbits and
$$\bigsqcup_{\substack{K \leq \ZZ^n \\ \rank K = n}}\left[\LO(K) \times \CO(\ZZ^n /K)\right].$$

\item Now suppose $r_c$ is minimal. Theorem \ref{thm:Tits} implies the image of $\ZZ^n$ in $\Homeop(S^1)$ is conjugate to a group of rotations.  Since $r_c$ is an injection, we need the generators of $\ZZ^n$ to map to irrational rotations that are all pairwise incommensurable.  Of course, if the collection of images of the generators are distinct, the circular orders are distinct. Conversely, any assignment of incommensurable irrational rotations to the standard generators of $\ZZ^n$ yields an injection into $\Homeop(S^1)$ with trivial stabilizers.  This explains the component of $\CO(\ZZ^n)$ parametrized by $T_n$.

\item Finally, suppose $r_c$ is of type 3, so there is an invariant Cantor set $C$.  Let $q$ be the marked trivially stabilized point of the dynamical realization.  Blow $r_c$ down to $C$ via $b$ to get a new action $r = b \circ r_c$, which may no longer be faithful.  The blowdown kernel $K$ stabilizes $b(q)$, hence $K$ acts on the interval $b^{-1}(b(q))$ in a way such that $q$ is trivially stabilized.  Thus $c$ restricts to a linear order on $K$.  Of course, $\ZZ^n/K$ acts faithfully on $S^1$ via $r$ and trivially stabilizes $b(q)$.  Thus we get a circular order on $\ZZ^n/K$.  Moreover,  Proposition \ref{prop:Poincare} implies all $r$ orbits are dense in $b(S^1)=S^1$, so the action of $\ZZ^n/K$ on $S^1$ is minimal.  In particular, we conclude $\rank K < n$.  Thus every type 3 circular order of $\ZZ^n$ yields a rank subgroup $K$ not of full rank, together with a linear order on $K$ and a circular order on the quotient.  Moreover, given such data, by the same procedure in the case of a finite orbit, we can construct a circular order on $\ZZ^n$ inverse to this.
\end{enumerate}

The last statement about the genuine orders is clear.
\end{proof}

The arguments given in the proof of Theorem \ref{thm:freeabelianCO} to classify $\COfin(\ZZ^n)$ and $\COb(\ZZ^n)$ are fairly general.  As such, there are clear generalizations of these descriptions for arbitrary groups.  On the other hand, the classification of $\COmin(\ZZ^n)$ is highly dependent on Margulis's Theorem \ref{thm:Tits} and the fact that our group is abelian.  We see that by applying this theorem to $\ZZ^n \times \ZZ/m$ we can similarly classify $\CO(\ZZ^n \times \ZZ/m)$.

\begin{thm}\label{thm:abelianCO}
The set $\CO(\ZZ^n \times \ZZ/m)$ of left invariant circular orders on $\ZZ^n \times \ZZ/m$ is a disjoint union of $\COfin(\ZZ^n \times \ZZ/m)$, $\COmin(\ZZ^n \times \ZZ/m)$ and $\COb(\ZZ^n \times \ZZ/m)$, with
$$\begin{aligned}
\COfin(\ZZ^n \times \ZZ/m) &= \bigsqcup_{\substack{K \leq \ZZ^n \times \ZZ/m\\ \rank K = n}}\left[\LO(K) \times \CO[(\ZZ^n \times \ZZ/m) /K]\right], \\
\COb(\ZZ^n \times \ZZ/m) &=  \bigsqcup_{\substack{K\leq \ZZ^n \times \ZZ/m\\ \rank K < n}}\left[ \LO(K) \times \CO[(\ZZ^n \times \ZZ/m)/K] \right] \text{, \ \ and}\\
\COmin(\ZZ^n \times \ZZ/m) &= T_n \times \CO(\ZZ/m).	\end{aligned}$$
Since $\ZZ^n \times \ZZ/m$ has torsion, $\LO(\ZZ^n \times \ZZ/m) = \emptyset$, \ie $\CO_g(\ZZ^n \times \ZZ/m) = \CO(\ZZ^n \times \ZZ/m)$. \qed
\end{thm}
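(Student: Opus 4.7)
The plan is to retrace the proof of Theorem \ref{thm:freeabelianCO} essentially verbatim, with the torsion summand $\ZZ/m$ impacting only the minimal case in a substantive way. Proposition \ref{prop:Poincare} applied to the dynamical realization $r_c$ of a circular order $c$ immediately gives the three-way disjoint decomposition $\CO(\ZZ^n \times \ZZ/m) = \COfin \sqcup \COmin \sqcup \COb$, so the content lies in identifying each piece.

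For the $\COfin$ and $\COb$ pieces, I would reuse the argument from the proof of Theorem \ref{thm:freeabelianCO} almost word-for-word. Given $c$ with a finite orbit (respectively an exceptional Cantor set), form the blowdown kernel $K$, restrict the action of $K$ to the gap (respectively blown-up interval) containing the marked point of the dynamical realization to obtain an element of $\LO(K)$, and read off a circular order on $(\ZZ^n \times \ZZ/m)/K$ from the action on the finite orbit (respectively the blown-down circle). In the finite-orbit case the quotient is a finite cyclic group, which forces $\rank K = n$; in the blow-up case the quotient acts minimally on $S^1$ and hence by Margulis (Theorem \ref{thm:Tits}) contains an irrational rotation, which forces $\rank K < n$. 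For the reverse direction, intertwine a linear order on $K$ with a circular rotation order on the quotient by replacing each point of the quotient orbit with identical blown-up intervals on which $K$ acts via its dynamical realization, using the divisibility of $\Homeop(\RR)$ exactly as in Theorem \ref{thm:freeabelianCO}.

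For the $\COmin$ piece, Theorem \ref{thm:Tits} says a faithful minimal abelian action on $S^1$ is conjugate to a group of rotations, so a minimal $c$ is determined by a faithful homomorphism $\ZZ^n \times \ZZ/m \to \Rot(S^1) \cong S^1$ together with the choice of marked point. The $\ZZ/m$ factor must map isomorphically onto a cyclic rotation subgroup of order exactly $m$; the choice of generator for that subgroup is parametrized precisely by $\CO(\ZZ/m)$. The $\ZZ^n$ generators independently determine a point of $\TT^n$, and faithfulness of the combined map translates to that point lying in $T_n$. Combining these independent choices yields $\COmin(\ZZ^n \times \ZZ/m) = T_n \times \CO(\ZZ/m)$. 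The final sentence of the theorem is immediate: $\ZZ/m$ sits inside $\ZZ^n \times \ZZ/m$ as a nontrivial finite subgroup, so the group has torsion and, by the reasoning given just before Lemma \ref{lem:torsionandCO} in the introduction, cannot be linearly orderable.

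The main obstacle is the minimal case: one must verify carefully that in the presence of both irrational rotations (from the $\ZZ^n$ factor) and rational rotations (from the $\ZZ/m$ factor), faithfulness of the combined rotation action decouples cleanly into the two independent conditions above, with no hidden commensurability constraint linking the $\ZZ^n$-coordinates to the order-$m$ rotation. The rest of the argument is a straightforward transcription of Theorem \ref{thm:freeabelianCO}, which is presumably why the authors judged the proof clear enough to leave it marked \qed.
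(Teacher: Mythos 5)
Your proposal matches the paper exactly: the authors give no separate proof of this theorem, justifying the \qed by the remark immediately preceding it that the $\COfin$ and $\COb$ arguments of Theorem \ref{thm:freeabelianCO} generalize directly and that Margulis's Theorem \ref{thm:Tits} applied to $\ZZ^n \times \ZZ/m$ handles the minimal case. Your filling-in of the decoupling of faithfulness in the rotation case (the $\ZZ/m$ generator contributing a primitive order-$m$ rotation independently of the totally irrational $\ZZ^n$-angles) is precisely the verification the authors leave implicit.
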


We should say a word about why we can sensibly consider Theorems \ref{thm:freeabelianCO} and \ref{thm:abelianCO} as classification theorems.  Indeed, by the previous remark and Margulis's theorem, we could give a superficially similar classification for $\CO(G)$ for any $G$ with no nonabelian free subgroups.  However, for arbitrary $G$, determining and describing possible blowdown kernels is a hard problem.  While we have admittedly not explicitly described the solution to this problem for a finitely generated abelian group $A$, it is clear that the blowdown kernels in this case are precisely subgroups $K \leq A$ such that $A/K$ is a free abelian group with cyclic torsion subgroup.  Furthermore, for arbitrary $G$ and blowdown kernel $K \leq G$, we do not necessarily have a good understanding of dense subsets of $\LO(K)$ or $\CO(G/K)$.  But for a finitely generated abelian group $A$, we do, which is the topic of the next subsection.

%%%%%
\subsection{Topology of $\CO(A)$}\label{ss:abeliantopology}
We now want to understand how $\COfin(A)$, $\COmin(A)$ and $\COb(A)$ relate to the topology of $\CO(A)$.  For convenience, we introduce new notation: $\COrot(A)$ consists of orders which have a dynamical realization that is a rotation action.  Thus, for $A = \ZZ^n \times \ZZ/m$ with $n>0$, $\COrot(A) = \COmin(A)$, and for $A$ with $\rank(A) = 0$, $\COrot(A) = \CO(A)$.  For $c \in \COrot(A)$, we will write $c = c_\theta$ for $\theta \in T_n$ if $A = \ZZ^N$ has no torsion, $c= c_k$  if $A = \ZZ/m$ and $c = c_{\theta,k}$ for $A = \ZZ^n \times \ZZ/m$.  Here $0\leq k <m$ is an integer coprime to $m$, and indicates that a fixed generator of $\ZZ/m$ (say, $\overline{1}$) acts by a rotation of angle $\frac{k}{m}$ (recall that our circle is $S^1 = [0,1]/\{0=1\}$).  By abuse of notation, even if $A$ has no rank or torsion, we may write $c = c_{\theta,k}$, in which case we simply ignore $\theta$ or $k$, respectively.

Our goal now is to show that $\COrot(A)$ is dense in $\CO(A)$.  We will conclude as a corollary that when $\rank(A) >0$ and the torsion subgroup of $A$ is cyclic, $\CO(A)$ is a Cantor set and $\LO(A)$ is not open.

First we isolate convenient dense subsets of $\COfin(A)$ and $\COb(A)$.  By the proof of Theorem \ref{thm:abelianCO}, we know for every $c \in \COfin(A) \cup \COb(A)$ there is a blowdown kernel $K \leq A$, a linear order on $K$ and a circular order on $A/K$ such that $c$ is constructed by intertwining the two.  Given such a $K$, we let $\LOtran(K)$ be the subspace of $\LO(K)$ consisting
of ``translation orders.'' More precisely, suppose $K \simeq \ZZ^k$ is rank $k$ and consider the set
$$S_{k-1} = \{(x_1,\dots,x_k)\in (\RR^\times)^k \mid \frac{x_i}{x_j} \notin \mathbb{Q} \ \forall i\neq j \} / \RR_+$$
where $\RR_+$ acts diagonally by scaling.  We say $c \in \LO(K)$ is a \emph{translation order} if it has a dynamical realization such that the $i^\text{th}$ standard generator (under some fixed identification $K = \ZZ^k$) acts by translation by a distance $x_i$.  $S_{k-1}$ could be loosely considered as the totally irrational points on the sphere $S^{k-1}$, hence is an $\LO$ analog to the totally irrational rotation circular orders. Notice we get the sphere $S^{k-1}$ and not $\mathbb{RP}^{k-1}$, since the action is by $\RR_+$ and not $\RR\setminus\{0\}$.

In \cite{Koberda11}, Koberda constructs a subspace of $\LO(\ZZ^n)$ we shall call $\LOflag(\ZZ^n)$, consisting of positive cones constructed from rational flags in $\ZZ^n$.  He then shows $\LOflag(\ZZ^n)$ is dense in $\LO(\ZZ^n)$.  The interested reader can easily check that the limit set of $\LOtran(\ZZ^n)$ in $\LO(\ZZ^n)$ contains $\LOflag(\ZZ^n)$.  The important fact for our purposes is that we can deduce

\begin{lem}\label{lem:LOtran}
$\LOtran(\ZZ^n)$ is dense in $\LO(\ZZ^n)$. \qed
\end{lem}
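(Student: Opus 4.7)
The plan is to bootstrap from Koberda's density result for the subspace $\LOflag(\ZZ^n) \subseteq \LO(\ZZ^n)$ of positive cones arising from rational flags. It suffices to prove the containment $\LOflag(\ZZ^n) \subseteq \overline{\LOtran(\ZZ^n)}$: then $\overline{\LOtran(\ZZ^n)}$ is a closed set containing the dense set $\LOflag(\ZZ^n)$, and therefore equals $\LO(\ZZ^n)$.

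First, I would fix a flag order. It arises from a full rational flag $0 = V_0 \subsetneq V_1 \subsetneq \cdots \subsetneq V_n = \QQ^n$ together with a sign $\sigma_i \in \{\pm 1\}$ on each one-dimensional quotient $V_i/V_{i-1}$. Choose a $\ZZ$-basis $(f_1, \ldots, f_n)$ of $\ZZ^n$ adapted to the flag, so that $f_1, \ldots, f_i$ span $V_i \cap \ZZ^n$ and $[f_i]$ is the positive generator of $V_i/V_{i-1}$. In these coordinates, writing $v = \sum_i a_i(v) f_i$, the flag order declares $v > 0$ iff the largest $i_0$ with $a_{i_0}(v) \neq 0$ has $a_{i_0}(v) > 0$.

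Next, given a basic open neighborhood of the flag order---corresponding to finitely many sign conditions on a finite set of vectors $v_1, \ldots, v_m \in \ZZ^n$---I would construct an explicit approximating sequence. For $k \in \NN$, choose positive real numbers $x_1^{(k)}, \ldots, x_n^{(k)}$ that are $\QQ$-linearly independent and satisfy $|x_i^{(k)} / x_{i+1}^{(k)}| \to 0$ as $k \to \infty$ (for instance, generic perturbations of $\epsilon_k^{n-i}$ with $\epsilon_k \to 0^+$). The homomorphism $\phi_k: \ZZ^n \to \RR$ with $\phi_k(f_i) = x_i^{(k)}$ is injective by $\QQ$-linear independence, and thus defines an element $<_k \in \LO(\ZZ^n)$ via its dynamical realization. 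For each $v_j = \sum_i a_i(v_j) f_i$, once $k$ is large enough that the $x_{i_0(v_j)}^{(k)}$-term dominates the others (with threshold depending on the finitely many integers $a_i(v_j)$), the sign of $\phi_k(v_j) = \sum_i a_i(v_j) x_i^{(k)}$ matches that of $a_{i_0(v_j)}(v_j)$, which is exactly the sign prescribed by the flag order. So $<_k$ converges to the flag order.

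It remains to check that $<_k \in \LOtran(\ZZ^n)$, that is, that the translations of the standard generators $e_1, \ldots, e_n$ have pairwise irrational ratios. Expanding via the rational change-of-basis matrix $M$ defined by $e_j = \sum_i M_{ij} f_i$, one computes $\phi_k(e_j) = \sum_i M_{ij} x_i^{(k)}$; the invertibility of $M$ precludes any two of its columns from being $\QQ$-proportional, and this, combined with $\QQ$-linear independence of the $x_i^{(k)}$, forces each ratio $\phi_k(e_j)/\phi_k(e_{j'})$ ($j \neq j'$) to be irrational. The main obstacle in this plan is exactly this last bookkeeping step---ensuring that translation orders defined naturally via the adapted basis remain \emph{translation orders} in the sense of the definition (which is phrased in terms of the standard basis)---but it is resolved by the mild and generic $\QQ$-linear independence hypothesis on the $x_i^{(k)}$.
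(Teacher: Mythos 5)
Your proposal follows exactly the route the paper takes: it invokes Koberda's density of $\LOflag(\ZZ^n)$ and then verifies the one step the paper leaves to ``the interested reader,'' namely that $\LOflag(\ZZ^n)$ lies in the closure of $\LOtran(\ZZ^n)$, by approximating a flag order with injective homomorphisms $\ZZ^n \to \RR$ whose values on an adapted basis are $\QQ$-linearly independent and rapidly increasing. The details you supply (domination of the top nonzero coordinate for large $k$, and the change-of-basis argument showing the standard generators still translate by pairwise incommensurable amounts) are correct, so this is the paper's argument with the omitted verification filled in.
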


Now define
$$\COfintran(A) := \bigsqcup_{\substack{K \leq A \\ \rank K = n}} \LOtran(K) \times \CO(A / K) \subset \COfin(A)$$
and
$$\CObtran(A) := \bigsqcup_{\substack{K \subset A\\ \rank K < n}} \LOtran(K) \times \CO(A /K) \subset \COb(A)$$
to be subspaces consisting of orders whose restriction to their blowdown kernels are translation orders. By the previous lemma, we have

\begin{lem}\label{lem:COfintran}
$\LOtran(K) \times \CO(A / K)$ is dense in $\LO(K) \times \CO(A / K)$.  In particular, $\COfintran(A)$ is dense in $\COfin(A)$ and $\CObtran(A)$ is dense in $\COb(A)$. \qed
\end{lem}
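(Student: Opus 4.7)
The plan is to prove the lemma in three steps, reducing everything to Lemma \ref{lem:LOtran} and to continuity of the intertwining construction.

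First I would justify the first assertion via pure product topology. Any blowdown kernel $K \leq A$ that appears in the decomposition of $\COfin(A)$ or $\COb(A)$ is a subgroup of a finitely generated abelian group, hence itself finitely generated. Since $K$ admits a left-invariant linear order (equivalently, embeds in $\Homeop(\RR)$) it is torsion-free, so $K \simeq \ZZ^k$ for some $k$. Lemma \ref{lem:LOtran} then gives that $\LOtran(K)$ is dense in $\LO(K)$, and the standard fact that a product of dense subsets is dense in the product topology yields density of $\LOtran(K) \times \CO(A/K)$ in $\LO(K) \times \CO(A/K)$.

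Next I would address the ``in particular'' clauses. For each admissible $K$, the intertwining construction from the proof of Theorem \ref{thm:abelianCO} gives a bijection
$$\Phi_K : \LO(K) \times \CO(A/K) \;\longrightarrow\; \Phi_K\!\left(\LO(K) \times \CO(A/K)\right) \subset \CO(A).$$
I claim $\Phi_K$ is continuous, hence (by compactness of the domain and Hausdorffness of $\CO(A)$) a homeomorphism onto its image. To see continuity, recall from Lemma \ref{lem:COsubbasis} that the topology on $\CO(A)$ is generated by the subbasic clopen sets $B_T$ for $T \in A^3\setminus\Delta_3(A)$, so it suffices to check that $\Phi_K^{-1}(B_T)$ is open. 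But by the construction, the value $\Phi_K(<_K, c_{A/K})(a,b,c)$ is determined, in each of the three cases according to whether $0$, $1$, or $2$ of the projections $\bar a, \bar b, \bar c \in A/K$ coincide, by one evaluation of $c_{A/K}$ together with finitely many comparisons in $<_K$ among differences of $a, b, c$. Thus $\Phi_K^{-1}(B_T)$ depends on only finitely many coordinates in the product space and is clopen.

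Finally I would combine these observations. Since continuous maps send dense subsets to dense subsets of the image,
$$\Phi_K\!\left(\LOtran(K) \times \CO(A/K)\right) \text{ is dense in } \Phi_K\!\left(\LO(K) \times \CO(A/K)\right).$$
Taking the disjoint union over full-rank (resp.\ non-full-rank) blowdown kernels and observing that density in each component implies density of the union in the union, we obtain that $\COfintran(A)$ is dense in $\COfin(A)$ and $\CObtran(A)$ is dense in $\COb(A)$. The only step requiring real care is the continuity verification for $\Phi_K$, since one must be sure that the intertwining genuinely uses only finitely many coordinates per evaluated triple; everything else is formal product topology plus compact-to-Hausdorff bijections.
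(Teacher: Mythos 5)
Your proof is correct and follows the same route the paper intends: the paper states this lemma with no proof at all, treating it as an immediate consequence of Lemma \ref{lem:LOtran} together with the fact that a product of dense subsets is dense. Your additional verification that the intertwining bijection $\Phi_K$ is continuous (hence a homeomorphism onto its image by compactness), so that density transports from $\LO(K)\times\CO(A/K)$ into $\COfin(A)$ and $\COb(A)$, is a detail the paper silently omits, and it is carried out correctly.
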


We can now establish our key result for understanding the topology of $\CO(A)$.

\begin{thm}\label{thm:rotationordersdense}
For any finitely generated abelian group $A$, the space $\COrot(A)$ of rotation orders of $A$ is dense in the space $\CO(A)$ of all circular orders.
\end{thm}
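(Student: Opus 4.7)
My plan is to construct, for each $c \in \CO(A)$ and each finite set $\{g_1,\dots,g_M\} \subset A$, a rotation order $c_\epsilon \in \COrot(A)$ matching $c$ on every triple drawn from this set. The reduction is quick: if $\rank(A) = 0$ then $A$ is finite cyclic and $\CO(A) = \COrot(A)$; otherwise Theorems \ref{thm:freeabelianCO} and \ref{thm:abelianCO} split $\CO(A) = \COfin(A) \sqcup \COmin(A) \sqcup \COb(A)$, and Margulis's Theorem \ref{thm:Tits} applied to a minimal dynamical realization identifies $\COmin(A)$ with $\COrot(A)$. By Lemma \ref{lem:COfintran}, it then suffices to handle $c \in \COfintran(A) \cup \CObtran(A)$.

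For such $c$, let $K$ be the blowdown kernel and $\mathrm{rot} : A \to \RR/\ZZ$ the rotation homomorphism attached to the quotient action of $A/K$; note $\ker(\mathrm{rot}) = K$ because $A/K$ embeds into $\RR/\ZZ$ (in type 1 by finite cyclicity, in type 3 by the rotation conjugacy from Theorem \ref{thm:Tits}). The translation order on $K$ gives an injective homomorphism $\bar x : K \to \RR$, and I extend this to a homomorphism $\bar X : A \to \RR$ using divisibility of $\RR$: in the $\COfintran$ case, $\bar X(g) = |A/K|^{-1} \bar x(|A/K| g)$ is forced; in the $\CObtran$ case, one chooses generic values on a basis lift of $A/K$. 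Now define
$$\phi_\epsilon(g) = \mathrm{rot}(g) + \epsilon\, \bar X(g) \pmod{\ZZ},$$
a homomorphism $A \to \RR/\ZZ$. For all but countably many $\epsilon > 0$, the images $\phi_\epsilon(e_i)$ of the free generators of $A$ are pairwise incommensurable and $\phi_\epsilon$ is injective, so $\phi_\epsilon$ realizes a totally irrational rotation order $c_\epsilon \in \COrot(A)$.

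The main technical step is matching $c_\epsilon$ to $c$ on the finite set for sufficiently small $\epsilon$. The circular order of the orbit $\{r_c(g_i)(p)\}$ on $S^1$ is determined first by $\mathrm{rot}(g_i)$ (positioning on the blown-down circle), and—within each $K$-coset, where these values coincide since $\ker(\mathrm{rot}) = K$—by $\bar x(g_i - g_j)$, the translation order on $K$. Meanwhile, for $\epsilon$ smaller than the minimum gap between distinct values $\mathrm{rot}(g_i)$, the circular order of $\{\phi_\epsilon(g_i)\} \subset \RR/\ZZ$ is also primarily by $\mathrm{rot}(g_i)$; and within a common $K$-coset we have $\phi_\epsilon(g_i) - \phi_\epsilon(g_j) = \epsilon\, \bar x(g_i - g_j)$, exactly reproducing the translation order. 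Thus the two circular orders agree on every triple from $\{g_1, \dots, g_M\}$. The principal obstacle is guaranteeing the simultaneous small-$\epsilon$ and generic-$\epsilon$ requirements are compatible, but the small requirement cuts out an open interval $(0,\delta)$ while the genericity fails on only a countable set.
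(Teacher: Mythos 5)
Your proof is correct, and while it rests on the same underlying perturbation as the paper's --- nudge the rotation data of the blown-down action by a small multiple of the translation data on the blowdown kernel --- your execution is genuinely different and cleaner in two respects. First, the paper writes down an explicit dynamical realization of a $\COfintran$ order in which $K$ acts on the blow-up intervals via an $\arctan$-conjugated translation, computes $(r_c(a))(0) = a\cdot r + \tfrac{\ell}{\pi}\arctan(a'\cdot x)$, and then must rescale $x$ so that $\arctan$ is approximately linear on the relevant finite set before choosing the perturbation $\delta$ of the rotation vector; you bypass the realization entirely by observing that the order on any finite set is exactly lexicographic in $(\mathrm{rot},\bar{x})$ and that the homomorphism $\phi_\epsilon = \mathrm{rot} + \epsilon\bar{X}$ reproduces this lexicographic order \emph{exactly} (not approximately) once $\epsilon\max_i|\bar{X}(g_i)|$ is below half the minimal gap between distinct $\mathrm{rot}$-values, with genericity of $\epsilon$ handling total irrationality. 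Second, the paper handles $\COb(A)$ by Noetherian induction on quotients (first replacing the circular order on $A/K$ by a rotation order via the inductive hypothesis), whereas you treat $\COfintran$ and $\CObtran$ uniformly by noting that for the canonical blowdown kernel the quotient action is minimal (Proposition 5.8 of Ghys) and hence already conjugate to a rotation group by Margulis, so $\mathrm{rot}$ is directly available --- this eliminates the induction. What the paper's route buys is compatibility with its recursive statement of the classification, where the $\CO(A/K)$ factor is allowed to be an arbitrary order; what yours buys is brevity and an exact rather than approximate matching. Two small points you should make explicit: (i) the orientation convention matching positive translation in a blow-up interval with the counterclockwise direction, so that the correct perturbation is $+\epsilon\bar{x}$ rather than $-\epsilon\bar{x}$ (harmless, since one may replace $\epsilon$ by $-\epsilon$); and (ii) that $\bar{X}$ restricted to the free generators of $A$ automatically has pairwise irrational ratios because $\bar{X}$ is injective on the free part (it extends the injective $\bar{x}$ on a finite-index subgroup of it), so a rational ratio would force a relation among the generators.
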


\begin{proof}
The proof proceeds via direct analysis and Noetherian induction.  We consider the poset of finitely generated abelian groups with cyclic torsion subgroups where $A > B$ if there is a surjection $A \onto B$.  Of course, the base cases are cyclic groups of prime order, but the statement of the theorem is trivial for all cyclic groups because $\COrot(A) = \CO(A)$. We will show in separate cases that $\COmin(A)$ is dense in both $\COfin(A)$ and $\COb(A)$.

First we will show $\COrot(A)$ is dense in $\COfin(A)$.  For this case, we will not need induction.  Let $c \in \COfin(A)$ and let $S \subset A \times A \times A$ be a finite subset of triples.  Write
$$A = \ZZ \times \dots \times \ZZ \times \ZZ/m_0.$$
We will find $\theta' \in T_n$ and $k$ with $(k,m_0) = 1, 0 \leq k < m_0$ such that $c_{\theta',k} |S = c|S$.  Let $\pi S$ be the set of elements of $A$ involved in the triples of $S$, \ie
$$\pi S := \{a \in A \mid \exists b,c \in A \text{ with } (a,b,c) \in S, (b,a,c) \in S \text{ or } (b,c,a) \in S \}.$$
Let
$$\epsilon = \min_{\substack{a,b \in \pi S \\ a \neq b}} | (r_{c}(a))(0) - (r_{c}(b))(0) |.$$
It suffices to find $\theta \in T_n$ and $k$ with $(k,m_0) = 1, 0 \leq k < m_0$ such that
$$ | (r_{c_{\theta,k}}(a))(0) - (r_{c}(a))(0) | < \frac{\epsilon}{2}$$
for all $a \in \pi S$.  Furthermore, by Lemma \ref{lem:COfintran} it suffices to assume $c \in \COfintran(A)$.  We will give an explicit description of a dynamical realization of such a $c$ and do some simple analysis with it.

Let the blowdown kernel of $c$ be
$$K = m_1\ZZ \times \dots \times m_n\ZZ \times m_{n+1}\ZZ/m_0\leq A,$$
so that $c \in \LOtran(K) \times \CO(A/K)$.  Note that $\LO(K)$ is empty if $K$ has torsion, so we assume
$$K = m_1\ZZ \times \dots \times m_n\ZZ < A.$$

Let the translation order of $c|K$ have a dynamical realization encoded by the translation data $[(m_1x_1,\dots,m_nx_n)] \in S_{n-1}$, meaning $m_ie_i \in K$ acts by the translation of length $m_ix_i$.  Let $x = (x_1,\dots,x_n)$.  Since
$$A/K = \ZZ/m_1 \times \dots \times \ZZ/m_n \times \ZZ/m_0$$
is assumed to be cyclic, we fix an isomorphism
$$A/K \to \ZZ/M$$
where $M := m_1\dots m_nm_0$ by letting $\overline{e_i} \mapsto \frac{M}{m_i} \pmod M$.  Let the blown-down circular order on $A/K$ have a dynamical realization in which $\overline{ e_1 + \dots + e_n + e_0} = 1 \pmod M$ acts by the rotation
$$\frac{k}{M}$$
where $(k,M) = 1$.  Thus $e_i$ acts by the rotation $r_i = \frac{k}{m_i}$.  Encode these rotation angles in the vector $r = (r_1,\dots,r_n,r_0)$.  Let $a = a_1e_1 + \cdots a_ne_n + a_0e_0 = (a_1,\dots,a_n,a_0) \in \ZZ^n \times \ZZ/m_0$ and $a' = (a_1,\dots,a_n)$.  Then we can describe a dynamical realization of $c$ by
$$(r_c(a))(0) = T_{a' \cdot x} \circ R_{a \cdot r}(0)  \mod 1$$
where $a\cdot r$ and $a' \cdot x$ are dot products of vectors, $R_{a\cdot r}$ is a rotation by angle $a\cdot r$ and the support of $T_{a' \cdot x}$ is the union of the blow-up intervals, on each of which $T_{a' \cdot x}$ acts by the translation of distance $a' \cdot x$.  More explicitly, there are $\frac{1}{M}$ blow-up intervals $I_i$, each of length $\ell = \frac{1}{2M}$, and on each of these we might have
$$T_{a' \cdot x}(p) = h_i^{-1}\left( \frac{\ell}{\pi} \arctan \left[ \tan\left(\frac{\pi}{\ell}h_i(p)\right) + a' \cdot x\right]\right)$$
where $h_i: I_i \to (-\frac{\ell}{2},\frac{\ell}{2})$ is an orientation preserving isometry taking the midpoint of $I_i$ to $0$.  Such a requirement for the $h_i$'s is important, because it ensures $r_c$ actually yields a group action.

We compute
$$(r_c(a))(0) = a_1r_1 + \dots a_nr_n +a_0r_0 + \left( \frac{\ell}{\pi} \arctan \left( a' \cdot x\right)\right)
= a\cdot r + \left( \frac{\ell}{\pi} \arctan \left( a' \cdot x\right)\right).$$
We need to find $\theta' \in T_n$ close to $r'=(r_1,\dots,r_n)$ and in the linear regime of $\arctan$ as a function of all the $a'$ found in $\pi S$.  Write $\theta = r + (\delta,0)$, $\theta' = r' + \delta'$ where $\delta' = (\delta_1,\dots,\delta_n)$.  Then
$$| (r_{c_{\theta',k}}(a))(0) - (r_c(a))(0) | = |a\cdot \theta - a \cdot r - \left( \frac{\ell}{\pi} \arctan \left( a' \cdot x\right)\right)|
=  |a'\cdot \delta - \left( \frac{\ell}{\pi} \arctan \left( a' \cdot x\right)\right)|$$
The essential observation is that we can scale $x$ sufficiently small enough so that $\arctan \left( a' \cdot x\right)$ is approximately linear for all $a'$ involved in $\pi S$.  This is devious because we change the dynamical realization of $c$ depending on $\pi S$.

Now clearly we can pick $\delta_1,\dots,\delta_n$ so that $r + \delta \in S_{n-1}$ and
$$|a'\cdot \delta - \left( \frac{\ell}{\pi} \arctan \left( a' \cdot x\right)\right)| < \frac{\epsilon}{2}.$$
This shows $\COrot(A)$ is dense in $\COfin(A)$.

To show $\COrot(A)$ is dense in $\COb(A)$, we apply the induction hypothesis.  Specifically, induction tells us
$$\CObtranrot(A) = \bigsqcup_{\substack{K \subset A\\ \rank K < n}} \LOtran(K) \times \COrot(A /K) \subset \CObtran(A)$$
is a dense subset of $\CObtran(A)$, hence $\CObtranrot(A)$ is dense in $\COb(A)$.  But now a hands-on analytic argument can be applied exactly as above.  We spare the reader any more details.
\end{proof}

Finally, we can address the topology of $\CO(A)$.

\begin{thm}\label{thm:cantorset}
Suppose $\rank(A) >0$ and $A$ has a cyclic torsion subgroup.  Then $\CO(A)$ is a Cantor set.
\end{thm}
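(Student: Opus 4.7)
The plan is to invoke the standard characterization of the Cantor set as the unique nonempty, compact, metrizable, totally disconnected, perfect topological space. Three of these four properties come essentially for free: $\CO(A)$ is compact, metrizable, and totally disconnected as a closed subspace (by Proposition~\ref{prop:COclosed}) of the Cantor space $\{-1,0,1\}^{A^3}$, and it is nonempty because any totally irrational rotation action produces a rotation order when $n := \rank(A) \geq 1$. So the entire content of the theorem reduces to proving that $\CO(A)$ has no isolated points.

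For this, I would first use Theorem~\ref{thm:rotationordersdense} to focus attention on $\COrot(A)$: any $c \in \CO(A) \setminus \COrot(A)$ is automatically non-isolated, since density supplies rotation orders in every neighborhood of $c$. It thus suffices to show that no $c = c_{\theta_0, k_0} \in \COrot(A)$ is isolated. Writing $A = \ZZ^n \times \ZZ/m$ and fixing a $k_0$ coprime to $m$ (vacuously if $m=1$), I would study the family $\Phi_{k_0} \colon T_n \to \CO(A)$, $\theta \mapsto c_{\theta, k_0}$, whose two key properties to establish are continuity and injectivity.

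Continuity should be routine: $c_{\theta, k_0}(a,b,c)$ records the cyclic orientation on $S^1$ of the three points $a\cdot(\theta, k_0/m)$, $b\cdot(\theta, k_0/m)$, $c\cdot(\theta, k_0/m)$, and for $\theta \in T_n$ these are distinct and vary continuously with $\theta$, so the orientation is locally constant on any fixed finite set of nondegenerate triples---which is exactly the convergence required by the Tychonoff topology on $\CO(A)$. Injectivity is the main subtle point, and the step where I expect to spend the most care: one should recover $\theta_i$ from $c_{\theta, k_0}$ as the Poincar\'e rotation number of the $i$-th standard generator acting in the dynamical realization, appealing to H\"older's theorem (as invoked earlier in the paper's discussion of dynamical realizations) to guarantee that this realization is semi-conjugate to the honest rotation action on the gaps, so that the rotation number really is $\theta_i$.

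Once continuity and injectivity are in hand, the proof finishes quickly: for $n \geq 1$ the set $T_n$ has no isolated points---its complement in $\TT^n$ is a countable union of proper affine subtori, hence meager---so any neighborhood $V$ of $\theta_0$ in $T_n$ contains some $\theta_1 \neq \theta_0$. Shrinking $V$ using continuity of $\Phi_{k_0}$ to fit inside a prescribed neighborhood $U$ of $c_{\theta_0, k_0}$ in $\CO(A)$, injectivity then produces $c_{\theta_1, k_0} \in U \setminus \{c_{\theta_0, k_0}\}$, showing $c_{\theta_0, k_0}$ is not isolated and completing the argument.
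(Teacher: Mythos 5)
Your proposal is correct and follows essentially the same route as the paper: both reduce, via Theorem~\ref{thm:rotationordersdense}, to showing rotation orders are not isolated, and both achieve this by perturbing the rotation vector $\theta$ within $T_n$ while keeping the order unchanged on a prescribed finite set of triples (your ``continuity of $\Phi_{k_0}$'' is exactly the paper's Cauchy--Schwarz estimate $|a\cdot(\theta-\psi,0)|\leq C\|\theta-\psi\|_2<\epsilon/2$). The only cosmetic difference is that you justify injectivity of $\theta\mapsto c_{\theta,k}$ via rotation numbers, whereas the paper relies on the parametrization $\COmin=T_n$ already established in Theorem~\ref{thm:freeabelianCO}; both are fine.
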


\begin{proof}
Let $\rank(A) = n$ and the torsion subgroup of $A$ have order $m$, so
$$A = \ZZ \times \dots \times \ZZ \times \ZZ/m.$$

We already know $\CO(A)$ is a closed subset of a Cantor set, so it suffices to show it is perfect.  In fact, since $\COrot(A)$ is dense, it suffices to show $\COrot(A)$ is perfect.  To check for perfectness, we ought to show that every basis open set is either empty or infinite.  So let $$B = B_{t_1,\dots,t_l} = \{ c \in \CO(A) \mid c(t_1) = \dots = c(t_l) = 1\}$$ be a nonempty basis set, where $t_1,\dots,t_l \in A^3$ is a finite set of triples of $A$.  Then by density there is some $c_{\theta,k} \in B$ with $c_{\theta,k} \in \COrot(A)$, where $\theta = (\theta_1,\dots,\theta_n) \in T_n$ and $(k,m) = 1, 0 \leq k < m$.  Recall that $k$ indicates that the standard generator of $\ZZ/m$ acts by rotation $\frac{k}{m}$.  Write $S = \{ t_1,\dots,t_l\}$ and let
$$\pi S = \{a \in \ZZ^n \mid \exists b,c \in \ZZ^n \text{ with } (a,b,c) \in S, (b,a,c) \in S \text{ or } (b,c,a) \in S \}$$
be the set of elements of $A$ involved in $S$.  Suppose our marked trivially stabilized point for the dynamical realization $r_{c_{\theta,k}}$ of $c_{\theta,k}$ is $0 \in S^1$.  Let
$$\epsilon = \min_{\substack{a, b \in \pi S \\ a \neq b}} | (r_{c_{\theta,k}}(a))(0) - (r_{c_{\theta,k}}(b))(0) |$$
be the smallest distance between points in the orbit of 0 under $r_{c_{\theta,k}}$ restricted to elements of $\ZZ^n$ involved in $S$.  Note this minimum makes sense because $S$ is finite.  In particular, $0 < \epsilon < 1$.

By the triangle inequality, it suffices to find infinitely many $\psi = (\psi_1,\dots,\psi_n) \in T_n$ such that
$$| (r_{c_{\theta,k}}(a))(0) - (r_{c_{\psi,k}}(a))(0) | < \frac{\epsilon}{2}$$
for all $a \in \pi T$.

Notice that we can write
$$(r_{c_{\theta,k}}(a))(0) = a \cdot (\theta,\frac{k}{m})$$
where $\cdot$ is the dot product and $a \in A = \ZZ \times \dots \times \ZZ \times \ZZ/m$.  Now we can use the Cauchy-Schwarz inequality to conclude
$$ | (r_{c_{\theta,k}}(a))(0) - (r_{c_{\psi,k}}(a))(0) | = | a \cdot (\theta - \psi,0) |  \leq \| a\|_2 \| \theta - \psi\|_2 < C \|\theta - \psi\|_2$$
where $C$ is some finite constant depending only on $\pi S$.  Of course, we can find infinitely many $\psi \in T_n$ such that
$$\|\theta - \psi\|_2 < \frac{\epsilon}{2C}$$
so the claim follows.
\end{proof}

The density of rotation orders can also be used to show

\begin{cor}\label{cor:abeliangenuinefaithful}
Let $A$ be a finitely generated abelian group. Then $\Aut(A)$ acts faithfully on $CO_g(A)$. \qed
\end{cor}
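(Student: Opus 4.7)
The plan is to leverage the explicit structure of rotation orders together with the fact that every rotation order is genuine. If $A$ is trivial the claim is vacuous, so assume $A \cong \ZZ^n \times \ZZ/m$ with $(n,m) \neq (0,1)$. First I will observe that $\COrot(A) \subseteq \CO_g(A)$: when $n \geq 1$, a rotation order $c_{\theta,k}$ with $\theta \in T_n$ dynamically realizes as a minimal action on $S^1$ by rotations and therefore has no global fixed point, whereas the dynamical realization of any linear order fixes $\infty \in S^1 = \RR \cup \{\infty\}$; when $n=0$, $A = \ZZ/m$ is a nontrivial finite cyclic group and admits no linear order at all since $\Homeop(\RR)$ is torsion-free. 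Consequently, any $\rho \in \Aut(A)$ acting trivially on $\CO_g(A)$ must fix every rotation order $c_{\theta,k}$ pointwise; note that the density of $\COrot(A)$ in $\CO(A)$ (Theorem \ref{thm:rotationordersdense}) combined with Lemma \ref{lem:autactionhomeo} actually upgrades this to fixing every element of $\CO(A)$, but fixing the rotation orders will already suffice.

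Next, I will compute $\rho \cdot c_{\theta,k}$ explicitly. The dynamical realization $r_{c_{\theta,k}}$ sends $g \in A$ to the rotation $R_{\phi_{\theta,k}(g)}$ of $S^1 = \RR/\ZZ$, where $\phi_{\theta,k}\colon A \to \RR/\ZZ$ is the homomorphism carrying the standard generators of $\ZZ^n$ to $\theta_1,\dots,\theta_n$ and the torsion generator to $k/m$. By Lemma \ref{lem:autactiondef}, $\rho \cdot c_{\theta,k}$ is dynamically realized by $g \mapsto R_{\phi_{\theta,k}(\rho^{-1}(g))}$---that is, by the rotation homomorphism $\phi_{\theta,k} \circ \rho^{-1}$. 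Because rotation numbers of circle homeomorphisms are semi-conjugacy invariants and assemble into a genuine homomorphism on any abelian subgroup of $\Homeop(S^1)$, the rotation-vector homomorphism $A \to \RR/\ZZ$ is intrinsically recoverable from the circular order. Hence $\rho \cdot c_{\theta,k} = c_{\theta,k}$ forces the identity $\phi_{\theta,k} \circ \rho^{-1} = \phi_{\theta,k}$ of group homomorphisms.

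To conclude, I will pick $(\theta,k)$ so that $\phi_{\theta,k}$ is injective---for instance by taking $1, \theta_1,\dots,\theta_n, 1/m$ to be $\QQ$-linearly independent and $\gcd(k,m) = 1$, which is possible whenever $A$ is nontrivial. Injectivity of $\phi_{\theta,k}$ combined with $\phi_{\theta,k} \circ \rho^{-1} = \phi_{\theta,k}$ then forces $\rho^{-1}$ to be the identity automorphism of $A$, hence so is $\rho$. The subtlest step is the claim that the rotation-vector homomorphism is recoverable from the circular order---equivalently, that $(\theta,k) \mapsto c_{\theta,k}$ is injective on the locus where $\phi_{\theta,k}$ is injective. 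This reduces to the observation that the dynamical realization of a minimal rotation circular order is topologically conjugate to the defining rotation action on $S^1$, a consequence of Margulis's Theorem \ref{thm:Tits} (as used in the proof of Theorem \ref{thm:freeabelianCO}) together with the semi-conjugacy invariance of rotation numbers.
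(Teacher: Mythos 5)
Your proposal is correct and follows essentially the same route as the paper: both arguments restrict attention to the rotation orders, note that $\COrot(A)\subseteq\CO_g(A)$, and show that an automorphism fixing a rotation order must preserve the associated rotation homomorphism $A\to\RR/\ZZ$ and hence be the identity (the paper phrases this via faithfulness of the dynamical realization and proves freeness on all of $\COrot(A)$, whereas you invoke semi-conjugacy invariance of rotation numbers and use a single injective $\phi_{\theta,k}$, which is a cosmetic difference). One small slip: you cannot ask $1,\theta_1,\dots,\theta_n,1/m$ to be $\QQ$-linearly independent since $1/m\in\QQ$; requiring $1,\theta_1,\dots,\theta_n$ to be $\QQ$-linearly independent and $\gcd(k,m)=1$ already makes $\phi_{\theta,k}$ injective.
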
 

Indeed, this corollary follows immediately from the fact that $CO_{rot}(A) \subset \CO_g(A)$ is dense and the following

\begin{thm}\label{prop:abelianrotfaithful}
Let $A$ be a finitely generated abelian group that is circularly orderable. Then $\Aut(A)$ acts faithfully on $CO_{rot}(A)$.  In fact, $\Aut(A)$ acts freely.
\end{thm}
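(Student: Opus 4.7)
The plan is to parametrize $\COrot(A)$ explicitly, compute the action of $\Aut(A)$ in these coordinates, and show that only the identity automorphism can stabilize any rotation order.

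Write $A = \ZZ^n \times \ZZ/m$ (allowing $n=0$ or $m=1$). Every rotation order of $A$ is the pullback of the standard orientation on $\TT^1 = \RR/\ZZ$ via a faithful homomorphism $\phi_{\theta,j}: A \to \TT^1$, $(a,t) \mapsto a\cdot\theta + tj/m \pmod{\ZZ}$, where $1, \theta_1, \dots, \theta_n$ are linearly independent over $\QQ$ (the condition needed for $\phi_{\theta,j}$ to be injective) and $0 \leq j < m$ with $\gcd(j,m)=1$; denote this order by $c_{\theta,j}$. The first substep is to verify that $(\theta,j) \mapsto c_{\theta,j}$ is injective: restriction of $c_{\theta,j}$ to $\{0\}\times\ZZ/m$ recovers $j$ (different $j$ give different cyclic orderings of the orbit $\{0, j/m, 2j/m, \dots\}$), and for each $a \in \ZZ^n$ the restriction to the cyclic subgroup generated by $(a,0)$ is a rotation order on $\ZZ$ determined by $a\cdot\theta \pmod{\ZZ}$, which forces $\theta \equiv \theta' \pmod{\ZZ^n}$ whenever $c_{\theta,j} = c_{\theta',j'}$.

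Next I describe $\Aut(A)$ concretely: since $\ZZ/m$ is the unique maximal torsion subgroup and $\ZZ^n$ is torsion-free, every $\rho \in \Aut(A)$ decomposes as $\rho(a,t) = (Ma, \sigma(a) + kt)$ with $M \in GL_n(\ZZ)$, $k \in (\ZZ/m)^\times$, and $\sigma \in \Hom(\ZZ^n, \ZZ/m)$. A direct computation yields
$$\phi_{\theta,j} \circ \rho^{-1}(a,t) = a \cdot M^{-T}\theta + \frac{jk^{-1}}{m}t - \frac{jk^{-1}}{m}\sigma(M^{-1}a) \pmod{\ZZ},$$
so $\rho \cdot c_{\theta,j} = c_{\theta',j'}$ with $j' \equiv jk^{-1} \pmod{m}$ and $\theta'_i \equiv (M^{-T}\theta)_i - (jk^{-1}/m)\sigma(M^{-1}e_i) \pmod{\ZZ}$.

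Now suppose $\rho \cdot c_{\theta,j} = c_{\theta,j}$. By injectivity of the parametrization, $j' = j$ forces $k \equiv 1 \pmod{m}$, and then $\theta' \equiv \theta \pmod{\ZZ^n}$ says that each coordinate of $(M^{-T} - I)\theta$ lies in $\QQ$ (specifically, $(j/m)\sigma(M^{-1}e_i) + \ZZ$). Because $1, \theta_1, \dots, \theta_n$ are $\QQ$-linearly independent, the only way a $\ZZ$-linear combination of the $\theta_i$ can be rational is if all coefficients vanish, giving $M^{-T} = I$ and hence $M = I$. Substituting back, $(j/m)\sigma(e_i) \in \ZZ$ for every $i$; since $\gcd(j,m) = 1$, this forces $\sigma(e_i) \equiv 0 \pmod{m}$, so $\sigma = 0$. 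Thus $\rho = \id$, and the action is free.

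The main obstacle I anticipate is the bookkeeping in the mixed free-plus-torsion case, namely keeping track of the off-diagonal homomorphism $\sigma$ and its interaction with the torsion rotation parameter $j/m$ when computing $\rho \cdot c_{\theta,j}$. The $\QQ$-linear independence trick that pins $M$ to the identity is routine once the action on parameters is laid out, and the torsion constraints then collapse $k$ and $\sigma$ essentially automatically using the coprimality $\gcd(j,m) = 1$.
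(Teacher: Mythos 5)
Your proof is correct and follows essentially the same route as the paper: both decompose $\rho \in \Aut(A)$ into the matrix part $M$, the torsion automorphism $k$, and the mixing homomorphism $\sigma$, compute the effect of $\rho^{-1}$ on the rotation homomorphism $A \to \TT^1$, and then use $\QQ$-linear independence of $1,\theta_1,\dots,\theta_n$ (equivalently, faithfulness of the rotation realization) to force $M = I$, $k = 1$, $\sigma = 0$. Your explicit check that $(\theta,j) \mapsto c_{\theta,j}$ is injective is a welcome extra detail, since it justifies the step where equality of orders is upgraded to equality of rotation parameters, which the paper handles more tersely.
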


\begin{proof}
Let $A = \ZZ^n \times \ZZ/m\ZZ$, with $m = 0$ corresponding to the case of $A$ free abelian.  Then
$$\Aut(A) \simeq \Aut(\ZZ^n) \oplus \Hom(\ZZ^n,\ZZ/m\ZZ) \oplus \Aut(\ZZ/m\ZZ)$$
where
$$\Hom(\ZZ^n, \ZZ/m\ZZ) \simeq (\ZZ/m\ZZ)^n.$$
To see this, let $\alpha$ be an automorphism of $A$.  Let $i_1: \ZZ^n \to A$ and $i_2: \ZZ/m\ZZ \to A$ be the natural inclusions, and let $\pi_1: A \to \ZZ^n$ and $\pi_2: A \to \ZZ/m\ZZ$ be the natural projections.  Then, by the universal properties of direct sum and direct product of abelian groups (which are the same), $\alpha$ is determined by the four maps
$$\pi_1 \circ \alpha \circ i_1: \ZZ^n \to \ZZ^n,$$
$$\pi_1 \circ \alpha \circ i_2: \ZZ/m\ZZ \to \ZZ^n,$$
$$\pi_2 \circ \alpha \circ i_1: \ZZ^n \to \ZZ/m\ZZ,$$
$$\pi_2 \circ \alpha \circ i_2: \ZZ/m\ZZ \to \ZZ/m\ZZ.$$
Of course, $\pi_1 \circ \alpha \circ i_2$ has to be trivial.  A simple check shows $\alpha$ is bijective if and only if $\pi_1 \circ \alpha \circ i_1$ and $\pi_2 \circ \alpha \circ i_2$ are.  In particular, $\pi_2 \circ \alpha \circ i_1$ can be any homomorphism.

Now let $c_{\vec{\theta},k}$ be an element of $\COrot(A)$, where $\vec{\theta} = (\theta_1,\dots,\theta_n) \in T_n$ and $k$ is coprime to $m$.  Recall the subscripts indicate that $c = c_{\vec{\theta},k}$ has a dynamical realization $r_c$ in which
$$r_c(a_1,\dots,a_n,a) = \vec{\theta}\cdot(a_1,\dots,a_n) + \frac{ka}{m} \in S^1 \leq \Homeop(S^1)$$
for all $(a_1,\dots,a_n,a) \in A$.  Then $\alpha$ has the following effect on the dynamical realization of $c$:
$$r_{\alpha \cdot c} = \vec{\theta}\cdot [(\pi_1 \circ \alpha^{-1})(a_1,\dots,a_n,a)] + \frac{k}{m}[(\pi_2 \circ \alpha^{-1})(a_1,\dots,a_n,a)].$$
To see this, note that because $c$ is a rotation order, the dynamical realization is conjugate to the (set-theoretic) order-preserving embedding $i_c: A \into S^1$ constructed in Proposition \ref{prop:faithfulactioniffCO}.  Thinking of $i_c$ as a way to label points in $S^1$ by elements of $A$, it is clear the affect of $\alpha$ on this labelling is to change the label $x$ to the label $\alpha^{-1}(x)$.

If $\alpha \cdot c = c$, then $r_{\alpha \cdot c} = r_c$.  Since $r_c$ is faithful, for all $(a_1,\dots,a_n,a)$ in $A$ conclude
$$\pi_1 \circ \alpha^{-1}(a_1,\dots,a_n,a) = (a_1,\dots,a_n)$$
and
$$\pi_2 \circ \alpha^{-1}(a_1,\dots,a_n,a) = a.$$
The components
$$\pi_1 \circ \alpha^{-1} \circ i_1$$
and
$$\pi_2 \circ \alpha^{-1} \circ i_2$$
of $\alpha$ are therefore identity maps, and the component
$$\pi_2 \circ \alpha^{-1} \circ i_1$$
must be the zero map.  Thus $\alpha^{-1}$, hence $\alpha$, is the identity map.
\end{proof}

\subsection{Archimedean Orders}\label{ss:archimedean}
In the theory of left-invariant linear orders, there are special classes of orders which have been isolated. For instance, a linear order $<$ on $G$ is called \emph{Archimedean} if for any $g, h \in G \setminus \{\Id\}$, there exists $n \in \ZZ$ such that $g^n > h$. We propose a similar notion for circular orders. 

For a countable group $G$ that is not infinite cyclic, we say a circular order $c$ is \emph{Archimedean} if for any two elements $g, h$ of $G$ which are not powers of the same element of $G$, there exists a positive integer $n$ such that
$$c(e, g, h) \neq c(e, g^n, h).$$
We exclude the case that $G = \ZZ$, since otherwise every order on $\ZZ$ would be Archimedean, and the results of this subsection would have to be modified.

The Archimedean property for circular orders is a generalization of the Archimedean property of linear orders, in the sense that if $c$ were a linear order and for all triples with $c(e,g,h) = +1$ (\ie $e < g < h$) there exists an $n>0$ such that the condition $c(e,g^n,h) = -1$ holds, then $c$ would be called an Archimedean linear order. On the other hand, the fact that our definition of Archimedean circular order does not require $c(e,g,h) = +1$ imposes more serious restrictions.

\begin{lem} 
\label{lem:Archimedeangenuine}
An Archimedean circular order is always genuine. 
\end{lem}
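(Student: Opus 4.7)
The plan is to argue by contradiction: suppose $c$ is not genuine, so $c = c_<$ for some left-invariant linear order $<$ on $G$. I will then exhibit a pair $g, h \in G$ that are not powers of a common element yet satisfy $c(e, g, h) = c(e, g^n, h)$ for every positive integer $n$, contradicting the Archimedean hypothesis.

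First I would choose $g, h \in G$ that do not lie in a common cyclic subgroup (equivalently, that are not both powers of a single element of $G$). Such a pair exists provided $G$ is not locally cyclic; the locally cyclic case (e.g.\ $G$ a subgroup of $\QQ$) makes the defining condition of Archimedean vacuous and so poses no issue. Replacing either element by its inverse---a move that preserves non-commensurability---I may assume $e < g$ and $e < h$, and after relabeling also that $h < g$. Left-invariance of $<$ combined with $e < g$ gives $g^{n+1} = g \cdot g^n > e \cdot g^n = g^n$, so inductively $g^n \geq g > h > e$ for every $n \geq 1$.

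With this chain of inequalities in hand, the definition of $c_<$ is read off directly: the triple $(e, g, h)$ fits the pattern $x < z < y$ with $(x,y,z) = (e, g, h)$, whence $c_<(e, g, h) = -1$; the identical reasoning with $g$ replaced by $g^n$ yields $c_<(e, g^n, h) = -1$. Hence $c(e, g, h) = c(e, g^n, h)$ for all $n \geq 1$, which is the desired contradiction. I do not anticipate a substantive obstacle: the argument is a direct unpacking of the sign convention for $c_<$, the only real delicacy being the arrangement of $g, h$ so that the non-commensurability hypothesis is met, which is precisely where the standing assumption $G \neq \ZZ$ enters. Conceptually, the point is that in a linear order, positive powers of a positive element can never ``wrap past'' a fixed positive bound $h$, so the cyclic orientation of $(e, g^n, h)$ is locked.
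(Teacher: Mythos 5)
Your proof is correct and follows essentially the same route as the paper's: assume $c = c_<$ for a left-invariant linear order, pick an incommensurable pair, normalize so that $e < h < g$, and observe that $g^n \geq g > h$ forces $c_<(e,g^n,h) = c_<(e,g,h) = -1$ for all $n$, contradicting the Archimedean hypothesis. The only place you are more explicit than the paper is in flagging the locally cyclic case, where no admissible pair $(g,h)$ exists; the paper silently assumes such a pair can be chosen with the stated order relations.
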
 

\begin{proof}
Suppose $c$ is an Archimedean circular order of a group $G$ which is induced by a linear order $<$ of $G$. Take any $g, h \in G$ satisfying $\Id < h < g$ or $g < \Id < h$. But the Archimedean property for circular orders implies that $\Id < g^n < h$ for some power $n$, which is impossible. 
\end{proof} 

We close this section by answering a natural question: which circular orders are Archimedean? It turns out that as in the case of linear orders (see Section 3 of \cite{Navas10}), Archimedean circular orders arise from free actions.

\begin{prop}\label{prop:archimedeanfree} 
Let $G$ be a group that is not infinite cyclic. Then a circular order $c$ on $G$ is Archimedean if and only if the dynamical realization $r_c$ is a free action. 
\end{prop}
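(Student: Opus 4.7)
The plan is to treat the two implications of the biconditional separately.

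For $(\Leftarrow)$, assume $r_c$ is free. Every free, orientation-preserving, countable action on $S^1$ is topologically conjugate to an action by rotations (a classical consequence of H\"older's theorem applied on the universal cover), so after conjugation $r_c(g) = R_{\alpha(g)}$ for an injective homomorphism $\alpha\colon G \hookrightarrow \RR/\ZZ$. Given $g, h \in G$ not powers of a common element, I verify the Archimedean property in two cases. If $g$ has finite order $m$, I take $n = m$: then $g^n = \id$ makes the triple degenerate, so $c(\id, g^n, h) = 0 \neq c(\id, g, h) \in \{\pm 1\}$. If $g$ has infinite order, $\alpha(g)$ is necessarily irrational (otherwise some nontrivial power of $g$ would map to the identity, contradicting faithfulness), so $\{n \alpha(g) \bmod 1\}_{n > 0}$ is dense in $\RR/\ZZ$. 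I then pick $n$ placing $R_{n\alpha(g)}(p_0)$ on the opposite side of $R_{\alpha(h)}(p_0)$ from $R_{\alpha(g)}(p_0)$, reversing the cyclic orientation of the triple.

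For $(\Rightarrow)$ I argue the contrapositive. Suppose $r_c$ is not free, so some $g_0 \in G \setminus \{\id\}$ has a fixed point; set $F_0 = \Fix(r_c(g_0))$. Since the marked point $p_0 = i(\id)$ has trivial stabilizer, $p_0 \notin F_0$, and there is a well-defined connected component $I = (q_-, q_+)$ of $S^1 \setminus F_0$ containing $p_0$. The restriction $r_c(g_0)|_I$ is an orientation-preserving fixed-point-free homeomorphism of an open arc, hence topologically conjugate to a translation; after possibly replacing $g_0$ by $g_0^{-1}$, the orbit $\{r_c(g_0^n)(p_0)\}_{n \geq 1}$ lies in $(p_0, q_+) \subset I$ and converges monotonically to $q_+$.

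The key computation is that for any $h \in G$ with $r_c(h)(p_0)$ lying outside the closed arc $[p_0, q_+]$, the cyclic orientation of $(p_0, r_c(g_0^n)(p_0), r_c(h)(p_0))$ on $S^1$ is independent of $n \geq 1$, so $c(\id, g_0^n, h) = c(\id, g_0, h)$ for every $n > 0$, violating Archimedean for the pair $(g_0, h)$. The main obstacle is exhibiting such an $h$ that additionally is not a power of a common element with $g_0$. Assuming $G$ contains some $k$ not in a common cyclic subgroup with $g_0$ (in the locally cyclic case Archimedean is vacuous and the proposition is handled as an edge case), I consider the family $h_m = k g_0^m$. Each $h_m$ still shares no cyclic subgroup with $g_0$, since $k = h_m g_0^{-m}$ would otherwise lie in that subgroup; moreover, $r_c(h_m)(p_0) = r_c(k)\bigl(r_c(g_0^m)(p_0)\bigr)$ accumulates at $r_c(k)(q_\pm)$ as $m \to \pm\infty$. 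A suitable choice of $k$ and large $|m|$ then places $r_c(h_m)(p_0)$ outside $[p_0, q_+]$, completing the construction.
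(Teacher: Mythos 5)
Your backward direction (free implies Archimedean) is essentially sound, though ``topologically conjugate to rotations'' is too strong: a free action of a countable group on $S^1$ need only be \emph{semi}-conjugate to its rotation-number representation (Denjoy-type examples already rule out conjugacy for $\ZZ$). The argument survives because the rotation-number homomorphism of a free action is injective, so the monotone degree-one semi-conjugacy is injective on the orbit of $p_0$ and hence preserves the cyclic order of the orbit; you should say this rather than claim conjugacy. The forward direction is where you genuinely diverge from the paper: the paper argues \emph{directly} that under the Archimedean hypothesis no $g$ fixes any $p\in S^1$, by taking the least $n$ with $c(e,g,h)\neq c(e,g^n,h)$ and running a case analysis on which of the four arcs cut out by $e,g,h,g^n$ contains $p$. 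You argue the contrapositive, and must therefore \emph{exhibit} a violating pair; this is where the real gap lies. Having found $g_0$ with $[p_0,q_+]$ containing the forward orbit of $p_0$, you need some $h$, sharing no cyclic subgroup with $g_0$, with $r_c(h)(p_0)\notin[p_0,q_+]$. Your family $h_m=kg_0^m$ has $r_c(h_m)(p_0)\to r_c(k)(q_\pm)$, but nothing controls where $r_c(k)$ sends $q_\pm$: both limits may lie inside $[p_0,q_+]$, and no argument is given that some ``suitable $k$'' avoids this. The step you yourself flag as the main obstacle is not actually carried out.

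The gap is fixable by multiplying by powers of $g_0$ on the other side: set $h_m=g_0^{-m}k$, so $r_c(h_m)(p_0)=r_c(g_0)^{-m}\bigl(r_c(k)(p_0)\bigr)$. If $r_c(k)(p_0)\notin I$ you may take $h=k$ outright, since the only point of $[p_0,q_+]$ outside $I$ is $q_+$, which is fixed by $g_0$ and hence not in the (trivially stabilized) orbit of $p_0$. If $r_c(k)(p_0)\in I$, then $r_c(g_0)^{-m}$ pushes it monotonically toward $q_-$, so for large $m$ it lies in $(q_-,p_0)$, which is disjoint from $[p_0,q_+]$; and $h_m$ still shares no cyclic subgroup with $g_0$ by the same computation you gave for $kg_0^m$. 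Two smaller points: the existence of any $k$ not sharing a cyclic subgroup with $g_0$ fails for locally cyclic groups such as $\QQ$ (where the Archimedean condition is vacuous but dynamical realizations of linear orders are not free), a defect the paper's own proof shares and which you at least flag; and your torsion case in the backward direction, taking $n$ equal to the order of $g$ so that the triple degenerates, is legitimate under the paper's literal definition.
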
 

\begin{proof}
We begin by observing that if $G$ is finite cyclic, then every order on $G$ is vacuously Archimedean, and every dynamical realization is a rotation action, hence free. Thus the proposition holds for $G$ finite cyclic. For the remainder of the proof, we assume $G$ is noncyclic.

Suppose $c$ is not Archimedean. Then there exists $g, h \in G$ which are not powers of the same element such that $c(p, r_c(g)(p), r_c(h)(p)) = c(p, r_c(g^n)(p), r_c(h)(p))$ for all $n > 0$. This implies that the orbit of $p$ under forward iterates of $g$ is completely contained in one of the connected components of $S^1 \setminus \{p, r_c(h)(p)\}$. Since $g$ is a homeomorphism, $r_c(g^n)(p)$ accumulates to a point, and such a point must be fixed by $g$. 

Conversely, suppose $c$ is Archimedean, and let $p \in S^1$. If $p$ is in the image of $G$ under the order-preserving embedding used to construct $r_c$, then $p$ must be trivially stabilized. So suppose $p$ is not in this image, and let $g \in G$. We will show $g$ does not fix $p$.

If $g$ is a torsion element, then $g$ is conjugate to a rotation, hence has no fixed points.  So we suppose $g$ is not torsion.

Since $G$ is not infinite cyclic, there exists $h\in G$ such that $h$ and $g$ are not in a cyclic subgroup. Let $n>1$ be the smallest positive integer such that
$$c(e,g,h) \neq c(e,g^n,h).$$

For convenience, we will assume $c(e,g,h) = +1$. Consider the four open intervals $(e,g), (g,h), (h,g^n)$ and $(g^n,e)$ comprising the connected components of $S^1 \setminus \{e,g,h,g^n\}$. Here, the notation $(x,y)$ means the component of $S^1 \setminus \{x,y\}$ such that $(x,p,y)$ is positively oriented for all $p$ in that component. We will consider four cases corresponding to which interval $I_i$ contains $p$. For now, we will assume $n > 2$.  The reader is highly encouraged to draw pictures for what follows.

\begin{enumerate}
\item If $p \in (e,g)$, we consider two subcases. If for all $1< i < n$, $g^i \in (p,g)$, then $p \in (e,g^i)$ for all $i$. Hence,
$$g\cdot p \in g \cdot(e,g^{n-1}) =(g,g^n) \not\ni p.$$
On the other hand, if there exists $1< i <n$ such that $g^i \notin (p,g)$, let $j$ be the smallest such $i$, so that $p \in (e,g^{j-1})$. Then
$$g \cdot p \in g \cdot (e,g^{j-1}) = (g,g^j) \not\ni p.$$
\item If $p \in (g,h)$, we have two subcases similar to before. If for all $1< i < n$, $g^i \in (g,p)$, then
$$g\cdot p \in g \cdot (g^{n-1},e) = (g^n,g) \not\ni p.$$
On the other hand, if there exists $1< i< n$ such that $g^i \notin (g,p)$, let $j$ be the smallest such $i$, so that $p \in (g^{j-1},e)$. Then
$$g \cdot p \in g \cdot (g^{j-1},e) = (g^j,g) \not\ni p.$$
\item If $p \in (h,g^n)$, then $p \in (g^i,e)$ for all $1\leq i < n$. In particular, $p \in (g^{n-1},e)$, hence
$$g \cdot p \in g \cdot (g^{n-1},e) = (g^n,g) \not\ni p.$$
\item If $p \in (g^n,e)$, then, since $g$ is not torsion, $g^n$ is also not torsion. We can now adapt the argument of cases 1 and 2, with $g$ replaced by $g^n$ to show that $g^n$ does not stabilize $p$. Of course this is enough to show $g$ does not stabilize $p$ either.
\end{enumerate}
If $n=2$, then we can go through these cases again. We leave it to the reader to modify the arguments to show the conclusion holds for each.
\end{proof}

%For the converse, suppose $r_c$ is not free. Let $g \in G$ be an element which has a fixed point. Let $I$ be the connected component of the complement of the set of fixed points of $g$ which contains $p$. If there is no $h \in G$ such that $h(p) \notin I$, then the entire orbit of $p$ is contained in $I$. In particular, this implies that $c$ in fact a linear order which is impossible by Lemma \ref{lem:Archimedeangenuine}. Hence, there exists $h \in G$ such that $h(p) \notin I$. In particular, $c(p, r_c(g^n)(p), r_c(h)(p))$ does not depend on $n$, which means $c$ is not Archimedean. 

The proof of Proposition 3.6 of \cite{Navas10} shows that the dynamical realization of an Archimedean linear order as an action on $\RR$ is free. On the other hand, if one views it as a circular order and considers the dynamical realization as an action on $S^1$, such an action necessarily has a global fixed point. In summary, Archimedean linear orders corresponds to free actions on $\RR$, and Archimedean circular orders correspond to free actions on $S^1$. 

H\"{o}lder showed that (see Theorem 6.10 in \cite{Ghys01} for instance) any group acting freely on either $\RR$ or $S^1$ is abelian.  Therefore linear or circular Archimedean orders exist only for abelian groups both for linear and circular orders.  
%Obviously no Archimedean orders lie in $\COfin$.  

%%%%%%%%%%%%%
%%%%%%%%%%%%%
\section{Free products}\label{sec:freeproducts}
One might try to understand circular orders on 3-manifold groups using the amalgamated product presentations arising from Heegaard splittings.  To this end, we initiate a study of circular orders on free products.  We remark that at this time it is unclear how to deal with amalgamations of free products, since, for example, the Weeks' manifold admits no circular orders \cite{CalegariDunfield03}, but there appears to be no known computable criterion for the existence of orders on 3-manifold groups.

\subsection{Existence}\label{ss:freeexistence}
We show here that a free product of groups $G*H$ is circularly orderable if and only if both $G$ and $H$ are circularly orderable.  Of course, by restriction, one direction of this equivalence is obvious.  To prove the other direction, we need the following well-known lemma. 

\begin{lem}\label{lem:ses}
Let $0 \to K \to G \to H \to 0$ be a short exact sequence where $K$ is $\LO$ and $H$ is $\CO$.  Then $G$ is circularly orderable in such a way that the maps $K \to G$ and $G \to H$ are order preserving. 
\end{lem}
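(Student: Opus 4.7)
The plan is to construct $\phi \in \CO(G)$ by combining $c_H$ and $<_K$ lexicographically, in the spirit of the blow-up construction used in Theorem~\ref{thm:freeabelianCO}. Fix a set-theoretic section $s : H \to G$ of the quotient map $\pi : G \to H$ with $s(e_H) = e_G$, so that every $g \in G$ factors uniquely as $g = s(\pi(g))\, k(g)$ with $k(g) \in K$.

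Define $\phi : G^3 \to \{-1,0,+1\}$ by setting $\phi = 0$ on degenerate triples, and for a triple of distinct elements $(g_1, g_2, g_3)$, writing $h_i = \pi(g_i)$ and $k_i = k(g_i)$:
\begin{itemize}
\item if $h_1, h_2, h_3$ are all distinct, set $\phi(g_1, g_2, g_3) := c_H(h_1, h_2, h_3)$;
\item if $h_1 = h_2 = h_3$, set $\phi(g_1, g_2, g_3) := c_{<_K}(k_1, k_2, k_3)$, where $c_{<_K}$ is the circular order on $K$ associated to $<_K$ as in Section~\ref{ss:threeperspectives};
\item if exactly two coincide, let $(i,j,k)$ be the cyclic permutation of $(1,2,3)$ with $h_i = h_j \neq h_k$, and declare $\phi(g_1, g_2, g_3) := +1$ iff $k_i <_K k_j$.
\end{itemize}
Properties (DV), (IC), and (AT) of Definition~\ref{defn:CO} are immediate from the three-way definition, using the corresponding properties of $c_H$ and $c_{<_K}$.

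Left-invariance (H) can be checked as follows. Since $\pi(g \cdot g_i) = \pi(g) h_i$, the equality pattern among the $h_i$'s is preserved, so the three cases are stable under left multiplication by $g \in G$. In the first and second cases, left-invariance reduces to left-invariance of $c_H$ and of $c_{<_K}$ respectively. In the third case, using normality of $K$ to rewrite $k(g)s(h_i) = s(h_i)(s(h_i)^{-1}k(g)s(h_i))$, one computes that $k(g g_i)$ and $k(g g_j)$ differ from $k_i$ and $k_j$ by left multiplication by the same element of $K$, so their $<_K$-comparison is preserved. The order-preservation claims of the lemma then follow: triples drawn from $K \subset G$ lie in the second case and reproduce $c_{<_K}$, while lifts of triples of distinct elements of $H$ lie in the first case and reproduce $c_H$.

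The main obstacle is verifying the cocycle condition (C) on a $4$-tuple $(g_0, g_1, g_2, g_3)$. I would proceed by case analysis on the equality pattern of $(h_0, h_1, h_2, h_3) \in H^4$. When all four are distinct, (C) reduces directly to the cocycle identity for $c_H$; when all four coincide, it reduces to the cocycle identity for $c_{<_K}$. The remaining intermediate patterns (one $h_i$ differing from the other three, two-and-two, or three-and-one) each admit a short bookkeeping verification: collapsing equal $h_i$'s and applying (C) for $c_H$ to the resulting degenerate quadruple reduces the desired identity to one on a single $K$-fiber, which then follows from the linearity of $<_K$ together with the third-case rule above. Once (C) is established, $\phi \in \CO(G)$ and the lemma is proved.
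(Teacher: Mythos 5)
Your construction is correct and is precisely the classical lexicographic construction that the paper invokes: the paper's proof of Lemma \ref{lem:ses} is a one-line citation to Lemma 2.2.12 of \cite{Calegari04}, and your recipe (compare by $c_H$ on images when they are distinct, by the left-invariant linear order on the fiber when exactly two images agree, and by $c_{<_K}$ when all three agree) is that construction, which the paper itself essentially reproduces later in the more general Lemma \ref{lem:CO_Gset}. The intermediate cocycle cases that you only sketch do all close up under your sign conventions (your rule that increasing triples in a fiber are positively oriented is the one forced by your two-coincide rule), so there is no gap.
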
 

\begin{proof}
This is rather classical. See, for instance, Lemma 2.2.12 of \cite{Calegari04}.
\end{proof} 

Now we prove the converse: 
\begin{thm}\label{thm:free}
Let $G$ and $H$ be groups with circular orders $c_G$ and $c_H$. Then $G \ast H$ is orderable in a way that extends $c_G$ and $c_H$.
\end{thm}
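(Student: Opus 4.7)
The plan is to use Lemma \ref{lem:ses} applied to a short exact sequence built from the dynamical realizations of $c_G$ and $c_H$. Let $\rho_G: G \to \Homeop(S^1)$ and $\rho_H: H \to \Homeop(S^1)$ be the dynamical realizations given by Proposition \ref{prop:faithfulactioniffCO}; these are faithful by construction. The universal property of the free product yields a single homomorphism $\rho: G * H \to \Homeop(S^1)$ simultaneously extending $\rho_G$ and $\rho_H$; this $\rho$ will generally fail to be injective. Set $N := \ker \rho$.

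The heart of the argument is to show $N$ is free by applying the Kurosh subgroup theorem. For every $x \in G*H$, the restriction of $\rho$ to the conjugate $xGx^{-1}$ is a conjugate of $\rho_G$ and hence injective, so $N \cap xGx^{-1} = \{e\}$; symmetrically $N \cap xHx^{-1} = \{e\}$ for all $x$. The Kurosh decomposition of $N$ as a subgroup of $G*H$ writes $N$ as a free product of a free group together with conjugates of subgroups of $G$ and $H$ lying in $N$; all the conjugate factors vanish, so $N$ is a free group. Magnus's classical theorem gives that free groups are linearly orderable, hence $N$ admits a linear order.

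Meanwhile the quotient $(G*H)/N$ is realized as a faithful subgroup of $\Homeop(S^1)$ via $\rho$, so by Proposition \ref{prop:faithfulactioniffCO} it is circularly orderable. Applying Lemma \ref{lem:ses} to the short exact sequence $1 \to N \to G * H \to (G*H)/N \to 1$ produces a circular order $c$ on $G * H$ for which the inclusion of $N$ and the projection to $(G*H)/N$ are both order-preserving. To verify that $c$ restricts to $c_G$ on $G$: since $N \cap G = \{e\}$, the composition $G \hookrightarrow G * H \twoheadrightarrow (G*H)/N$ is injective onto $\rho_G(G)$, and the ambient circular order on $(G*H)/N$ pulled back to $\rho_G(G)$ is $c_G$ by the construction of the dynamical realization. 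The order-preserving property transfers this back to $G \subset G*H$; the argument for $H$ is identical.

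The main technical step is the Kurosh analysis of $N$; the rest is a chain of invocations of facts already established (Proposition \ref{prop:faithfulactioniffCO}, Lemma \ref{lem:ses}) together with the classical linear orderability of free groups. The proof is nonconstructive in the sense that we never write down an explicit cocycle on $G*H$, which is the motivation for the alternative, constructive proof given in Theorem \ref{thm:freeproducts}.
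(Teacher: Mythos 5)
Your proof is correct and takes essentially the same route as the paper's: combine the dynamical realizations of $c_G$ and $c_H$ via the universal property of the free product, show the kernel of the resulting action is free using the Kurosh subgroup theorem, and conclude with Lemma \ref{lem:ses}. The extra details you supply (checking all conjugates in the Kurosh decomposition, and verifying that the resulting order restricts to $c_G$ and $c_H$) are refinements of steps the paper leaves implicit.
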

\begin{proof} Since $G$ and $H$ are circularly orderable, they act faithfully on $S^1$. By the universal property of free products, this defines an action of $G\ast H$ on $S^1$, call it $\rho: G*H \to S^1$. Since $\im(\rho)$ acts faithfully on $S^1$, $\im(\rho)$ is circularly orderable. On the other hand, $\ker(\rho)$ is a normal subgroup of $G \ast H$ that does not intersect $G$ or $H$, since each of $G$ and $H$ acts faithfully. Therefore, by the Kurosh subgroup theorem, $\ker(\rho)$ is a free group. In particular, $\ker(\rho)$ is $\LO$. By Lemma \ref{lem:ses}, $G\ast H$ is $\CO$. 
\end{proof} 

The previous theorem gives an existence result, but is non-constructive. In the next result, we construct an explicit order on $G \ast H$ extending initial orders on $G$ and $H$.

\begin{thm}\label{thm:freeproducts}
Let $G$ and $H$ be circularly ordered groups, with orders $c_G$ and $c_H$.  Then there exists a unique circular order $c$ on $G*H$ that is lexicographical with respect to $c_G$ and $c_H$.  More precisely, there exists a unique circular order $c$ on $G*H$ that satisfies the initial conditions
$$c\mid G^3 = c_G, \ \ c\mid H^3 = c_H, \ \ c(e,g,h) = +1$$
$$c(g_1,g_2,h) = c_G(g_1,g_2,e), \ \text{ and } \ c(g,h_1,h_2) = c_H(e,h_1,h_2)$$
for all $g,g_1,g_2 \in G \setminus \{e\}$, and $h,h_1,h_2 \in H \setminus \{e\}$, together with the lexicographical condition
$$c(xw_1,w_2,w_3) = c(x, w_2,w_3)$$
for all reduced words $xw_1,w_2$ and $w_3$ in $G*H$ such that $x$ is not the leftmost letter of $w_2$ or $w_3$.
\end{thm}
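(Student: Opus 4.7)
The plan is to handle uniqueness and existence separately. Uniqueness rests on a length-reduction algorithm for evaluating $c$ on a triple of reduced words, using the lex condition, left-invariance, and the consequence (IC) of cyclic invariance. Existence is obtained by constructing a dynamical realization of $G*H$ on $S^1$ whose induced order satisfies the initial conditions and the lex condition, and then invoking uniqueness to identify it with the algorithmically defined $c$.

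For uniqueness, I induct on the total reduced-word length $|w_1|+|w_2|+|w_3|$ of a triple of distinct reduced words in $G*H$. I always have two moves available. First, if at least two of the three words begin with the same letter $x$ (an element of $G\setminus\{e\}$ or $H\setminus\{e\}$), I left-multiply the triple by $x^{-1}$: the two matching words shorten by one, and the third word either shortens by one (if it also starts with $x$), stays the same length (if its first letter lies in the same factor as $x$ but is not $x$), or lengthens by one (if its first letter lies in the opposite factor); using that reduced words in a free product alternate between $G$-letters and $H$-letters, the total length strictly decreases in every subcase. Second, if the three first letters are pairwise distinct (with some possibly equal to $e$), I iterate the lex condition in each slot via cyclic permutations, replacing each $w_i$ by its first letter $y_i$; this uses that the lex hypothesis requires the chosen first letter to not be leftmost in the other two arguments, which is exactly the pairwise-distinct condition. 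The resulting base case $c(y_1, y_2, y_3)$ on single-letter arguments is then completely determined by the initial conditions together with (IC) and (AT).

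For existence, I define $c$ to be the value output by the reduction procedure above. Conditions (DV) and (H) are built into the algorithm. The cocycle condition (C) is the hard step; rather than checking it combinatorially on all four-tuples, I would construct a faithful homomorphism $\rho: G*H \to \Homeop(S^1)$ with a distinguished trivially stabilized marked point $p$, and verify that the circular order $c'$ induced by $(\rho, p)$ satisfies the initial and lex conditions. Uniqueness then forces $c'=c$, and because $c'$ is pulled back from an action on $S^1$, it automatically satisfies (C). The action $\rho$ is built by an iterated blow-up along the Bass-Serre tree of $G*H$: starting from the dynamical realizations $r_G, r_H$ on $S^1_G, S^1_H$ with marked points $p_G, p_H$, I blow up each point in the $G$-orbit of $p_G$ into a small arc, glue in a rescaled copy of $(S^1_H, p_H)$ at the arc containing $p_G$, extend $G$-equivariantly to the other arcs of the $G$-orbit, and then iterate along the branches of the Bass-Serre tree with arc sizes shrinking geometrically.

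The main obstacle is the convergence step in the dynamical construction: one has to check that the iterated gluings converge to a bona fide topological circle and that the limit action is by orientation-preserving homeomorphisms with $p$ trivially stabilized. Once that is secured, verifying the lex condition for $c'$ is transparent from the geometry, since prepending a letter $x$ to a reduced word $w$ only moves the corresponding point $\rho(xw)(p)$ into a deeper sub-arc of the tree without affecting its large-scale cyclic position relative to points sitting in other branches; this is precisely the content of $c'(xw_1, w_2, w_3) = c'(x, w_2, w_3)$ when the first letters of $w_2, w_3$ differ from $x$.
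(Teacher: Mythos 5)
Your uniqueness argument is essentially the paper's: the three moves you describe (left-multiplication by the inverse of a shared leading letter, and truncation to leading letters via the lexicographical condition and cyclic invariance) are the same reductions the paper organizes via the diamond lemma, and your observation that they strictly decrease total word length and terminate in $(G\cup H)^3$, where the initial conditions together with (IC) and (AT) pin down the value, is correct. That half is fine.

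The existence half has a genuine gap, and it is precisely the one the paper flags. The paper presents your iterated blow-up construction (its graph $\Gamma_\infty$ built from nested copies of $S^1_G$ and $S^1_H$ along the Bass--Serre tree) only as an illustrative sketch, and then explicitly declines to use it: ``The main difficulty lies in showing the final graph in the construction is circularly orderable as a set, which would amount to checking a cocycle condition anyway.'' You identify the same obstacle --- that the iterated gluings converge to a genuine circle carrying an orientation-preserving $G*H$-action with a trivially stabilized marked point --- but you do not resolve it. This is not a routine convergence check: at each stage you must blow up a (possibly dense) orbit into arcs of summable length, extend the previously constructed homeomorphisms across the new arcs equivariantly, insert rescaled copies of the other factor's action, and show the infinite iteration still yields homeomorphisms of $S^1$ and a free orbit of $p$; verifying that the resulting cyclic order on the orbit is well defined and $G*H$-invariant is equivalent to the cocycle identity you are trying to avoid. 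The paper's actual proof fills this hole combinatorially: it defines $c$ by the minimal reduction of a triple and then verifies $dc=0$ directly, by a Noetherian induction on quadruples of reduced words ordered by the subword relation, with the base cases in $(G\cup H)^4$ checked from the initial conditions and eight inductive cases according to which of $w_2,w_3,w_4$ share the leading letter of the longest word. Some version of that verification (or an honest, detailed construction of the limiting action) is required; as written, your existence step defers the entire difficulty to an unproved claim.
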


Before proving this, let's consider an example to clarify what we mean by lexicographical order, and to indicate where the cocycle description in the theorem comes from.  In doing so, we will give a sketch of the proof from a topological perspective.  The only essential detail missing is a proof that the final subset $\Gamma_\infty$ of the plane is circularly ordered.

Let $G = \ZZ = \langle a \rangle$ and $H = \ZZ/3\ZZ = \langle b \mid b^3 = e\rangle$.  Let $c_G$ be a rotation order on $G$ with rotation angle $\theta$, and let $c_H$ be the order realized by
$$\begin{aligned} r:\ZZ/3 \ZZ &\to S^1 \\ b^k &\mapsto k/3.\end{aligned}$$
Let $S^1_G$ be a copy of the circle together with an orientation-preserving embedding $G \into S^1_G$, which we think of as a marking of some points of $S^1_G$ by elements of $G$.  Similarly, let $S^1_H$ be a copy of $S^1$, together with the three points $0,1/3$ and $2/3$ marked by the corresponding elements of $\ZZ/3\ZZ$.  Wedge the two circles together at 0 to form a planar graph
$$\Gamma := S^1_G \vee S^1_H$$
as in Figure \ref{fig:preseed}.  We shall blow-up $\Gamma$ to a seed $\Gamma_0$ which generates a new planar graph $\Gamma_\infty$ on which $G*H$ will act faithfully.

\begin{figure}[h]
\begin{center}
\includegraphics{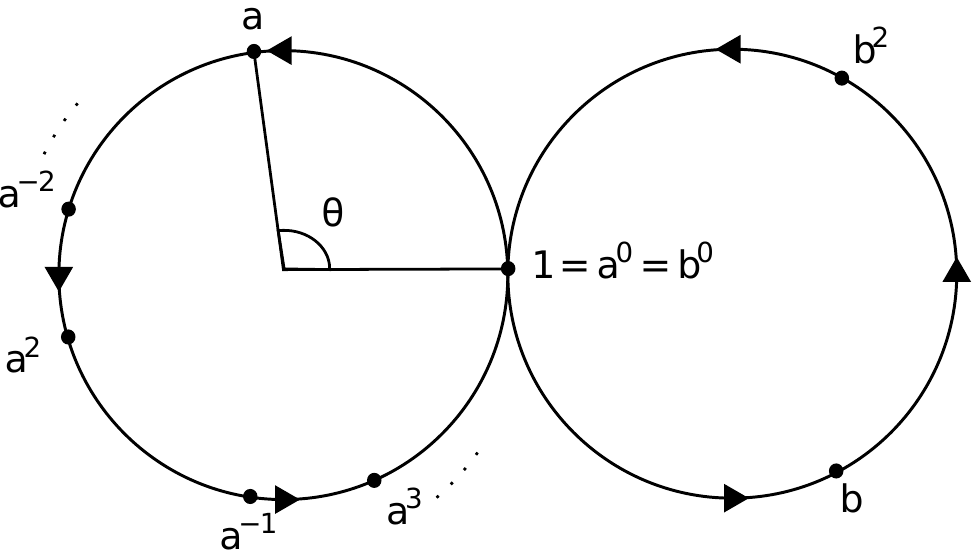}
\caption{The marked wedge $S^1_G \vee S^1_H$.}
\label{fig:preseed}
\end{center}
\end{figure}

First modify $\Gamma$ by blowing up every point in $G\cup H \subset \Gamma$ to an interval, and marking one of the endpoints of the resulting interval with the same marking as before.  Be sure to pick the same endpoint for all intervals (with respect to the orientation of $\Gamma$), as in Figure \ref{fig:seed}.  Call this graph $\Gamma_0$.

\begin{figure}[h]
\begin{center}
\includegraphics{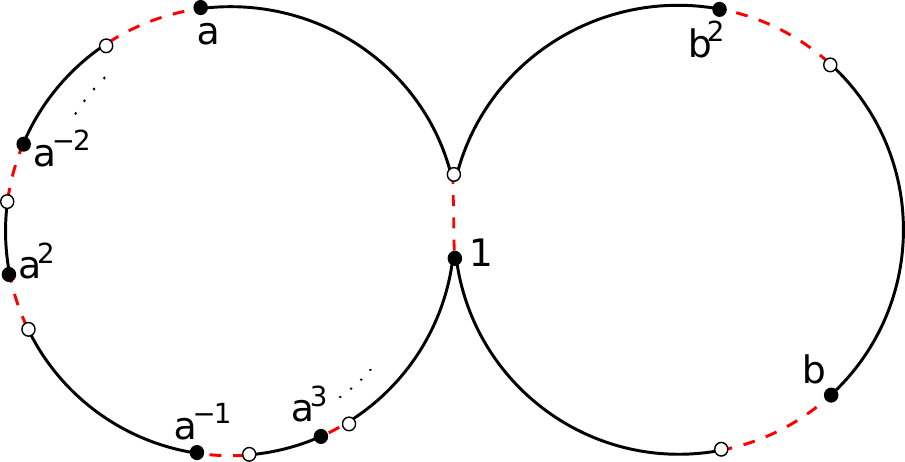}
\caption{The seed $\Gamma_0$.}
\label{fig:seed}
\end{center}
\end{figure}

Now let $w \in G*H$ be any word of length one.  That is, $w$ is an element of either $G$ or $H$.  If $w$ is in $H$, take a copy of $S^1_G$ together with the marking by $G$, and relabel every marked point by appending $w$ onto the left end.  Call this marked circle $S^1_{w\cdot G}$.  Blow it up along the marked points and glue the result onto $\Gamma_0$ along the edge that contains the marking $w$, so that this piece is contained in the closure of the unbounded component of $\RR^2 \setminus \Gamma_0$.  Do this for all the elements $w$ in $G$ and $H$.  Of course, this requires a choice of how to squeeze the infinitely many circles into the plane, but these choices will not affect the final order we construct.  Call the resulting planar graph $\Gamma_1$.  Supposing inductively that $\Gamma_l$ is construction, we can repeat a similar procedure to construct $\Gamma_{l+1}$.  See Figure \ref{fig:intermediate}.

\begin{figure}[h]
\begin{center}
\includegraphics{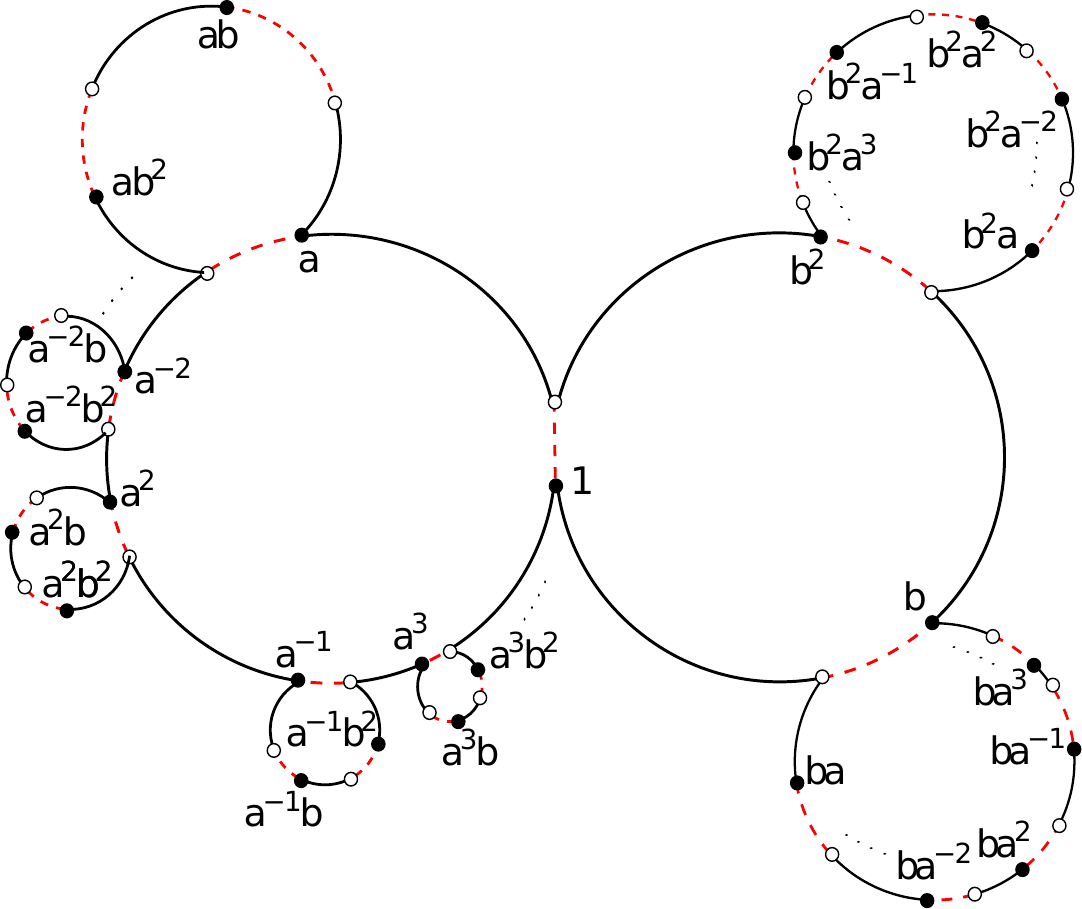}
\end{center}
\caption{$\Gamma_1$, an intermediate step in the construction of $\Gamma_\infty$.}
\label{fig:intermediate}
\end{figure}

Notice that $\Gamma_l \subset \Gamma_{l+1}$ for all $l$.  Define
$$\Gamma_\infty' := \bigcup_{l \geq 0} \Gamma_l.$$
$G*H$ acts on $\Gamma_\infty'$ in a way that continuously extends the actions of $G$ and $H$ on the blown-up copies of $S_G^1$ and $S_H^1$ inside $\Gamma_\infty'$, respectively.  Let $E$ be the set of points in the interiors of the internal edges of $\Gamma_\infty'$.  That is, $E$ consists of the orbit of the interiors of the edges constructed when blowing up $\Gamma$ to $\Gamma_0$.  Define
$$\Gamma_\infty := \Gamma_\infty' \setminus E.$$
$\Gamma_\infty$ contains the orbit of the point marked e, which moreover is trivially stabilized.  Then the order structure $c$ induced by the free action of $G*H$ on e extends $c_G$ and $c_H$.

To connect this to Theorem \ref{thm:freeproducts}, observe that the marking of the seed $\Gamma_0$ contains the data of the initial conditions mentioned in the theorem.  The lexicographical condition can be understood by observing that the order of a triple of long words in $G*H$ alternating between $G$ and $H$ depends only on their leftmost letter, so long as the leftmost letters of the three words are distinct; this is clearly the case with the dynamical construction we just outlined.

One might try to adapt this example to prove Theorem \ref{thm:freeproducts}.  The main difficulty lies in showing the final graph in the construction is circularly orderable as a set, which would amount to checking a cocycle condition anyway.

\begin{proof}[Proof of Theorem \ref{thm:freeproducts}]
Using the diamond lemma, we first prove uniqueness, after which it will be easy to define $c$.  Consider the following three types of reductions on the set $(G*H)^3$ of ordered triples of reduced words in $G * H$:
\begin{enumerate}
\item If $x \in G \cup H$ and $w_1 = xw_1', w_2=xw_2'$ and $w_3 = xw_3'$ are reduced,
$$(w_1,w_2,w_3) \mapsto (w_1',w_2',w_3').$$
\item If $x \in G \cup H$ is the leftmost letter of precisely two words, left multiply the triple by $x^{-1}$ and reduce.  For example, if $w_1 = xw_1'$, $w_3 = xw_3'$ and $w_2$ does not start with $x$, then
$$(w_1,w_2,w_3) \mapsto (w_1',x^{-1}w_2,w_3'),$$
where it is understood that $x^{-1}w_2$ has been reduced if need be.  There are two more versions of this move, which we do not make explicit.
\item If $x \in G \cup H$ is the leftmost letter of precisely one word, remove the subword to the right of $x$.  For example, if $w_2 = xw_2'$ and neither $w_1$ nor $w_3$ begins with $x$, then
$$(w_1,w_2,w_3) \mapsto (w_1,x,w_3).$$
\end{enumerate}
All three reductions strictly decrease the sum of the three word lengths involved.  Thus there are no infinite sequences of reductions.  It is completely straightforward to check cases and show that two reductions of a fixed triple have a mutual reduction. Indeed, if a triple can have a type 1 reduction applied, it is unique, and that triple can not have a type 2 or type 3 applied. If a triple can have a type 2 reduction applied, then it can always also have a unique type 3 reduction applied. For instance, let $w_1 = xw_1', w_2 = yw_2', w_3 = xw_3'$ be reduced words. Then the following digram shows the two different sequences of reductions to reach a mutual reduction. 
\[
\begin{tikzcd}
{} & (xw_1', yw_2', x w_3') \arrow[swap]{dl}{2.} \arrow{dr}{3.} & {}\\
(w_1', x^{-1}yw_2', w_3') \arrow[swap]{dr}{3.} & {} & (xw_1', y, xw_3')\arrow{dl}{2.}  \\
{} & (w_1', x^{-1}y, w_3') & {}
\end{tikzcd}
\]
The final case to consider is a triple that admits three different type 3 reductions. More precisely, let $w_1 = xw_1', w_2 = yw_2', w_3 = zw_3'$, where $x,y$ and $z$ are all distinct. Then clearly $(x,y,z)$ is a mutual reduction for all three of the type 3 reductions of $(w_1,w_2,w_3)$. We conclude by the diamond lemma (also known as Newman's lemma) that every triple can be reduced to a unique minimal triple, \ie a triple which admits no reductions.  Moreover, it is easy to verify that the minimal triples are precisely the elements of $(G\cup H)^3$.

Any circular order $c$ satisfying the lexicographical condition in the statement of the theorem must be invariant under the 3 reductions.  Indeed, Reductions 1 and 2 follow from left invariance, and Reduction 3 follows from the cocycle condition together with the lexicographical condition.  Since the minimal reduction of every triple is in $(G \cup H)^3$, $c$ is uniquely specified by imposing the initial conditions.  So we define the unique lexicographical extension of $c_G$ and $c_H$ that satisfies the initial conditions of the theorem by
$$c(w_1,w_2,w_3) := c(x,y,z),$$
where $(x,y,z) \in (G \cup H)^3$ is the minimal reduction of $(w_1,w_2,w_3)$.

Note that homogeneity of $c$ follows immediately from the definition of the minimal reduction of a triple. We conclude by using induction to show that $c$ is a cocycle.  Our induction occurs inside $(G*H)^4$, where $(w_1,w_2,w_3,w_4) \geq (w_1',w_2',w_3',w_4')$ if each $w_i'$ is a subword of $w_i$. It is straightforward to show this is a Noetherian poset, so our induction is well-founded.

Before beginning, we observe that since the minimal reduction of a triple is equivariant with respect to the action of the symmetric group $S_3$ on triples, $c$ is invariant under cyclic permutations of its input and antisymmetric with respect to transposing two of its arguments. This uses the initial conditions and the fact that $c_G$ and $c_H$ both have both of these properties. We will need these properties in what follows.

For the base cases, consider a quadruple $(w,x,y,z) \in (G\cup H)^4$.  We need to show
$$dc(w,x,y,z) = 0,$$
where
$$dc(w,x,y,z) = c(x,y,z) - c(w,y,z) + c(w,x,z) - c(w,x,y).$$
If all four elements are in $G$ or all four are in $H$, $dc(w,x,y,z) = 0$ because $c_G$ and $c_H$ are cocycles.  Suppose, by way of example, that $(w,x,y,z) = (g_1,h_1,h_2,g_2)$ where $g_1,g_2 \in G \setminus \{e\}$ and $h_1,h_2 \in H \setminus \{e\}$.  Then
$$\begin{aligned} dc(g_1,h_1,h_2,g_2) &= c(h_1,h_2,g_2) - c(g_1,h_2,g_2) + c(g_1,h_1,g_2) - c(g_1,h_1,h_2)\\
&=c_H(h_1,h_2,e) - c_G(g_1,e,g_2) + c_G(g_1,e,g_2) - c_H(e,h_1,h_2) \\&= 0.\end{aligned}$$
All the cases---namely, $(g_1,g_2,g_3,h_1)$, $(g_1,g_2,h_1,h_2)$, $(g_1,h_1,h_2,h_3)$ and their permutations---are similar. Indeed, the initial conditions immediately imply
$$dc(g_1,g_2,g_3,h_1) = dc(g_1,g_2,g_3,e)$$
and
$$dc(g_1,h_1,h_2,h_3) = dc(e,h_1,h_2,h_3),$$
so these cases (and similarly, their permutations) easily follow because $dc|G = 0$ and $dc|H = 0$, respectively. The remaining three cases are the permutations of $(g_1,g_2,h_1,h_2)$ (modulo symmetry of the cases with respect to reindexing and switching $G$ and $H$), one of which we showed above, and the other two of which are here:
$$\begin{aligned} dc(g_1,g_2,h_1,h_2) &= c(g_2,h_1,h_2) - c(g_1,h_1,h_2) + c(g_1,g_2,h_2) - c(g_1,g_2,h_1)\\
&=c(e,h_1,h_2) - c(e,h_1,h_2) + c(g_1,g_2,e) - c(g_1,g_2,e) = 0\end{aligned}$$
and
$$\begin{aligned} dc(g_1,h_1,g_2,h_2) &= c(h_1,g_2,h_2) - c(g_1,g_2,h_2) + c(g_1,h_1,h_2) - c(g_1,h_1,g_2)\\
&= c(h_1,e,h_2) - c(g_1,g_2,e) + c(e,h_1,h_2) - c(g_1,e,g_2) = 0.\end{aligned}$$

Consider a quadruple of reduced words, which, without loss of generality, we suppose is of the form $(xw_1,w_2,w_3,w_4)$ where $xw_1$ is the longest word in the quadruple. (To see why this is acceptable, note that if $c$ is symmetric with respect to cyclic permutations, and antisymmetric with respect to transpositions, then
$$dc(w,x,y,z) = - dc(x,y,z,w),$$
so that one side is 0 if and only if the other side is too.) In particular, we assume $w_1$ is not the empty word, since that would put us back in the base case. We suppose inductively that for every $(v_1,v_2,v_3,v_3) \leq (xw_1,w_2,w_3,w_4)$, $dc(v_1,v_2,v_3,v_4) = 0$. By definition
$$dc(xw_1,w_2,w_3,w_4) = c(w_2,w_3,w_4) - c(xw_1,w_3,w_4) + c(xw_1,w_2,w_4) - c(xw_1,w_2,w_3).$$
To compute any further, we consider several cases, based on the combinatorics of the reduced words:
\begin{enumerate}
\item $x$ does not begin $w_2,w_3$ or $w_4$
$$\begin{aligned} dc(xw_1,w_2,w_3,w_4) &= c(w_2,w_3,w_4) - c(xw_1,w_3,w_4) + c(xw_1,w_2,w_4) - c(xw_1,w_2,w_3) \\ &= c(w_2,w_3,w_4) - c(x,w_3,w_4) + c(x,w_2,w_4) - c(x,w_2,w_3) \\
&= dc(x,w_2,w_3,w_4) = 0,\end{aligned}$$
where the last equality follows by induction, and the assumption that $w_1$ is not the empty word. The next three cases will not need induction.
\item $w_2 = xw_2'$, $x$ does not begin $w_3$ or $w_4$
$$\begin{aligned} dc(xw_1,w_2,w_3,w_4) &= dc(xw_1,xw_2',w_3,w_4) \\
&=c(xw_2',w_3,w_4) - c(xw_1,w_3,w_4) + c(xw_1,xw_2',w_4) - c(xw_1,xw_2',w_3)\\
&=c(x,w_3,w_4) - c(x,w_3,w_4) + c(w_1,w_2',x^{-1}w_4) - c(w_1,w_2',x^{-1}w_3)\\
&= 0 + c(w_1,w_2',x^{-1}) - c(w_1,w_2',x^{-1}) =0. \end{aligned}$$
\item $w_3 = xw_3'$, $x$ does not begin $w_2$ or $w_4$
$$\begin{aligned} dc(xw_1,w_2,w_3,w_4) &= dc(xw_1,w_2,xw_3',w_4) \\
&=c(w_2,xw_3',w_4) - c(xw_1,xw_3',w_4) + c(xw_1,w_2,w_4) - c(xw_1,w_2,xw_3')\\
&=c(w_2,x,w_4) - c(w_1,w_3',x^{-1}w_4) + c(x,w_2,w_4) - c(w_1,x^{-1}w_2,w_3')\\
&=c(w_2,x,w_4) + c(x,w_2,w_4) - c(w_1,w_3',x^{-1}) - c(w_1,x^{-1},w_3')\\
&= 0 - 0 = 0. \end{aligned}$$
\item $w_4 = xw_4'$, $x$ does not begin $w_2$ or $w_3$
$$\begin{aligned} dc(xw_1,w_2,w_3,w_4) &= dc(xw_1,w_2,w_3,xw_4')\\
&=c(w_2,w_3,xw_4') - c(xw_1,w_3,xw_4') + c(xw_1,w_2,xw_4') - c(xw_1,w_2,w_3)\\
&=c(w_2,w_3,x) - c(w_1,x^{-1}w_3,w_4') + c(w_1,x^{-1}w_2,w_4') - c(x,w_2,w_3)\\
&=c(w_2,w_3,x) - c(x,w_2,w_3) - c(w_1,x^{-1},w_4') + c(w_1,x^{-1},w_4')\\
&= 0.\end{aligned}$$
\item $w_2 = xw_2'$, $w_3=xw_3'$ and $x$ does not begin $w_4$. Write $w_4 = yw_4'$. Then
$$\begin{aligned} dc(xw_1,w_2,w_3,w_4) &= dc(xw_1,xw_2',xw_3',yw_4')\\
&=c(xw_2',xw_3',yw_4') - c(xw_1,xw_3',yw_4') + c(xw_1,xw_2',yw_4') - c(xw_1,xw_2',xw_3')\\
&=c(w_2',w_3',(x^{-1}y)w_4') - c(w_1,w_3',(x^{-1}y)w_4') + c(w_1,w_2',(x^{-1}y)w_4') - c(w_1,w_2',w_3')\\
&=c(w_2',w_3',(x^{-1}y)) - c(w_1,w_3',(x^{-1}y)) + c(w_1,w_2',(x^{-1}y)) - c(w_1,w_2',w_3')\\
&=c(xw_2',xw_3',y) - c(xw_1,xw_3',y) + c(xw_1,xw_2',y) - c(xw_1,xw_2',xw_3')\\
&=dc(xw_1,xw_2',xw_3',y)=0,\end{aligned}$$
where the fourth equality follows from reduction 1 (since $(x^{-1}y)$ can't begin $w_1,w_2'$ or $w_3'$), and the last equality follows by induction. Note that if $w_4'$ is the empty word, this calculation has not shown anything, so we have some subcases. Write $xw_1 = vav_1, w_2=xw_2'=vbv_2, w_3=xw_3'=vcv_3, w_4=y$, where $v$ is the longest rightmost common subword between $xw_1,w_2$ and $w_3$, and $a,b,c\in G\cup H$. In particular, $a,b$ and $c$ are not all the same.

Suppose $a,b$ and $c$ are all distinct, and let $d \in G\cup H$ be the first letter of $v^{-1}$. Then by homogeneity and type 3 reductions
$$\begin{aligned} dc(xw_1,w_2,w_3,w_4) &= dc(av_1,bv_2,cv_3,v^{-1}y) \\
&= c(bv_2,cv_3,v^{-1}y) - c(av_1,cv_3,v^{-1}y) + c(av_1,bv_2,v^{-1}y) - c(av_1,bv_2,cv_3) \\
&= c(b,c,d) - c(a,c,d) + c(a,b,d) - c(a,b,c) \\
&= dc(a,b,c,d)=0,\end{aligned}$$
where the last equality follows from the base case.

If precisely two of $a,b$ and $c$ are equal, then we proceed as in Cases (2), (3) or (4) above.
\item $w_2 = xw_2'$, $w_4=xw_4'$ and $x$ does not begin $w_3$. Follows like (5).
\item $w_3 = xw_3'$, $w_4=xw_4'$ and $x$ does not begin $w_2$. Follows like (5).
\item $w_2=xw_2', w_3 = xw_3', w_4 = xw_4'$.
$$\begin{aligned} dc(xw_1,w_2,w_3,w_4) &= dc(xw_1,xw_2',xw_3',xw_4') \\ &= dc(w_1,w_2',w_3',w_4') = 0,\end{aligned}$$
by homogeneity and induction.
\end{enumerate}
This completes the proof that $c$ is a cocycle.
\end{proof}

%Suppose inductively that $dc(w_1,w_2,w_3,w_4) = 0$.   Let  We argue again by example, leaving the other cases for the reader to verify.  Let $x \in G \cup H\setminus \{e\}$ and suppose $xw_1$ is reduced.  Furthermore, suppose $x$ does not begin $w_2$ or $w_4$, but $w_3 = xw_3'$.  Then
%$$\begin{aligned} dc(xw_1,w_2,w_3,w_4) &= c(w_2,w_3,w_4) - c(xw_1,w_3,w_4) + c(xw_1,w_2,w_4) - c(xw_1,w_2,w_3) \\
%&= c(w_2,xw'_3,w_4) - c(xw_1,xw'_3,w_4) + c(xw_1,w_2,w_4) - c(xw_1,w_2,xw'_3)\\
%&= c(w_2,x,w_4) - c(w_1,w'_3,x^{-1}w_4) + c(x,w_2,w_4) - c(w_1,x^{-1}w_2,w'_3)\\
%&= - c(w_1,w'_3,x^{-1}) - c(w_1,x^{-1},w'_3)\\
%&= 0,\end{aligned}$$
%where the cancellations
%$$c(w_2,x,w_4) + c(x,w_2,w_4) = 0$$
%and
%$$c(w_1,w'_3,x^{-1}) + c(w_1,x^{-1},w'_3) = 0,$$
%occur because, as observed above, $c$ is antisymmetric with respect to transpositions.

Theorem \ref{thm:freeproducts} says there is a unique way to extend circular orders on two groups to a circular order $c$ on their free product satisfying certain initial conditions.  The initial conditions could also be described as an ``interleaving pattern."  It could be interesting to understand, for two fixed starting orders, which interleaving patterns are admissible for constructing a (unique) extending circular order.  Loosely speaking, the proof of the next Theorem \ref{thm:COminperfect} exploits perturbations of interleaving patterns to show a certain subset of $\CO(G*H)$ has no isolated points.  We shall use very special perturbations though, for which we can guarantee that the stabilizer of the marked point with respect to the perturbed action is free (and hence linearly orderable).  For arbitrary perturbations, it is unclear how to understand the resulting stabilizer.

%%%%%%%%%%%%%
%%%%%%%%%%%%%
\subsection{Abundance}\label{ss:freeabundance}
Rivas showed that the space of linear orders of a free group $\LO(F_n)$ does not have an isolated point, and, hence, is a Cantor set \cite{Rivas12}.  In this subsection, we give a partial generalization of the result to circular orders. 

Before we proceed to our main result of this section, we note that Lemma \ref{lem:ses} admits a dynamical interpretation, which we formalize below. 

Let $X$ be a $G$-set, i.e., a set which admits a left $G$-action. A circular order on $X$ is said to be $G$-invariant if for each $g \in G$, $X \xrightarrow{g} X, x \mapsto gx$, is order-preserving. More precisely, for all $g \in G$, $x_1, x_2, x_3 \in X$, one has $c(x_1, x_2, x_3) = c(gx_1, gx_2, gx_3)$. The following lemma slightly generalizes Lemma \ref{lem:ses}, and since we found no literature stating this in this generality, we include the proof. 

\begin{lem} 
\label{lem:CO_Gset}
Let $X$ be a $G$-set with a $G$-invariant circular order. Suppose $\Stab_G(x_0)$ is $\LO$ for some $x_0 \in X$. Then $G$ is $\CO$ so that the inclusion map $\Stab_G(x_0) \to G$ and the map $\phi: G \to X, g \mapsto g x_0$, respect the orders in an appropriate sense.  
\end{lem}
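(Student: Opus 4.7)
The plan is to adapt the blow-up/dynamical realization strategy used in Section 2.1. Since the orbit $G \cdot x_0 \subseteq X$ inherits a $G$-invariant circular order and is countable (by the standing countability assumption on $G$), first realize it as an order-preserving embedding $j: G \cdot x_0 \hookrightarrow S^1$. Let $r: H \to \Homeop^+(\RR)$ be a dynamical realization of the linear order on $H := \Stab_G(x_0)$, with a trivially stabilized point $q_0 \in \RR$.

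Next, blow up each point of $j(G \cdot x_0)$ in $S^1$ to a closed interval $I_y$, forming a new circle $\tilde S^1$. On $I_{x_0}$ install the $H$-action by $r$, and extend to a $G$-action as follows: choose coset representatives $\bar g_y \in G$ with $\bar g_y \cdot x_0 = y$ and $\bar g_{x_0} = e$, identify each $I_y$ with $I_{x_0}$ via $\bar g_y$, and let $g \in G$ send $I_y$ to $I_{gy}$ by transporting the action of $h_{g,y} := \bar g_{gy}^{-1}\, g\, \bar g_y$, which lies in $H$ since it fixes $x_0$. The verification that this defines a genuine $G$-action reduces to the easy identity $h_{g_1,\, g_2 y}\, h_{g_2, y} = h_{g_1 g_2, y}$ in $H$, which is immediate from the definitions.

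The marked point $q_0 \in I_{x_0} \subset \tilde S^1$ has trivial $G$-stabilizer: if $g \cdot q_0 = q_0$, then $g$ must preserve $I_{x_0}$ setwise (so $g \in H$), and then $r(g)(q_0) = q_0$ forces $g = e$ by freeness of $q_0$ under $r$. By Proposition \ref{prop:faithfulactioniffCO} applied to the resulting $G$-action on $\tilde S^1$ with marked point $q_0$, I obtain a left-invariant circular order $c_G$ on $G$. The compatibility then splits into two checks: the restriction $c_G|_H$ equals the orbit order of $H \cdot q_0 \subset I_{x_0}$, which is by construction the induced circular order from $<_H$; and the blow-down map $\tilde S^1 \to S^1$ collapsing each $I_y$ to $j(y)$ is a $G$-equivariant semi-conjugacy, so $\phi: G \to X$ respects orders up to the degeneracies forced by the fibers $gH$.

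The main obstacle will be the careful verification that the transported $G$-action on $\tilde S^1$ is well-defined (via the cocycle identity for $h_{g,y}$) together with the combinatorics of inserting blow-up intervals when $j(G \cdot x_0)$ is dense in $S^1$; both are routine adaptations of arguments used in the proofs of Proposition \ref{prop:faithfulactioniffCO} and Lemma \ref{lem:ses}.
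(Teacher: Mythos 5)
Your proof is correct, but it takes a genuinely different route from the paper's. The paper's argument is purely combinatorial: it first transports the chosen linear order on $\Stab_G(x_0)$ to each coset $g\Stab_G(x_0)$ (checking well-definedness via left-invariance), and then defines the cocycle $c$ on $G$ by a three-case recipe --- if the images $\phi(g_1),\phi(g_2),\phi(g_3)$ in $X$ are distinct, use the circular order on $X$; if two or three coincide, break the tie using the linear order on the common coset --- leaving the verification of the cocycle and invariance axioms to the reader (it is modeled on Lemma 2.2.12 of \cite{Calegari04}). You instead build an honest dynamical object: an embedding $j$ of the orbit into $S^1$, a Denjoy-type blow-up of each orbit point to an interval, the dynamical realization of the linear order installed on $I_{x_0}$ and transported to the other fibers via the cocycle $h_{g,y}=\bar g_{gy}^{-1}g\bar g_y$, and then an appeal to Proposition \ref{prop:faithfulactioniffCO} at the trivially stabilized point $q_0$. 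The identity $h_{g_1,g_2y}h_{g_2,y}=h_{g_1g_2,y}$ does hold and does make the transported action well defined, and your trivial-stabilizer argument is right (note $h_{g,x_0}=g$ since $\bar g_{x_0}=e$, so the stabilizer computation reduces exactly to freeness of $q_0$ under $r$). What your approach buys is that the cocycle condition for the resulting order is inherited for free from Proposition \ref{prop:faithfulactioniffCO}, and the construction matches the dynamical realization of the paper's combinatorial order, so the two proofs produce the same circular order; what it costs is the extra topological bookkeeping (summable interval lengths when $j(G\cdot x_0)$ is dense, continuity of the extension off the intervals), none of which is needed in the paper's order-theoretic version, which also applies verbatim with no topology on $X$ at all.
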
 
\begin{proof} 
First we pick a left-invariant linear order $\leq$ on $\Stab_G(x_0)$. We define a linear order on each coset $g \Stab_G(x_0)$ as follows. For any two elements $h, h'$ of $g \Stab_G(x_0)$, we say $h < h'$ if and only if $g^{-1}h < g^{-1}h'$ with respect to the linear order we chose on $\Stab_G(x_0)$. To see this is well-defined, assume $g \Stab_G(x_0) = g' \Stab_G(x_0)$. Then $g'^{-1}g \in \Stab_G(x_0)$. By the left-invariance, $g^{-1}h < g^{-1}h'$ if and only if $g'^{-1}g g^{-1}h < g'^{-1}g g^{-1}h'$, i.e., $g'^{-1}h < g'^{-1}h'$. Hence, our linear on a coset does not depend on the choice of a representative. 

Let's finally define a circular order $c$ on $G$. The recipe is almost exactly same as the one given in the proof of Lemma 2.2.12 of \cite{Calegari04}. 
%Define $\phi: G \to K$ by $\phi(g) = g \Stab_G(x_0)$. 
For each distinct triple $g_1, g_2, g_3$ of elements of $G$, we circularly order them as follows: 
\begin{itemize}
\item[(1)] If $\phi(g_1), \phi(g_2), \phi(g_3)$ are distinct, circularly order them by the circular order on their image in $X$.
\item[(2)] If $\phi(g_1) = \phi(g_2)$ but these are distinct from $\phi(g_3)$, then $g_1$ and $g_2$ belong to the same coset. If $g_1 < g_2$ with respect to the linear order we defined on the coset then $g_1, g_2, g_3$ is positively ordered, and otherwise it is negatively ordered. 
%$g_2^{-1}g_1 \in K$. If $g_2^{-1}g_1 < \Id$ then $g_1, g_2, g_3$ is positively ordered, otherwise it is negatively ordered. 
\item[(3)] If $\phi(g_1) = \phi(g_2) = \phi(g_3)$ then $g_1, g_2, g_3$ are all in the same coset. If $g_1 < g_2 < g_3$ (up to cyclic permutation) then they are negatively ordered. 
%$g_3^{-1}g_1, g_3^{-1}g_2, \Id$ are all in $K$, and therefore inherit a total order. The three corresponding elements of G in the same total order are negatively ordered.
\end{itemize} 
One can easily check that this defines a left-invariant circular order on $G$. 
\end{proof} 

Note that, in the above lemma, one can easily see that the set, say $K$, of all cosets of $\Stab_G(x_0)$ has a natural circular order so that $K$ is order-isomorphic to the orbit of $x_0$ under $G$. More precisely, we can define a circular order on the set $K$ of cosets of $\Stab_G(x_0)$ by using the circular order on $X$. We say $(g_1 \Stab_G(x_0), g_2 \Stab_G(x_0), g_3 \Stab_G(x_0))$ is positively oriented if and only if $(g_1x_0, g_2 x_0, g_3 x_0)$ is positively oriented with respect to the $G$-invariant circular order on $X$. Again, this is well-defined, since two representatives of each coset differ by an element of $\Stab_G(x_0)$ so the image of $x_0$ under two different representatives coincide. Hence, the lemma can be seen as a generalization of Lemma 2.2.12 of \cite{Calegari04}, in the sense that one can construct a circular order on a group $G$ from a linear order on a (not necessarily normal)  subgroup $H$, and a circular order on the set of cosets which is compatible with the left-action of $G$. This formulation is useful especially when one wants to circularly order a group from its action on the circle which may not have a trivially stabilized point, or even not be faithful. 

\begin{prop}
\label{prop:nonfaithfulaction}
Let $G$ be a group acting (not necessarily faithfully) on $S^1$, and $p$ a point in $S^1$. If $\Stab_G(p)$ is $\LO$, then $G$ is $\CO$. 
\end{prop}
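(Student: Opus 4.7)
The plan is to deduce this proposition as an essentially immediate corollary of Lemma \ref{lem:CO_Gset}. The key observation is that $S^1$ itself is the $G$-set we need.

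First, I would take $X = S^1$ equipped with its standard counterclockwise circular order. Assuming (as appears implicit throughout the paper) that the action of $G$ on $S^1$ is by orientation-preserving homeomorphisms, this circular order on $S^1$ is automatically $G$-invariant. So the pair $(X, p) = (S^1, p)$ satisfies the hypotheses of Lemma \ref{lem:CO_Gset}: $X$ is a $G$-set with a $G$-invariant circular order, and by assumption $\Stab_G(p)$ is $\LO$. Applying Lemma \ref{lem:CO_Gset} directly then yields that $G$ is $\CO$, which is exactly the desired conclusion.

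The main thing to double-check is the degenerate case where the $G$-orbit of $p$ is too small to carry circular-order data of its own (e.g., $|G\cdot p| \leq 2$). Taking $X = S^1$ rather than the orbit sidesteps this entirely, since $S^1$ is an infinite circularly ordered set, so the construction inside Lemma \ref{lem:CO_Gset} runs without issue; in the extreme case $\Stab_G(p) = G$, the recipe simply reproduces the linear-order-induced circular order on $G$ (the map $\LO(G) \hookrightarrow \CO(G)$ from \S\ref{ss:subspaces}), which is consistent.

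The only potential obstacle is the orientation question: if the action is allowed to reverse orientation, then the standard circular order on $S^1$ is not $G$-invariant. In that case I would pass to the orientation-preserving subgroup $G^+ \leq G$ of index at most two, apply the above argument to get a circular order on $G^+$, and then extend it to $G$ via Lemma \ref{lem:ses} using the short exact sequence $1 \to G^+ \to G \to \ZZ/2 \to 1$, noting that $\ZZ/2$ is (trivially) circularly orderable. Given the phrasing of the paper, however, I expect the intended reading assumes orientation-preservation, so the one-line application of Lemma \ref{lem:CO_Gset} should suffice.
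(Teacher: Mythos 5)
Your core argument is correct and is essentially the paper's own proof: the paper applies Lemma \ref{lem:CO_Gset} with $X$ taken to be the orbit of $p$ rather than all of $S^1$, but since the construction in that lemma only ever uses points of the form $gx_0$, the two choices are interchangeable and your version is, if anything, slightly cleaner about the small-orbit degeneracy. One warning about your final paragraph, though: the proposed fix for orientation-reversing actions does not work. Lemma \ref{lem:ses} requires the \emph{kernel} of the short exact sequence to be $\LO$, not merely $\CO$, so you cannot feed it $1 \to G^+ \to G \to \ZZ/2 \to 1$ with $G^+$ only circularly orderable. In fact the proposition is genuinely false without the orientation-preserving hypothesis: let $G = \ZZ/2 \times \ZZ/2$ act on $S^1$ with one factor acting by rotation by $\pi$ and the other by a reflection; then $\Stab_G(p)$ is trivial (hence $\LO$) for generic $p$, yet $G$ is a finite non-cyclic group and so is not circularly orderable by Lemma \ref{lem:torsionandCO}. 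So the orientation-preserving assumption, implicit throughout the paper, is essential here and cannot be removed by the index-two trick.
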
 
\begin{proof} The orbit of $p$ under the action of $G$ is a $G$-set with a circular order inherited from the circle. Obviously this order is $G$-invariant. The claim follows from an application of Lemma \ref{lem:CO_Gset} by setting $X$ to be the orbit of $p$ under the action of $G$, and $x_0 = p$. 
\end{proof}

For a set $A$ of elements of a group $G$, two circular orders $c$ and $c'$ are said to coincide on $A$ if they coincide on the set of triples consisting of elements in $A$. 

\begin{thm}
\label{thm:COminperfect}
Let $G, H$ be countable infinite groups. %such that $G \ast H$ is $\LO$. 
Suppose a circular order $c$ of $G \ast H$ admits a dynamical realization $r_c$ which restricts to a minimal action of $G$, \ie all orbits under the action of $G$ are dense in $S^1$. Then $c$ is not an isolated point of $\CO(G \ast H)$.
\end{thm}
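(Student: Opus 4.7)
The plan is to adapt Rivas's argument from \cite{Rivas12} for linear orderings on free products. To show that $c$ is not isolated in $\CO(G*H)$, it suffices to produce, for every finite subset $F \subset G*H$, a circular order $c_F \in \CO(G*H)$ with $c_F \neq c$ that nevertheless agrees with $c$ on all triples in $F^3$. I will build $c_F$ by a carefully chosen small perturbation of the dynamical realization $r_c$.

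First I would fix the marked trivially stabilized point $p \in S^1$ of $r_c$. Given a small parameter $\epsilon > 0$, I construct an orientation-preserving homeomorphism $\phi_\epsilon : S^1 \to S^1$ supported in a small interval $I_\epsilon$ chosen to be disjoint from the finite set $r_c(F) \cdot p$. I then define a perturbed homomorphism $r_\epsilon : G*H \to \Homeop(S^1)$ via the universal property of the free product by setting $r_\epsilon|_G = r_c|_G$ and $r_\epsilon|_H = \phi_\epsilon \cdot r_c|_H \cdot \phi_\epsilon^{-1}$. Because $\phi_\epsilon$ is supported away from $r_c(F) \cdot p$, the $r_\epsilon$-orbit of $p$ and the $r_c$-orbit of $p$ agree as circularly ordered subsets of $S^1$ when restricted to words in $F$.

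Next I would extract $c_F$ from $r_\epsilon$. The orbit $\mathcal{O}_\epsilon := r_\epsilon(G*H)(p)$ inherits a $G*H$-invariant circular order from the ambient $S^1$, and Proposition \ref{prop:nonfaithfulaction} promotes this to a circular order on $G*H$ provided the stabilizer $K_\epsilon := \Stab_{G*H}(p)$ with respect to $r_\epsilon$ is linearly orderable. By the Kurosh subgroup theorem, $K_\epsilon$ decomposes as a free product of a free group together with conjugates of subgroups of $G$ and of $H$; to make $K_\epsilon$ itself free (hence left-orderable), I would arrange that the non-free factors are all trivial. The intersection $K_\epsilon \cap G$ is trivial automatically since $r_\epsilon|_G = r_c|_G$ and $p$ is trivially stabilized by $r_c$. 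To kill the remaining conjugate subgroups in the Kurosh decomposition, I exploit minimality of $r_c|_G$: any nontrivial subgroup of $G$ that globally fixes a point of $S^1$ would yield an invariant proper closed subset, contradicting minimality, and moreover for each nontrivial element of $G$ or $H$, the set of points that it fails to fix is open and dense. One then chooses $\phi_\epsilon$ (together with its support $I_\epsilon$) so that all relevant base points in $\mathcal{O}_\epsilon$ avoid the countably many fixed-point sets of nontrivial elements of $G$ and of $H$; this is the ``very special perturbation'' alluded to in the discussion preceding the theorem.

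Finally I would verify that $c_F \neq c$ by choosing $\phi_\epsilon$ genuinely nontrivial, so that for some reduced word $w \in G*H$ alternating in letters of $G$ and $H$ we have $r_\epsilon(w)(p) \neq r_c(w)(p)$ in a manner that reverses the circular orientation of some triple involving $w$. The main obstacle is the freeness of $K_\epsilon$, since controlling the full Kurosh decomposition requires precise tracking of how subgroups of $G$ and $H$ can stabilize particular orbit points of the perturbed action; this is exactly where the minimality hypothesis on $r_c|_G$ is indispensable, and without it I would not expect a clean description of the perturbed stabilizer.
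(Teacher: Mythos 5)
Your high-level strategy matches the paper's: conjugate the $H$-factor of $r_c$ by a homeomorphism supported on a small interval, then use the Kurosh subgroup theorem together with Proposition \ref{prop:nonfaithfulaction} to turn the perturbed (possibly non-faithful) action into a new circular order. But the step where you establish freeness of the perturbed stabilizer $K_\epsilon$ has a genuine gap, and the way you propose to fill it would fail. First, the claim that minimality of $r_c|_G$ prevents nontrivial subgroups of $G$ from fixing points is false: the fixed-point set of a subgroup $K \leq G$ is $K$-invariant but not $G$-invariant, so minimality of the $G$-action is no obstruction (e.g.\ a Fuchsian group acts minimally on $S^1$ while its parabolic and hyperbolic elements all have fixed points). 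Second, the assertion that ``the set of points a nontrivial element fails to fix is open and dense'' need not hold for a dynamical realization, where elements can act as the identity on entire gap intervals. Third, and most seriously, your plan to choose $\phi_\epsilon$ so that all relevant base points avoid the countably many fixed-point sets is circular: the points you must control are of the form $\phi_\epsilon^{-1} r_\epsilon(w)^{-1}(p)$, which depend on $\phi_\epsilon$ itself, and the sets to be avoided are closed sets that may have nonempty interior. The missing idea is the paper's condition (iii): require the perturbing homeomorphism to map the \emph{original} orbit $r_c(G*H)(p)$ bijectively to itself (possible because any countable dense subset of an open interval is order-isomorphic to $\QQ$). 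Then every point $\phi_\epsilon^{-1} r_\epsilon(w)^{-1}(p)$ lies back in the original orbit, where trivial stabilization under $r_c$ is already known, and no appeal to fixed-point sets or to minimality is needed for this step.

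Two smaller issues. Requiring the support of $\phi_\epsilon$ to miss only $r_c(F)\cdot p$ does not guarantee that $c_F$ agrees with $c$ on $F$: since $r_\epsilon(g_1 h_1 \cdots g_n h_n) = g_1 \phi_\epsilon h_1 \phi_\epsilon^{-1} g_2 \cdots$, the support must also miss the orbit points of all rightmost subwords of elements of $F$, which is why the paper first enlarges the finite set $S$ by these subwords. And to conclude $c_F \neq c$ you must change the \emph{cyclic order} of orbit points, not merely move them; this is where minimality is actually used, namely to find a complementary interval of the finitely many points $r_c(S)(p)$ containing both a $G$-orbit point and an $H$-orbit point whose relative position the local perturbation can swap.
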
 

\begin{proof}
It is enough to show that for an arbitrary finite set $S$ of elements of $G\ast H$, there exists a circular order $c'$ on $G \ast H$ which coincides with $c$ on $S$ but such that $c' \neq c$.
For each $s \in S$, consider it as an alternating product of elements of G and elements of H, and add all rightmost sub-words of $s$ to $S$. For instance, if $s$ is $g_1h_1g_2h_2$, then we add $h_2, g_2h_2, h_1g_2h_2$ to $S$. We also add the group identity element e to $S$. If a circular $c'$ coincides with $c$ on this enlarged set $S$, then obviously they coincide on the original finite set of elements of $G \ast H$. 

 We shall construct $c'$ by conjugating the action of one of the factors in the free product by a homeomorphism of $S^1$ supported on a small interval, thereby modifying the interleaving pattern of $G$ and $H$. 
 
Let $r_c : G \ast H \to \Homeop(S^1)$ be the dynamical realization of $c$ with marked point $p \in S^1$. As usual, we consider $G \ast H$ as a subset of $S^1$ using $r_c$, \ie as the orbit of $p$.  Let $I \subset S^1$ be a connected component of the complement of the elements of $G\ast H$ involved in $S$, such that $I$ contains a point $g$ in the $G$-orbit of $p$ and a point $h$ in the $H$-orbit of $p$.  $I$, $g$ and $h$ exist because $H$ is infinite and the $G$ action is minimal by our assumption.  Choose an open connected arc $J$ which is properly contained in $I$ and contains both $g$ and $h$. Let $r_c^G : G \to \Homeop(S^1)$ be the restriction of $r_c$ to the factor $G$, and define $r_c^H$ similarly.
 
Let $\alpha$ be a homeomorphism of the circle satisfying the following properties:
\begin{itemize}
\item[(i)] $\alpha$ is the identity map on $S^1 \setminus J$.
\item[(ii)] the circular order of the triple $(e, g, h)$ is different from the circular order of $(e, g, \alpha(h))$, where $e$ is one of the endpoints of $J$, and
\item[(iii)] $\alpha$ induces a bijection on $G\ast H$ (considered as a subset of $S^1$ according to the unperturbed order). 
% the images of $H \subset S^1$ and $G \subset S^1$ under $\alpha$ and $\alpha^{-1}$ are disjoint.
\end{itemize}
Such $\alpha$ exists since any countable dense subset of an open interval is order-isomorphic to $\QQ$ with the usual order relation. 
Now perturb $r_c^H$ by conjugating with $\alpha$.  Namely, define $r_\alpha^H = \alpha r_c^H \alpha^{-1}$. Let $r_\alpha : G\ast H \to \Homeop(S^1)$ be the canonical homomorphism determined by $r_c^G$ and $r_\alpha^H$.  Note that $r_\alpha$ may not be injective. In particular, $K := \Stab_{r_\alpha(G \ast H)}(p)$ may not be trivial. We show that $K$ is a free group, hence left-orderable. 

By Kurosh's subgroup theorem, it is enough to show that $K$ trivially intersects the conjugates of G and H in $r_\alpha(G \ast H)$. %This is equivalent to the following: 
Suppose the negation. For instance, say $K$ intersects $r_\alpha(wHw^{-1})$ for some $w \in G \ast H$. That means, there exists $h$ such that $(r_\alpha(w))(\alpha h \alpha^{-1}) (r_\alpha(w)^{-1})$ fixes $p$ where $h, w$ are their images under the original action $r_c$. This is equivalent to that $h$ fixes $\alpha^{-1} r_\alpha(w)^{-1} (p)$. 

Suppose $w$ is written as $g_1h_1g_2h_2 \ldots g_nh_n$ as a reduced word. \\ Then $r_\alpha(w) = 
g_1 \alpha h_1 \alpha^{-1} g_2 \alpha h_2 \alpha^{-1} \ldots g_n \alpha h_n \alpha^{-1}$. Understanding $g_i$ and $h_i$ are their images under $r_c$, now we have that $h$ fixes 
$h_n^{-1} \alpha^{-1} g_n^{-1} \ldots g_1^{-1} (p)$. But since $\alpha$ maps the $r_c(G\ast H)$-orbit of $p$ to itself, this point $h_n^{-1} \alpha^{-1} g_n^{-1} \ldots g_1^{-1} (p)$ must be still in the the $r_c(G\ast H)$-orbit of $p$. But this is impossible, since $h$ does not fix any point in the orbit of $p$ in the original action (recall that $p$ is trivially stabilized by $r_c(G \ast H)$. This implies that $K$ does not interest any conjugate of $H$ in $r_\alpha(G \ast H)$. The exact same argument works if one replaces $H$ by $G$. Hence, $K$ must be free by Kurosh's theorem. Pick an arbitrary left-order of $K$. 

%($\star$) $\space \space$ For every $w \in r_c(G \ast H)$, 
%$\Stab_{\Homeop(S^1)}[(\alpha^{-1}\circ w)(p)] \cap (r_c(G \cup H)) = \emptyset$.  

%But $(\star)$ follows from the property (iii) of $\alpha$ because $p$ is trivially stabilized by $G\ast H$ in the original action.

%Furthermore, the set of cosets of $K$ in $G \ast H$, call it $X$, has a natural circular order inherited from the orbit of $p$ under the action of $r_\alpha(G\ast H)$. $G\ast H$ has a left-action on $X$ which preserves this order, ie., $X$ admits a left-invariant circular order as a $G\ast H$-set. This is enough to construct a circular order on $G \ast H$ so that the natural maps $K \to G\ast H$ and $G \ast H \to X$ are compatible with the given orders. 

Now applying Proposition \ref{prop:nonfaithfulaction} (or rather the proof of Lemma \ref{lem:CO_Gset}) by setting $X$ to be the orbit of $p$ under $r_\alpha(G\ast H)$, and $x_0 = p$, we obtain a circular order on $G \ast H$ from the perturbed action $r_\alpha$, call this new circular order $c'$. 

The circular order of the orbit of $p$ under $r_\alpha$ may be different from one for $r_c$, but the enlarging process for $S$ given at the beginning of the proof ensures that at least the circular order on the set $S$ has not been changed. More precisely, the sets $\{ r_\alpha(s)(p) : s \in S \}$ and $\{ r_c(s)(p) : s \in S \}$ are order-isomorphic. Therefore, $c$ coincides with $c'$ on $S$. But the property (ii) of $\alpha$ ensures that $c \neq c'$, which completes the proof. 
\end{proof} 

One may wonder if the assumption in Theorem \ref{thm:COminperfect} is vacuous. In fact, it is easy to produce an example in this situation. For instance, let $r$ be a rigid rotation by an irrational angle. Then a minor variation of the proof of Proposition 4.5 of \cite{Ghys01} shows that for a generic choice of a loxodromic isometry $f$ of $H^2$, the group generated by $r$ and $f$ is free. The set $X_w$ in Ghys' proof can be replaced by the set $X_w = \{ k \in \RR :  w(r,f_k) \}$ where $w$ is a non-trivial word in $F_2$ and $f_k$ denotes the map $z \to k z$ in the upper half plane. One can easily show that $X_w$ is a proper subset of $\RR$, and then a similar argument goes through to show that for a generic choice of $k$, the group generated by $r$ and $f_k$ is free. In fact, by choosing a finite number of such loxodromic isometries which do not share fixed points and possibly raising their powers, one can produce non-isolated circular orderings of free groups of finite rank.  

It is not clear if one could generalize Theorem \ref{thm:COminperfect} to all of $\CO(G\ast H)$.  Indeed, the existence of $I, g$ and $h$ are needed for the local perturbation argument, and these need not exist without the assumptions of minimality of the $G$ action\footnote{Minimality of $G$ could be relaxed to the condition that the $G$-orbit of the marked point be dense.} and infiniteness of $H$.  For example, in contrast to Rivas' case of linear orders, $G$ and $H$ could be finite cyclic groups and $S$ could involve all the elements of $G$ and $H$.  In this case any perturbation would have to occur on triples involving words of $G*H$ that alternate many times between $G$ and $H$, in which case the universal property of free products can not be so easily exploited.  Nevertheless, we propose the following

\begin{conj}
If $G$ and $H$ are circularly orderable groups, then $\CO(G*H)$ either has no isolated points or is finite.  In particular, $\CO(G*H)$ is a Cantor set if it is infinite.
\end{conj}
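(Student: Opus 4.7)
The plan is to extend Theorem \ref{thm:COminperfect} by a case analysis on the dynamical behaviour of each factor. Since $\CO(G*H)$ is closed in the Cantor set $\{-1,0,1\}^{(G*H)^3}$ and hence compact and totally disconnected, it suffices to show that, whenever infinite, $\CO(G*H)$ has no isolated points. Fix $c \in \CO(G*H)$ and a finite set $S \subset (G*H)^3$; the goal is to produce $c' \neq c$ in $\CO(G*H)$ agreeing with $c$ on $S$. I would first dispose of the degenerate cases in which one of $G$, $H$ is trivial or finite (these reduce to statements about $\CO$ of a single group, plus a rotation factor, cf.\ Theorem \ref{thm:abelianCO}). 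For the main case, in which both factors are infinite, Proposition \ref{prop:Poincare} applied to the restrictions $r_c|_G$ and $r_c|_H$ of the dynamical realization produces a trichotomy for each factor (finite orbit, minimal, or exceptional-minimal), giving six subcases up to the obvious symmetry in $G$ and $H$.

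If either $r_c|_G$ or $r_c|_H$ is minimal, then Theorem \ref{thm:COminperfect} (with the roles of $G$ and $H$ swapped if needed) produces the desired $c'$. In the remaining subcases neither restriction is minimal. The tactic here is to locate a blowdown kernel $K_G \leq G$ stabilising either a finite orbit or an exceptional minimal set $C_G$ of $r_c|_G$, extract the induced linear order of $c|_{K_G}$ from the action on any gap interval of $C_G$ (cf.\ the proof of Theorem \ref{thm:freeabelianCO}), perturb it by a Rivas-style perturbation of $\LO(K_G)$ supported on a single gap, and reassemble the global action via Proposition \ref{prop:nonfaithfulaction}. By the Kurosh subgroup theorem, $K_G$ is a free product of a free group with conjugates of subgroups of $G$ and $H$; provided the free factor is nontrivial, Rivas's theorem \cite{Rivas12} guarantees $\LO(K_G)$ has no isolated points, so a suitable local perturbation exists. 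If $K_G$ happens to be trivial the same argument is run for $K_H$; if both are trivial, the dynamics forces $c$ to be a rotation-type order on an abelian quotient, and density of rotation orders (Theorem \ref{thm:rotationordersdense}) can be used instead.

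The main obstacle, and the one the authors flag explicitly, is the lexicographical orders of Theorem \ref{thm:freeproducts}. These are uniquely determined by the input pair $(c_G, c_H)$, so any perturbation of $c$ must arise from a perturbation of $c_G$ or $c_H$ on a finite set of triples of the respective factor. The natural approach is Noetherian induction on a suitable poset of circularly orderable groups, under the inductive hypothesis that $\CO(G)$ and $\CO(H)$ are each either finite or perfect; then a perturbation $c_G' \neq c_G$ agreeing with $c_G$ on the (finite) $G$-part of $S$ extends via Theorem \ref{thm:freeproducts} to $c' \neq c$ agreeing with $c$ on $S$. The base cases of finitely generated abelian and free groups are supplied by Theorem \ref{thm:cantorset} and \cite{Rivas12} respectively. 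The hardest step, which is precisely why the statement is only a conjecture in the paper, is to set up such an induction in a well-founded way over all circularly orderable groups that arise as factors, and to verify that two distinct input pairs truly yield two globally distinct lexicographical orders on all of $G*H$, rather than merely differing on a single triple outside $S$.
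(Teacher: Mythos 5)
The statement you are trying to prove is stated in the paper only as a conjecture; the authors give no proof, and your proposal does not supply one. The parts of your plan that work are exactly the parts already covered by Theorem \ref{thm:COminperfect} (both factors infinite and one restriction minimal), and the parts that go beyond it contain genuine gaps. Most seriously, your dismissal of the ``degenerate'' case where a factor is finite is wrong: a free product of two finite cyclic groups, such as $\ZZ/2 * \ZZ/3 \cong \mathrm{PSL}_2(\ZZ)$, is an infinite non-abelian group containing nonabelian free subgroups, and its circular orders do not reduce to ``$\CO$ of a single group plus a rotation factor'' or to Theorem \ref{thm:abelianCO}. This is precisely the case the authors single out as the obstruction: when $G$ and $H$ are finite and $S$ already involves all of $G\cup H$, any perturbation must act on triples of long alternating words, and the local-perturbation/universal-property machinery of Theorem \ref{thm:COminperfect} does not apply.

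The remaining steps also do not close. In the non-minimal subcases, the blowdown-kernel analysis of Section 3 is developed only for abelian groups; for $G*H$ the stabilizer of an exceptional minimal set need not interact cleanly with the free product structure, you have not shown that a Rivas-style perturbation of a linear order on $K_G$ supported in one gap reassembles to a \emph{left-invariant} circular order on all of $G*H$ agreeing with $c$ on $S$, and your claim that trivial kernels force a rotation order on an abelian quotient contradicts Theorem \ref{thm:Tits}, since $G*H$ typically contains a nonabelian free subgroup. Finally, the proposed Noetherian induction ``over all circularly orderable groups that arise as factors'' has no well-founded poset behind it, and even if it did, perturbing $(c_G,c_H)$ and re-extending lexicographically only produces nearby \emph{lexicographical} orders; a general $c\in\CO(G*H)$ need not be lexicographical, and the paper explicitly leaves open whether the lexicographical orders of Theorem \ref{thm:freeproducts} are isolated. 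You have reorganized the difficulty rather than resolved it.
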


%\begin{cor} 
%Let $G, H$ be countable, infinite groups, and $c$ a circular order on $G \ast H$. Then $c$ is not an isolated point in $\CO(G \ast H)$ if one of the followings holds. 
%\begin{itemize} 
%\item[(1)] $c$ is linear, 
%\item[(2)] $c$ restricts as a minimal order to a proper free factor of $G \ast H$, %and $G \ast H$ is left-orderable,  
%\item[(3)] $c$ is Archimedean as a circular order. 
%\end{itemize} 
%\end{cor}
%\begin{proof}
%(1) is a well-known fact for $\LO(G \ast H)$ (see, for instance, \cite{Navas10}). (2) follows from Theorem \ref{thm:COminperfect} and (3) follows from Theorem \ref{thm:Archimedean}. 
%\end{proof} 

%%%%%%%%%%%%%
%%%%%%%%%%%%%
\bibliographystyle{alpha}
\bibliography{CObib}

\end{document}